\DeclareMathOperator\supp{supp}
\newtheorem{Thm}{Theorem}[section]
\newtheorem{Prop}[Thm]{Proposition}
\newtheorem{Lemma}[Thm]{Lemma}
{\theoremstyle{definition} 
\newtheorem{Def}[Thm]{Definition}}
{\theoremstyle{remark}
\newtheorem{Rmk}[Thm]{Remark}}
\newtheorem{Cor}[Thm]{Corollary}
\numberwithin{equation}{section}
\title{Categorical Chain Conditions for Etale Groupoid Algebras}
\author{Sunil Philip}
\date{\today}
\begin{document}

\maketitle

\section{Introduction}
In the early 1960s, William G. Leavitt introduced Leavitt algebras as examples of rings that violate the invariant basis number property and are universal for the property that $R^m \cong R^{m+k}$ with $k$ minimal for any $k, m \in \mathbb{N}_{>0}$.  Additionally, Leavitt algebras are also finitely presented and simple when $m=1$. \cite{ LeavittModTypeRing, LeavittModTypeHomImg, Abramssurvey}.
\newline
In the late 1970s, in a seemingly unrelated development, Cuntz \cite{cuntz}, unaware of Leavitt's work,  defined universal $C^{\ast}$-algebras $\mathcal{O}_n$ having the same presentation as the Leavitt algebra when $m=1$ and $k=n-1$.  Cuntz algebras were examples of separable $C^{\ast}$-algebras that are simple and  purely infinite.  By 1980, Cuntz and Krieger, motivated by explorations in symbolic dynamics, generalized Cuntz algebras to a class of $C^{\ast}$-algebras associated to binary-valued matrices (equivalently simple directed graphs) \cite{cuntzkrieger}.  Around this same time, in groundbreaking work, Renault \cite{Renault, RenaultThesis} worked out the basic theory of groupoid $C^{\ast}$-algebras.  By the late 1990s, Kumjian, Pask, Tomforde, Raeburn, and Renault \cite{KumjianPaskRaeburnRenaultGraphsGrpdsCalg} defined a Hausdorff ample groupoid from certain classes of directed graphs, which could then be used to construct $C^{\ast}$-algebras.  These graph groupoid/graph $C^{\ast}$-algebras universally satisfy the Cuntz-Krieger relations \cite{cuntzkrieger}.   The original Cuntz algebras $\mathcal{O}_n$ \cite{Renault}, Cuntz-Krieger algebras, and graph $C^{\ast}$-algebras, can be interpreted as groupoid $C^{\ast}$-algebras and were, therefore, open to groupoid techniques in addition to more traditional methods of studying these $C^{\ast}$-algebras.  In fact, many $C^{\ast}$-algebraic properties of graph $C^{\ast}$-algebras can be characterized by the structural properties of the underlying graph.  Also in \cite{KumjPaskHighRnkGrph} a higher dimensional graph called a $k$-graph was developed with an associated $C^{\ast}$-algebra.  
\newline
In the early 2000s, Leavitt algebras based on paths in a graph were defined and dubbed Leavitt path algebras \cite{AraGoodearlPardoK0, NstableKGrphAlg}.  Since the defining relations of the Leavitt path algebras are the same as those of the graph $C^{\ast}$-algebras, it would be fair to assume there would be some connection between the purely algebraic Leavitt path algebras and their analytic $C^{\ast}$-algebra cousins.  It has been shown that when the coefficient ring of the Leavitt path algebra is the field $\mathbb{C}$, the corresponding $C^{\ast}$-algebra is the universal norm completion of the Leavitt path algebra.  In fact, when the graph is finite, the graph $C^{\ast}$-algebra is purely infinite simple if and only if the corresponding Leavitt path algebra is purely infinite simple.  For finite graphs, the graph $C^{\ast}$-algebra has the same $K_0$-group as the corresponding Leavitt path algebra \cite{AraGoodearlPardoK0, groupoidapproachleavitt,LeavittBook}.
\newline
In \cite{SteinbergGroupoidAlgebra}, Steinberg defined a groupoid $R$-algebra as a convolution algebra of the $R$-module generated by characteristic functions of certain types of compact open subsets of an ample groupoid, where $R$ is any unital commutative ring.  Ample groupoids are a special type of etale groupoid.  Working independently, Clark, Farthing, Sims, and Tomforde \cite{ operatorguys1} developed the same groupoid algebra for the specific case where $R=\mathbb{C}$ and the groupoid is Hausdorff.  Many in the research community refer to these algebras as Steinberg algebras and etale groupoid algebras.  We will follow the latter convention.  The etale groupoid algebras include many important classes of rings such as:
\begin{enumerate}
    \item group algebras,
    \item commutative algebras over a field generated by idempotents, 
    \item crossed products of rings described in (1) and (2),
    \item Leavitt path algebras,
    \item higher rank graph algebras, and
    \item inverse semigroup algebras
\end{enumerate}
Steinberg \cite{gpdchain} characterized the classical chain conditions for etale groupoid algebras.  He showed that if the classical chain conditions hold, then the etale groupoid algebra is unital and the groupoid has finitely many objects.  So the classical chain conditions are quite restrictive for etale groupoid algebras.  In this paper, we will explore generalized (categorical and local) chain conditions for non-unital etale groupoid algebras.  In order to get out of the unital case we are forced into looking at these generalized chain conditions, because a ring with local units can only satisfy the classical chain conditions when the ring is unital.  We will show that these chain conditions are equivalent to the topological property of discreteness of the underlying groupoid, classical chain conditions on the group rings of the isotropy groups, and a matrix algebra decomposition based on the orbits of the groupoid. Our second major result is an analogous result characterizing semisimplicity of etale groupoid algebras. 
\newline 
Our main results, Theorems \ref{thm:1001} and \ref{thm:1012}, recover \cite[Theorems 4.2.7 and 4.2.12]{LeavittBook} which are the definitive version of earlier results of \cite[Theorems 2.4 and 3.7]{AbramsetalchaincondLPA}.  Additionally, our analogs of these results hold when the coefficient ring is an arbitrary unital commutative ring, not just a field.
\section{Groupoid Algebras}
\subsection{Topological, etale, and ample groupoids}
A groupoid $\mathscr{G}$ is a small category where every arrow is an isomorphism.  We view $\mathscr{G}$ as a set with a partially defined multiplication.  We identify objects with appropriate units and call the set of units $\mathscr{G}^{(0)}$.   A topological groupoid $\mathscr{G}$ is a groupoid with a topology making multiplication and inversion maps continuous.   In the setting of a topological groupoid, we endow $\mathscr{G}^{(0)}$ with the subspace topology.   Groupoids are endowed with basic structure maps 
$$
d,r: \mathscr{G} \longrightarrow \mathscr{G}^{(0)}
$$
$$
g \xmapsto{\ \ \ d \ \ \ } g^{-1}g
$$
$$
g \xmapsto{\ \ \ r \ \ \ } gg^{-1}
$$
In this paper we assume that our topological groupoids are \textit{locally compact}.  In our setting, a \textit{locally compact groupoid}, is a groupoid that is \textit{locally compact} and whose unit space is locally compact Hausdorff in the induced topology.  Etale groupoids are topological groupoids where the domain $d:\mathscr{G} \rightarrow \mathscr{G}^{(0)}$ and range $r:\mathscr{G} \rightarrow \mathscr{G}^{(0)}$ are local homeomorphisms.  An \textit{open bisection} is an open subset $U$ of $\mathscr{G}$ such that $d {\restriction}_U: U \rightarrow d (U)$ and $r {\restriction}_U: U \rightarrow r (U)$  are injective, and therefore  homeomorphisms, onto open subsets of $\mathscr{G}^{0)}$.  Hence open bisections are always Hausdorff.  Ample groupoids are etale groupoids with a Hausdorff unit space and a basis of compact open bisections, equivalently $\mathscr{G}^{(0)}$ has a basis of compact open sets.  Note that a discrete groupoid is always ample.
\begin{Rmk} \label{rmk:0007}
For etale groupoids, since $d, r :\mathscr{G} \rightarrow \mathscr{G}^{(0)}$ are local homeomorphisms, $\mathscr{G}$ is discrete if and only if $\mathscr{G}^{(0)}$ is discrete. 
\end{Rmk}
For any set $U \subseteq \mathscr{G}^{(0)}$, $\mathscr{G}_U = d^{-1}(U)$, $\mathscr{G}^U = r^{-1}(U)$, and $\mathscr{G}{\restriction}_U = \mathscr{G}^U_U = d^{-1}(U) \cap r^{-1}(U)$.  We say that  $U$ is an \textit{invariant subset} of $\mathscr{G}^{(0)}$ if $\mathscr{G}_U = \mathscr{G}{\restriction}_U$.
\begin{Def}[Isotropy Group and Orbit] \label{def:0003} If $x \in \mathscr{G}^{(0)}$, then $\mathscr{G}_x^x = \{ g \in \mathscr{G} : d(g)=x =r(g) \}$ is the \textit{isotropy group} at $x$.  The \textit{orbit} $\mathcal{O}_x$ of $x \in \mathscr{G}^{(0)}=r(\mathscr{G}_x)$ is the set of all $y \in \mathcal{G}^{(0)}$ such that there exists an arrow $g:x \rightarrow y$.  
\end{Def}
\begin{Rmk} \label{rmk:0001}
The isotropy groups of elements in the same orbit are isomorphic and $\mathcal{O}_x$ is the smallest invariant subset containing $x$.  Let $\mathscr{G}$ be a groupoid, $U$ is an invariant subset of $\mathscr{G}^{(0)}$ if and only if $U$ is a union of orbits.
\end{Rmk}
\subsection{Etale Groupoid Algebras}
Let $\mathscr{G}$ be an ample groupoid  and $R$ a commutative ring with unit.  We define $R \mathscr{G}$ to be the $R$-submodule of $R^{\mathscr{G}}$ spanned by the characteristic functions $\chi_U$ where $U$ is a compact open bisection.  Then this $R$-module becomes an $R$-algebra by defining a convolution product on $R\mathscr{G}$.  
For all $f_1, f_2 \in R\mathscr{G}$, the convolution product is defined as 
\begin{equation} 
    f_1 \ast f_2 = \sum_{d(h)=d(g)} f_1(gh^{-1}) f_2(h).
  \end{equation}
Groupoid algebras have an involution, given by $f^{\ast}(g) = f(g^{-1})$ for all $f \in R \mathscr{G}$ and $g \in \mathscr{G}$.  Further details on the construction of etale groupoid algebras can be found in \cite{SteinbergGroupoidAlgebra}.  Since groups are single object groupoids, group rings are a special case of groupoid algebras.
\begin{Prop} [Proposition 2.1 of \cite{groupoidbundles}] \label{prop:1004}
Suppose $\mathcal{G}$ is an ample groupoid and $R$ a commutative ring with unit. Let $\mathcal{B}$ denote the generalized boolean algebra of compact open subsets of $\mathcal{G}^{(0)}$.
\begin{enumerate}
    \item $\mathcal{B}$ is directed.
    \item If $U \in \mathcal{B}$, then $\mathcal{G}{\restriction}_U$ is an open ample subgroupoid of $\mathcal{G}$.
    \item \label{prop:1004_3} If $U \in \mathcal{B}$, then $\chi_U R \mathcal{G} \chi_U \cong R \mathcal{G}{\restriction}_U$
    \item \label{prop:1004_4} $R \mathcal{G} = \bigcup_{U \in \mathcal{B}} \chi_U R \mathcal{G} \chi_U = \underset{ U \in \mathcal{B}}{\varinjlim} R \mathcal{G}{\restriction}_U$
\end{enumerate}
\end{Prop}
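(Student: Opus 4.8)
The plan is to treat the four assertions essentially independently: (1) and (2) are purely topological, (3) carries the algebraic content, and (4) then follows formally. For (1), I would use that $\mathcal{G}^{(0)}$ is locally compact Hausdorff, so compact subsets are closed; then for $U,V\in\mathcal{B}$ the sets $U\cup V$, $U\cap V$, and $U\setminus V$ are again compact and open, which both exhibits $\mathcal{B}$ as a generalized Boolean algebra and shows it is directed upward by inclusion, with $U\cup V$ an upper bound of $U$ and $V$.

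For (2), since $d,r$ are continuous, $\mathcal{G}{\restriction}_U=d^{-1}(U)\cap r^{-1}(U)$ is open; it is a wide subgroupoid on the object set $U$ because composability in $\mathcal{G}{\restriction}_U$ forces $d(gh)=d(h)\in U$ and $r(gh)=r(g)\in U$, and $d(g^{-1})=r(g)\in U$, $r(g^{-1})=d(g)\in U$, while its unit space is exactly $U$, which is Hausdorff in the subspace topology. Restrictions of the local homeomorphisms $d,r$ to the open set $\mathcal{G}{\restriction}_U$ remain local homeomorphisms, so $\mathcal{G}{\restriction}_U$ is \'etale; and for ampleness I would exhibit $\{B\cap\mathcal{G}{\restriction}_U\}$, as $B$ ranges over compact open bisections of $\mathcal{G}$, as a basis of compact open bisections, using that $B\cap\mathcal{G}{\restriction}_U=(d{\restriction}_B)^{-1}\big(d(B)\cap U\big)\cap(r{\restriction}_B)^{-1}\big(r(B)\cap U\big)$ is compact open (because $d{\restriction}_B,r{\restriction}_B$ are homeomorphisms onto open subsets of $\mathcal{G}^{(0)}$ and $d(B)\cap U,\,r(B)\cap U\in\mathcal{B}$) and a bisection.

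For (3) the first step is a short computation with characteristic functions: $\chi_U$ is idempotent in $R\mathcal{G}$, and for any $f\in R\mathcal{G}$ one gets $(\chi_U * f)(g)=\chi_U(r(g))f(g)$ and $(f*\chi_U)(g)=\chi_U(d(g))f(g)$, so $\chi_U * f * \chi_U$ is simply $f$ restricted to $\mathcal{G}{\restriction}_U$; in particular $\chi_U * \chi_B * \chi_U=\chi_{B\cap\mathcal{G}{\restriction}_U}$ for a compact open bisection $B$. Hence $\chi_U R\mathcal{G}\chi_U$ is the $R$-span of the $\chi_{B\cap\mathcal{G}{\restriction}_U}$, which by (2) lies in $R\mathcal{G}{\restriction}_U$; conversely, since $\mathcal{G}{\restriction}_U$ is open, every compact open bisection of $\mathcal{G}{\restriction}_U$ is one of $\mathcal{G}$, so the restriction map $\rho_U\colon\chi_U R\mathcal{G}\chi_U\to R\mathcal{G}{\restriction}_U$ is a bijection of $R$-modules. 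The remaining point — the one I expect to need genuine care — is that $\rho_U$ is multiplicative: for $f_1,f_2$ supported on $\mathcal{G}{\restriction}_U$, each factorization $g=g_1g_2$ in $\mathcal{G}$ contributing to $(f_1*f_2)(g)$ has $g_1,g_2\in\mathcal{G}{\restriction}_U$ and therefore $g\in\mathcal{G}{\restriction}_U$, while conversely the factorizations of such a $g$ inside $\mathcal{G}{\restriction}_U$ are exactly these, so the two convolution sums coincide and $f_1*f_2$ vanishes off $\mathcal{G}{\restriction}_U$. This stability of $\mathcal{G}{\restriction}_U$ under passing to factors is also what makes $\chi_U$ the identity of the corner, so it is worth isolating up front.

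Finally, for (4) the inclusion $\chi_U R\mathcal{G}\chi_U\subseteq R\mathcal{G}$ is clear, and for the reverse I would write $f\in R\mathcal{G}$ as a finite sum $f=\sum_i r_i\chi_{B_i}$ with $B_i$ compact open bisections and put $U=\bigcup_i\big(d(B_i)\cup r(B_i)\big)\in\mathcal{B}$, so each $B_i\subseteq\mathcal{G}{\restriction}_U$ and $f=\chi_U * f * \chi_U$. By (1) the poset $\mathcal{B}$ is directed, and for $U\subseteq V$ in $\mathcal{B}$ one has $\chi_V*\chi_U=\chi_U=\chi_U*\chi_V$, hence $\chi_U R\mathcal{G}\chi_U\subseteq\chi_V R\mathcal{G}\chi_V$; thus $R\mathcal{G}$ is the directed union, i.e.\ the colimit, of these corners. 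Since the isomorphisms $\rho_U$ of (3) intertwine these inclusions with the extension-by-zero maps $R\mathcal{G}{\restriction}_U\hookrightarrow R\mathcal{G}{\restriction}_V$, transporting along them yields $R\mathcal{G}=\varinjlim_{U\in\mathcal{B}}R\mathcal{G}{\restriction}_U$. Apart from the multiplicativity check in (3), every step is just continuity of $d,r$ together with elementary manipulation of characteristic functions.
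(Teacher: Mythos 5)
Your proposal is correct, but there is nothing in the paper to compare it against: the paper imports this statement as Proposition 2.1 of the cited reference and gives no proof of its own. Your argument is the standard one and all the steps check out — the pointwise identities $(\chi_U * f)(g)=\chi_U(r(g))f(g)$ and $(f*\chi_U)(g)=f(g)\chi_U(d(g))$ do identify the corner with the functions supported on $\mathcal{G}{\restriction}_U$, the closure of $\mathcal{G}{\restriction}_U$ under taking factors of a product is exactly what makes the restriction map multiplicative, and your formula $B\cap\mathcal{G}{\restriction}_U=(d{\restriction}_B)^{-1}\bigl(d(B)\cap U\bigr)\cap(r{\restriction}_B)^{-1}\bigl(r(B)\cap U\bigr)$ is the right way to get compactness, since it avoids assuming $\mathcal{G}{\restriction}_U$ is closed (which can fail when $\mathcal{G}$ is non-Hausdorff) by working inside the Hausdorff bisection $B$. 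Two small points: in (2) the subgroupoid $\mathcal{G}{\restriction}_U$ is the \emph{full} subgroupoid on the object set $U$, not a \emph{wide} one (it does not contain all units of $\mathcal{G}$); and in (4) you should note explicitly that $d(B_i)$ and $r(B_i)$ are open as well as compact because $d$ and $r$ are local homeomorphisms and hence open maps — you use this when asserting $U\in\mathcal{B}$. Neither affects the validity of the argument.
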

We will need the following lemma, which is a well-known result about Boolean algebras and can be found in \cite{gpdchain}
\begin{Lemma} [Lemma 3 of \cite{gpdchain}] \label{thm:0001} Let $X$ be a Hausdorff space with a basis of compact open sets.  Then $X$ satisfies the ascending chain condition on compact open subsets if and only if $X$ is finite and hence discrete.
\end{Lemma}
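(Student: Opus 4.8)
The plan is to handle the two directions separately, the backward one being trivial and the forward one carrying all the content. For the backward direction, if $X$ is finite then, being Hausdorff and hence $T_1$, every singleton is closed, so every subset is closed and therefore also open; thus $X$ is discrete, and having only finitely many subsets it certainly has no infinite strictly ascending chain of compact open sets. This also justifies the ``and hence discrete'' clause. For the forward direction I would assume $X$ satisfies the ascending chain condition on compact open subsets and argue that $X$ must be finite. The first --- and I think crucial --- step is to show that $X$ is itself compact. If it were not, I would build a strictly ascending chain of compact open sets recursively: start from any compact open $U_0$ (the compact opens cover $X$ since they form a basis), and as long as $U_n \neq X$ choose $x \in X \setminus U_n$ together with a compact open neighbourhood $V$ of $x$ and set $U_{n+1} = U_n \cup V$, which is compact, open, and properly larger than $U_n$. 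Under the hypothesis this recursion must stop, and it can only stop at $U_N = X$, so $X$ is compact.

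Once $X$ is known to be compact and Hausdorff, every compact subset is closed, so ``compact open'' coincides with ``clopen'' and complements of compact open sets are compact open. Now I would suppose for contradiction that $X$ is infinite. Since an infinite discrete space is not compact, $X$ has a non-isolated point $p$, and every open neighbourhood of $p$ is infinite (if an open $U \ni p$ were finite, then $U \setminus \{p\}$ is closed and $U \cap (X \setminus (U \setminus \{p\})) = \{p\}$ is open, contradicting non-isolation). I would then construct a strictly descending chain of clopen sets $X = D_0 \supsetneq D_1 \supsetneq D_2 \supsetneq \cdots$, each containing $p$: given the clopen (hence infinite) neighbourhood $D_n$ of $p$, pick $q \in D_n \setminus \{p\}$, use Hausdorffness and the clopen basis to get a clopen $C$ with $p \in C$ and $q \notin C$, and set $D_{n+1} = D_n \cap C$. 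Taking complements, $E_n := X \setminus D_n$ gives an infinite strictly ascending chain of compact open sets, contradicting the hypothesis; hence $X$ is finite, and by the backward direction it is discrete.

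The main obstacle, such as it is, is really just the first reduction: seeing that the ascending chain condition already forces compactness of $X$, after which everything is the standard fact that a non-isolated point of an infinite zero-dimensional compact Hausdorff space has an infinite nested family of clopen neighbourhoods, whose complements form the desired chain. An alternative packaging, matching the ``Boolean algebra'' remark, is to note that after reducing to $X$ compact its clopen subsets form a Boolean algebra on which the hypothesis is exactly the ascending chain condition, and a Boolean algebra with the ascending chain condition is finite, with atoms the singletons of $X$. The recursive constructions use only dependent choice and raise no difficulty.
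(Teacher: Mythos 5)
Your proposal is correct and follows essentially the same route as the paper's proof: first deduce compactness of $X$ from the ascending chain condition, then use the fact that in a compact Hausdorff space compact open sets are closed under complementation to pass between ascending and descending chains, and finally use the Hausdorff property together with the compact open basis to separate points. The only cosmetic difference is that the paper phrases the last step via minimal compact open neighbourhoods $K_x$ (obtained from the descending chain condition) and shows $K_x=\{x\}$, whereas you argue by contradiction from a non-isolated point and an explicitly constructed descending chain; these are the same idea.
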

\begin{proof}
($\Leftarrow$)  If $X$ is finite, the result is obvious.
\newline
($\Rightarrow$)  Assume $X$ satisfies the ascending chain condition on compact open subsets.  Then $X$ must be compact for $X$ must have a maximal compact open subset $K$.  If $K \ne X$ and $x \in X \setminus K$, then there exists a compact open neighborhood $U$ of $x$ and $K \cup U \supsetneq K$. But $K \cup U$ is a compact open subset of $X$ strictly larger than $K$, so $X = K$.  We can appeal to the fact that $X$ is compact Hausdorff to see that compact open subsets of $X$ are closed under complement; therefore, $X$ also satisfies the descending chain condition on compact open subsets.  So each $x \in X$ is contained in a minimal compact open set $K_x$.  If $K_x \ne \{ x \}$, then since $X$ is Hausdorff with a basis of compact open subsets, for any $y \in K_x \setminus \{ x \}$, there is a compact open subset $V$ with $x \in V \subseteq K_x$ and $y \notin V$.  Since $x \in V \subsetneq K_x$, this contradicts $K_x$ being minimal.  So $K_x = \{ x \}$ which gives us that $X$ is discrete and compact and therefore finite.
\end{proof}
\begin{Prop} \label{thm:0009}
If $V, W \subseteq \mathscr{G}^{(0)}$, then $V \subseteq W$ if and only if $\chi_V \ast \chi_W = \chi_V$.
\end{Prop}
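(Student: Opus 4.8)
The plan is to establish, for all compact open $V, W \subseteq \mathscr{G}^{(0)}$, the pointwise identity $\chi_V \ast \chi_W = \chi_{V \cap W}$, and then to read off the proposition from the observation that a subset of $\mathscr{G}$ is determined by its characteristic function in $R^{\mathscr{G}}$ (here using $1_R \neq 0_R$ in $R$).

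To prove the identity I would evaluate $(\chi_V \ast \chi_W)(g) = \sum_{d(h) = d(g)} \chi_V(g h^{-1}) \chi_W(h)$ at an arbitrary $g \in \mathscr{G}$. A term can be nonzero only if $h \in W$; since $W \subseteq \mathscr{G}^{(0)}$, such an $h$ is a unit, hence satisfies $h = h^{-1} = d(h) = r(h)$, and the constraint $d(h) = d(g)$ then forces $h = d(g)$. Thus the sum is nonempty exactly when $d(g) \in W$, in which case it reduces to the single term $\chi_V(g\, d(g)) = \chi_V(g)$, because right multiplication by $d(g)$ fixes $g$. Hence $(\chi_V \ast \chi_W)(g) = \chi_V(g)$ when $d(g) \in W$ and $0$ otherwise. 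If $g \notin \mathscr{G}^{(0)}$, both sides vanish (the left since $\chi_V(g) = 0$, the right since $V \cap W \subseteq \mathscr{G}^{(0)}$); if $g \in \mathscr{G}^{(0)}$, then $d(g) = g$, so the value is $\chi_V(g)\chi_W(g) = \chi_{V \cap W}(g)$. This yields $\chi_V \ast \chi_W = \chi_{V \cap W}$.

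With the identity in hand the proposition is immediate: $\chi_V \ast \chi_W = \chi_V$ if and only if $\chi_{V \cap W} = \chi_V$, if and only if $V \cap W = V$, if and only if $V \subseteq W$.

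The argument is routine and I do not anticipate a genuine obstacle; the only step requiring any care is the bookkeeping in the convolution sum, where one uses that the elements of $\mathscr{G}^{(0)}$ are idempotent and self-inverse in order to collapse the sum to at most one surviving term.
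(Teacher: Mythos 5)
Your proof is correct and follows essentially the same route as the paper: both arguments reduce the proposition to the identity $\chi_V \ast \chi_W = \chi_{V \cap W}$ and then conclude from $V \cap W = V \iff V \subseteq W$. The only difference is that the paper obtains the identity by citing the product formula $\chi_V \ast \chi_W = \chi_{VW}$ for compact open bisections and observing that $VW = V \cap W$ for subsets of the unit space, whereas you verify it by a direct (and correct) evaluation of the convolution sum.
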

\begin{proof}
Recall from \cite{SteinbergGroupoidAlgebra} and \cite{groupoidapproachleavitt}, for any compact open bisections $V, W$, $\chi_V \ast \chi_W = \chi_{VW}$.  Suppose $V,W$ are any two compact subsets of $\mathscr{G}^{(0)}$, then $V,W$ are also compact open bisections and we have 
\begin{equation} \label{eq:0001}
\chi_V \ast \chi_W = \chi_{VW} =\chi_{V \cap W} 
\end{equation}
\newline
and $\chi_{V \cap W} = \chi_V$ if and only if $V \cap W = V$ if and only if $V \subseteq W$.
\end{proof}
\begin{Lemma} \label{thm:0003}
Suppose $R$ is a commutative ring with unit and $\mathscr{G}$ is an ample groupoid and $\mathscr{G}^{(0)} = \coprod _{\alpha \in J} U_{\alpha}$ where each $U_{\alpha}$ is a clopen invariant subset of $\mathscr{G}^{(0)}$, then $R \mathscr{G} =\bigoplus _{\alpha \in J} R \mathscr{G}{\restriction}_{U_{\alpha}} = \bigoplus _{\alpha \in J} R \mathscr{G}_{U_{\alpha}}$
\end{Lemma}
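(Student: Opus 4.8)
The plan is to first establish the statement as a direct-sum decomposition of $R\mathscr{G}$ into $R$-submodules, and then to upgrade it to a decomposition into two-sided ideals, i.e.\ a ring direct sum. I would begin by recording two easy reductions. First, since each $U_{\alpha}$ is invariant we have $\mathscr{G}_{U_{\alpha}} = \mathscr{G}{\restriction}_{U_{\alpha}}$ by the very definition of invariance, so the second claimed equality is automatic once the first is proved. Second, by Proposition~\ref{prop:1004} each $\mathscr{G}{\restriction}_{U_{\alpha}}$ is an open ample subgroupoid, and since moreover $U_{\alpha}$ is clopen and invariant, $\mathscr{G}{\restriction}_{U_{\alpha}} = d^{-1}(U_{\alpha})$ is clopen in $\mathscr{G}$; hence a subset of $\mathscr{G}{\restriction}_{U_{\alpha}}$ is a compact open bisection of $\mathscr{G}{\restriction}_{U_{\alpha}}$ exactly when it is a compact open bisection of $\mathscr{G}$. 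Thus $R\mathscr{G}{\restriction}_{U_{\alpha}}$ sits canonically inside $R\mathscr{G}$ as the $R$-submodule (in fact subalgebra) spanned by the $\chi_B$ with $B$ a compact open bisection contained in $\mathscr{G}{\restriction}_{U_{\alpha}}$.

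The key step will be to show that every generator $\chi_B$ of $R\mathscr{G}$ lies in $\sum_{\alpha \in J} R\mathscr{G}{\restriction}_{U_{\alpha}}$. Given a compact open bisection $B$, its image $d(B)$ is compact and is covered by the pairwise disjoint open sets $U_{\alpha}$, so only finitely many of them, say $U_{\alpha_1},\dots,U_{\alpha_k}$, meet $d(B)$, giving $d(B) = \coprod_{j=1}^{k}\bigl(d(B)\cap U_{\alpha_j}\bigr)$ and $B = \coprod_{j=1}^{k} B^{(j)}$ where $B^{(j)} = B \cap d^{-1}(U_{\alpha_j})$. Each $B^{(j)}$ is open (because $U_{\alpha_j}$ is open), compact (because $U_{\alpha_j}$ is closed and $B$ is compact), a bisection (being a subset of the bisection $B$), and satisfies $B^{(j)} \subseteq d^{-1}(U_{\alpha_j}) = \mathscr{G}{\restriction}_{U_{\alpha_j}}$ by invariance. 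Since $B = \coprod_j B^{(j)}$ we obtain the pointwise identity $\chi_B = \sum_{j=1}^{k}\chi_{B^{(j)}}$ with $\chi_{B^{(j)}} \in R\mathscr{G}{\restriction}_{U_{\alpha_j}}$; hence $R\mathscr{G} = \sum_{\alpha\in J} R\mathscr{G}{\restriction}_{U_{\alpha}}$, and as any element of $R\mathscr{G}$ is a finite combination of such generators only finitely many summands ever contribute, so the sum is over the coproduct.

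To get directness I would observe that the subgroupoids $\mathscr{G}{\restriction}_{U_{\alpha}}$ are pairwise disjoint as subsets of $\mathscr{G}$: if $g \in \mathscr{G}{\restriction}_{U_{\alpha}} \cap \mathscr{G}{\restriction}_{U_{\beta}}$ then $d(g) \in U_{\alpha}\cap U_{\beta}$, forcing $\alpha = \beta$. Hence if $\sum_{\alpha} f_{\alpha} = 0$ with $f_{\alpha}\in R\mathscr{G}{\restriction}_{U_{\alpha}}$ and almost all $f_{\alpha}=0$, evaluating at any $g\in\mathscr{G}$ kills all terms except possibly the one $\alpha$ with $g\in\mathscr{G}{\restriction}_{U_{\alpha}}$, so every $f_{\alpha}$ vanishes. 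Finally, for the algebra statement I would take compact open bisections $B\subseteq\mathscr{G}{\restriction}_{U_{\alpha}}$, $C\subseteq\mathscr{G}{\restriction}_{U_{\beta}}$ with $\alpha\neq\beta$; then $\chi_B \ast \chi_C = \chi_{BC}$ and $BC = \emptyset$, since $gh$ with $g\in B$, $h\in C$ would require $d(g)=r(h)\in U_{\alpha}\cap U_{\beta}=\emptyset$. Thus $R\mathscr{G}{\restriction}_{U_{\alpha}} \ast R\mathscr{G}{\restriction}_{U_{\beta}} = 0$ for $\alpha\neq\beta$, and combined with each $R\mathscr{G}{\restriction}_{U_{\alpha}}$ being a subalgebra this makes every $R\mathscr{G}{\restriction}_{U_{\alpha}}$ a two-sided ideal and the module decomposition a ring direct sum.

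I do not expect a genuine obstacle. The only points that need a moment's care are verifying that each piece $B^{(j)}$ is honestly a compact open bisection — which is exactly where both halves of ``$U_{\alpha}$ clopen'' are used, openness for $B^{(j)}$ open and closedness together with compactness of $B$ for $B^{(j)}$ compact — and noting that nothing above is affected by a possible failure of Hausdorffness of $\mathscr{G}$, since every argument takes place inside bisections, which are always Hausdorff.
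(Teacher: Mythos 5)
Your proposal is correct and follows essentially the same route as the paper's proof: both decompose each generator $\chi_V$ by intersecting $V$ with the clopen sets $d^{-1}(U_\alpha)$, use compactness to reduce to finitely many pieces, and obtain the ring direct sum from the disjointness of the subgroupoids (the paper phrases the last step via $\supp(f\ast g)\subseteq \supp(f)\supp(g)$ rather than $\chi_B\ast\chi_C=\chi_{BC}=\chi_\emptyset$, but this is the same observation). No gaps.
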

\begin{proof}
Recall that when $U_\alpha$ is an invariant subset of a groupoid $\mathscr{G}{\restriction}_{U_\alpha}$ = $\mathscr{G}_{U_\alpha}$, so the second equality is obvious.  Let $\mathscr{G}$ be an ample groupoid partitioned by full subgroupoids $\mathscr{G}_{U_{\alpha}}$ where each $U_{\alpha}$ is a clopen invariant subset of $\mathscr{G}^{(0)}$.  For an arbitrary $f \in R \mathscr{G}$, we can write $f= \sum_{i=1}^m r_i \chi_{V_i}$ where $m$ is finite and each $V_i$ is a compact open bisection.  Since $U_\alpha$ is a clopen invariant set,   $\mathscr{G}_{U_\alpha} =d^{-1}({U_\alpha} )$ is clopen.  So $V_i \cap \mathscr{G}_{U_\alpha}$ is a clopen subset of $V_i$ and hence also a compact open bisection for each $i = 1, \hdots, n$.
Then $\{ V_i \cap  \mathscr{G}_{U_{\alpha}}\}_{\alpha \in J}$ is a cover of $V_i$ by compact open bisections for each compact open bisection $V_i, i = 1, \hdots m$, and we can then find a finite subcover for each $V_i$.  We will refer to each subcover as $\mathscr{C}_i, i = 1, \hdots m$.  There will only be finitely many $V_i \cap \mathscr{G}_{U_{\alpha}}$ appearing in each of the finitely many $\mathscr{C}_i$ and the $\mathscr{G}_{U_{\alpha}}$ are disjoint, so without loss of generality we can rewrite
\begin{equation} \label{eq:0007}
f = \sum_{j=1}^n \sum_{i=1}^m  r_i \chi_{V_i \cap  \mathscr{G}_{U_{\alpha_j}}}
\end{equation}
Observe that $\chi_{V_i}{\restriction}_{\mathscr{G}_{U_{\alpha_j}}} = \chi_{V_i \cap  \mathscr{G}_{U_{\alpha_j}}}$ and therefore 
\begin{equation} \label{eq:0003}
f {\restriction}_{\mathscr{G}_{U_{\alpha_j}}} =\sum_{i = 1}^m  r_i \chi_{V_i}{\restriction}_{\mathscr{G}_{U_{\alpha_j}}} = \sum_{i = 1}^m  r_i \chi_{V_i \cap  \mathscr{G}_{U_{\alpha_j}}}    
\end{equation}
Since $\{ U_\alpha \}_{\alpha \in J}$ partition  $\mathscr{G}^{(0)}$ and $\{ \mathscr{G}_{U_\alpha} \}_{\alpha \in J}$ partition $\mathscr{G}$, then the following sum decomposition, where only finitely many of the summands are non-zero, for an arbitrary $f \in R \mathscr{G}$ is unique
\begin{equation} \label{eq:0004}
    f = \sum_{\alpha \in J}  f {\restriction}_{\mathscr{G}_{U_{\alpha}}}
\end{equation}
For arbitrary $f \in \mathscr{G}$, the support of $f$ is 
$$
\supp (f) = \{ \gamma \in \mathscr{G} | f(\gamma) \ne 0 \}.
$$
Then for all $f, g \in R \mathscr{G}$, we have $\supp(fg) \subseteq \supp(f) \supp(g)$, $\supp(f {\restriction}_{\mathscr{G}_{U_\alpha}}) \subseteq \mathscr{G}_{U_\alpha}$, and $\supp(g {\restriction}_{\mathscr{G}_{U_\beta}}) \subseteq \mathscr{G}_{U_\beta}$.  If $\alpha \ne \beta$, then $\supp(f {\restriction}_{\mathscr{G}_{U_\alpha}} \ast g {\restriction}_{\mathscr{G}_{U_\beta}}) \subseteq \mathscr{G}_{U_\alpha} \mathscr{G}_{U_\beta} = \emptyset$ and 
\begin{equation} \label{eq:0008}
f {\restriction}_{\mathscr{G}_{U_\alpha}} \ast g {\restriction}_{\mathscr{G}_{U_\beta}} = 0
\end{equation}
So $R \mathscr{G} = \bigoplus _{\alpha \in J} R \mathscr{G}_{U_{\alpha}}$ is a ring direct sum decomposition.
\end{proof}
\begin{Lemma} \label{thm:0004} Suppose $R$ is a commutative ring with unit and $\mathscr{G}$ is an ample groupoid.  If $\mathscr{G}$ is discrete, then $R \mathscr{G} = \bigoplus _{\alpha \in J} R \mathscr{G}_{\mathscr{O}_{\alpha}} \cong \bigoplus _{\alpha \in J}     M_{\mathscr{O}_{\alpha}}(R \mathscr{G}_{x_\alpha}^{x_\alpha})$ where $J$ is an index set for the orbits of $\mathscr{G}$, $\mathscr{O}_{\alpha}$ is the $\alpha$- \textit{th} orbit, $x_\alpha$ is a representative unit of $\mathscr{O}_\alpha$, and $\mathscr{G}_{x_\alpha}^{x_\alpha}$ is the isotropy group of $x_\alpha$.
\end{Lemma}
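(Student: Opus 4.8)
The plan is to first break $R\mathscr{G}$ into a direct sum over orbits using Lemma~\ref{thm:0003}, and then to identify each orbit summand with a matrix ring by explicitly ``coordinatizing'' a transitive discrete groupoid relative to a choice of arrows out of a base point.

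Since $\mathscr{G}$ is discrete, Remark~\ref{rmk:0007} gives that $\mathscr{G}^{(0)}$ is discrete, so \emph{every} subset of $\mathscr{G}^{(0)}$ is clopen. The orbits $\{\mathscr{O}_\alpha\}_{\alpha\in J}$ partition $\mathscr{G}^{(0)}$ and, by Remark~\ref{rmk:0001}, each is an invariant subset; hence $\mathscr{G}^{(0)}=\coprod_{\alpha\in J}\mathscr{O}_\alpha$ is a partition into clopen invariant subsets, Lemma~\ref{thm:0003} applies verbatim, and we get the ring direct sum decomposition
\[
R\mathscr{G}=\bigoplus_{\alpha\in J} R\mathscr{G}_{\mathscr{O}_\alpha}=\bigoplus_{\alpha\in J} R\mathscr{G}{\restriction}_{\mathscr{O}_\alpha}.
\]
It then suffices to prove, for each fixed orbit $\mathscr{O}=\mathscr{O}_\alpha$, that $R\mathscr{G}{\restriction}_{\mathscr{O}}\cong M_{\mathscr{O}}\bigl(R\mathscr{G}_{x_\alpha}^{x_\alpha}\bigr)$, where $M_{\mathscr{O}}(-)$ denotes the ring of $\mathscr{O}\times\mathscr{O}$ matrices with only finitely many nonzero entries (the usual matrix ring $M_n(-)$ when $\lvert\mathscr{O}\rvert=n<\infty$).

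Fix $\mathscr{O}=\mathscr{O}_\alpha$, the representative unit $x=x_\alpha$, and write $H=\mathscr{G}_x^x$, so that $R\mathscr{G}_x^x=RH$ is the group ring. Using the definition of the orbit, for each $y\in\mathscr{O}$ I would choose an arrow $\gamma_y\colon x\to y$ in $\mathscr{G}$, taking $\gamma_x$ to be the unit at $x$. Because $\mathscr{O}$ is invariant, every arrow $g$ of $\mathscr{G}{\restriction}_{\mathscr{O}}$ has $d(g),r(g)\in\mathscr{O}$, and the assignment $g\mapsto\bigl(r(g),\,\gamma_{r(g)}^{-1}g\,\gamma_{d(g)},\,d(g)\bigr)$ is a bijection from $\mathscr{G}{\restriction}_{\mathscr{O}}$ onto $\mathscr{O}\times H\times\mathscr{O}$, with inverse $(z,h,y)\mapsto\gamma_z h\gamma_y^{-1}$; moreover it is an isomorphism of groupoids, where $\mathscr{O}\times H\times\mathscr{O}$ carries unit space $\mathscr{O}$, source and range equal to the third and first coordinates, and product $(z,h,w)(w,h',y)=(z,hh',y)$ (the cancellation $\gamma_w^{-1}\gamma_w=$ unit at $x$ being exactly what makes this work). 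Since $\mathscr{G}$ is discrete, every singleton of $\mathscr{G}$ is a compact open bisection, so $R\mathscr{G}{\restriction}_{\mathscr{O}}$ is precisely the convolution algebra of finitely supported functions $\mathscr{G}{\restriction}_{\mathscr{O}}\to R$, and similarly for $\mathscr{O}\times H\times\mathscr{O}$.

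Transporting along this isomorphism, I would define $\Phi\colon R\mathscr{G}{\restriction}_{\mathscr{O}}\to M_{\mathscr{O}}(RH)$ by
\[
\Phi(f)_{z,y}=\sum_{h\in H} f\bigl(\gamma_z\,h\,\gamma_y^{-1}\bigr)\,h\ \in RH ,\qquad z,y\in\mathscr{O}.
\]
Finite support of $f$ makes each entry a finite sum and forces all but finitely many entries to vanish, so $\Phi$ is a well-defined $R$-linear bijection (the inverse simply reads off the coefficients). The real content is that $\Phi$ is multiplicative. Expanding $\Phi(f_1\ast f_2)_{z,y}$ with the convolution formula, every factorization $g_1g_2=\gamma_z h\gamma_y^{-1}$ passes through a unique intermediate unit $w=d(g_1)=r(g_2)\in\mathscr{O}$ and is necessarily of the form $g_1=\gamma_z h_1\gamma_w^{-1}$, $g_2=\gamma_w h_2\gamma_y^{-1}$ with $h=h_1h_2$; reindexing the double sum over the data $(w,h_1,h_2)$ and splitting it yields $\sum_{w\in\mathscr{O}}\Phi(f_1)_{z,w}\Phi(f_2)_{w,y}=(\Phi(f_1)\Phi(f_2))_{z,y}$. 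Restricting $\Phi$ to the corner $\chi_{\{x\}}\,R\mathscr{G}{\restriction}_{\mathscr{O}}\,\chi_{\{x\}}$ recovers the isomorphism $\chi_{\{x\}}R\mathscr{G}{\restriction}_{\mathscr{O}}\chi_{\{x\}}\cong R\mathscr{G}_x^x=RH$ of Proposition~\ref{prop:1004}(3), a reassuring consistency check. The only obstacle I anticipate is bookkeeping in this last computation --- tracking the composability conditions ($d(g_1)=r(g_2)$ and the matching of the middle index $w$) and confirming that the chosen arrows $\gamma_y$ cancel correctly --- but nothing deeper is involved. Assembling the orbitwise isomorphisms over $\alpha\in J$ gives $R\mathscr{G}\cong\bigoplus_{\alpha\in J}M_{\mathscr{O}_\alpha}\bigl(R\mathscr{G}_{x_\alpha}^{x_\alpha}\bigr)$, as claimed.
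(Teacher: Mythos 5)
Your proof is correct, and its skeleton matches the paper's: both arguments first use the fact that discreteness of $\mathscr{G}$ forces $\mathscr{G}^{(0)}$ to be discrete, so the orbits are clopen invariant sets and Lemma~\ref{thm:0003} gives the ring direct sum $R\mathscr{G}=\bigoplus_{\alpha}R\mathscr{G}_{\mathscr{O}_\alpha}$, after which everything reduces to the single-orbit case. The difference is in how that single-orbit case is handled. The paper simply cites \cite[Proposition 5.3]{GCRandCCRJournal} for $R\mathscr{G}_{\mathscr{O}}\cong M_{\mathscr{O}}(R\mathscr{G}_x^x)$, together with a parenthetical remark that the cited proof, stated for a field, goes through for any commutative unital ring. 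You instead prove this isomorphism from scratch by coordinatizing the transitive discrete groupoid as $\mathscr{O}\times H\times\mathscr{O}$ via a choice of arrows $\gamma_y\colon x\to y$ and verifying directly that the induced map $\Phi$ intertwines convolution with matrix multiplication; your checks (the bijection, the reindexing of the convolution sum over the intermediate unit $w$, and the consistency with Proposition~\ref{prop:1004}(3) on the corner at $x$) are all sound, and the observation that discreteness makes $R\mathscr{G}{\restriction}_{\mathscr{O}}$ exactly the finitely supported functions is the point that justifies the matrix-ring identification. What your route buys is self-containedness and, in particular, a transparent demonstration that no field hypothesis is needed --- the coefficient ring plays no role in the computation --- whereas the paper must assert this about someone else's proof. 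What the citation buys is brevity. Either is acceptable; yours would make the lemma independent of \cite{GCRandCCRJournal}.
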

\begin{proof}
If $\mathscr{G}$ is discrete, then by  \cite[Proposition 5.3]{GCRandCCRJournal}, for a single orbit $\mathscr{O}$ in $\mathscr{G}$, $R \mathscr{G}_{\mathscr{O}} \cong M_{\mathscr{O}}(R \mathscr{G}_x^x) $, where $x$ is a representative unit of the orbit $\mathscr{O}$.  (Proposition 5.3 \cite{GCRandCCRJournal} is stated for the case where $R$ is a field, but the proof also holds in the more general setting where $R$ is a commutative ring with unit.)  The orbits are disjoint and form a partition of the unit space and since $\mathscr{G}$ is discrete the orbits are disjoint clopen invariant subsets of $\mathscr{G}^{(0)}$.  Therefore, the rest follows from Lemma \ref{thm:0003}.
\end{proof}
\section{Preliminaries on Chain Conditions}
\subsection{Classical Chain Conditions}
Recall the classical notions of ascending and descending chain conditions on an object such as a group, ring, or module.  An object satisfies the ascending (descending) chain condition if every ascending (descending) chain of subobjects has finite length.  When the object satisfies the ascending (descending) chain condition for its subobjects we say that the object is noetherian (artinian).
\newline 
When our object is a ring, the subobjects are left-  or right- ideals depending on whether we are investigating left- or right- noetherian (artinian).  In the case of groups the subobjects are subgroups.
\newline
We will need the following results.
\begin{Thm} [Hilbert's Basis Theorem 1.9 of \cite{GoodearlWarfield-NonCommNoether}] \label{thm:2011} Let $R[x]$ be a polynomial ring in one indeterminate.  If the coefficient ring $R$ is a left (right) noetherian unital ring then so is the polynomial ring R[x].
\end{Thm}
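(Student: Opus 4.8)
The plan is to transfer the classical proof of Hilbert's basis theorem, paying attention only to the one point where noncommutativity could interfere: in $R[x]$ the indeterminate $x$ is central, so that ``multiply by $x^k$'' preserves membership in a one-sided ideal while shifting degree predictably, and the leading coefficient of $r x^k f$ is exactly $r$ times the leading coefficient of $f$. I would prove the left-noetherian statement and note that the right-noetherian statement is symmetric (equivalently, apply the left case to the opposite ring, using $R[x]^{\mathrm{op}} \cong R^{\mathrm{op}}[x]$).

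So fix a left ideal $I \subseteq R[x]$; the goal is to exhibit a finite generating set. For each $n \ge 0$ let $L_n$ be the set consisting of $0$ together with the leading coefficients of all polynomials in $I$ of degree exactly $n$. First I would check that each $L_n$ is a left ideal of $R$: closure under left multiplication by $R$ follows because $I$ is a left ideal and $rf$ is either $0$, or of degree $n$ with leading coefficient $r\cdot\mathrm{lead}(f)$; closure under addition is the observation that the sum of two degree-$n$ members of $I$ either has degree $n$ (and witnesses the sum of leading coefficients) or has smaller degree (so the sum of leading coefficients is $0 \in L_n$). Centrality of $x$ gives $L_n \subseteq L_{n+1}$, since $xf \in I$ has degree $n+1$ and the same leading coefficient as $f$.

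Now invoke that $R$ is left noetherian: the ascending chain $L_0 \subseteq L_1 \subseteq \cdots$ stabilizes at some $L_N$, and each of $L_0,\dots,L_N$ is finitely generated as a left ideal. Choose finite generating sets, and for each generator choose a polynomial in $I$ realizing it as a leading coefficient; let $S = \{ f_{n,i} : 0 \le n \le N,\ i \}$ be the resulting finite set, with $\deg f_{n,i} = n$. I would then show that $S$ generates $I$ as a left ideal by induction on the degree $d$ of an arbitrary $g \in I$: let $b$ be the leading coefficient of $g$; since $b \in L_d = L_{\min(d,N)}$, write $b$ as an $R$-combination of the chosen generators of that $L$, and subtract from $g$ the corresponding $R$-combination of the $f_{n,i}$ (each multiplied by $x^{d-n}$, which is legitimate because $x$ is central). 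The result lies in $I$, has degree strictly less than $d$, and by the inductive hypothesis lies in the left ideal generated by $S$; hence so does $g$. The argument is entirely routine, and the only step worth flagging is the repeated use of centrality of $x$ — with that in hand there is nothing genuinely noncommutative left to handle.
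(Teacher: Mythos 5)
Your argument is correct and complete; it is the standard leading-coefficient proof of the noncommutative Hilbert Basis Theorem, which is essentially the argument in the cited source (Goodearl--Warfield, Theorem 1.9) — the paper itself gives no proof and simply cites the result. The only point worth a second look is your justification that $L_n$ is closed under left multiplication: if $r\cdot\mathrm{lead}(f)=0$ but $rf\neq 0$, then $rf$ has degree strictly less than $n$ rather than being zero or of degree $n$; this does not affect the conclusion, since in that case $r\cdot\mathrm{lead}(f)=0\in L_n$ by definition, but the sentence as written omits that case. Everything else — the stabilization of $L_0\subseteq L_1\subseteq\cdots$, the choice of finitely many $f_{n,i}$ for $n\le N$, the degree-lowering induction using $x^{d-\min(d,N)}$, and the reduction of the right-handed statement to the left-handed one via the opposite ring — is sound.
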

\begin{Thm}[Theorem 1 of \cite{connell}] \label{thm:0010} Let $RG$ be a group ring where $R$ is a unital ring and $G$ is a group.  $RG$ is left (right) artinian if and only if $R$ is left (right)  artinian and $G$ is finite.
\end{Thm}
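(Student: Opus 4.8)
The plan is to prove the two implications separately; the reverse direction is a routine module-theoretic fact, and the forward direction is the substantive one, within which the real work will be an argument internal to $RG$ showing that the augmentation ideal cannot become idempotent (or, more generally, stabilize under taking powers) unless $G$ is finite.

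\emph{The ``if'' direction.} If $R$ is left artinian and $G$ is finite, then $RG$ is a free left $R$-module of finite rank $|G|$, hence an artinian left $R$-module (a finitely generated module over a left artinian ring is artinian); since every left ideal of $RG$ is in particular an $R$-submodule, $RG$ is left artinian as a ring. The right-handed version is identical, and one may also deduce it from the left-handed statement via the isomorphism $(RG)^{\mathrm{op}}\cong R^{\mathrm{op}}G$.

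\emph{The ``only if'' direction.} Suppose $RG$ is left artinian. First, $R$ is left artinian, being the image of $RG$ under the augmentation homomorphism $\varepsilon\colon RG\to R$, $\sum_g r_g g\mapsto\sum_g r_g$, and quotients of left artinian rings are left artinian. It remains to show $G$ is finite, so I would assume for contradiction that $G$ is infinite. Let $\omega=\ker\varepsilon$ be the augmentation ideal, a two-sided ideal equal to the $R$-span of $\{g-1:g\in G\}$. By the Hopkins--Levitzki theorem $RG$ is also left noetherian, so each power $\omega^{m}$ is a finitely generated left ideal; moreover the descending chain $\omega\supseteq\omega^{2}\supseteq\cdots$ stabilizes, say $\omega^{n}=\omega^{n+1}$ for some $n\geq 1$. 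If $\omega^{n}=0$ I set $\beta=1$; otherwise I apply the standard (noncommutative, two-sided-ideal) form of Nakayama's lemma to the finitely generated left module $M=\omega^{n}$, for which $\omega M=\omega^{n+1}=\omega^{n}=M$, to obtain $a\in\omega$ with $(1-a)\omega^{n}=0$, and set $\beta=1-a$. In either case $\beta\neq 0$ (note $1\notin\omega$ since $\varepsilon(1)=1\neq 0$) and $\beta\,\omega^{n}=0$.

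The key step would then be a support argument propagating this annihilation downward. Suppose $\beta\,\omega^{m}=0$ for some $m\geq 1$, with the convention $\omega^{0}=RG$. For any $\gamma\in\omega^{m-1}$ and any $h\in G$ we have $(h-1)\gamma\in\omega^{m}$, hence $\beta(h-1)\gamma=0$, i.e.\ $\beta h\gamma=\beta\gamma$ for every $h\in G$. Thus the fixed element $\beta\gamma$ satisfies $\supp(\beta\gamma)\subseteq\supp(\beta)\cdot h\cdot\supp(\gamma)$ for every $h\in G$; if $\beta\gamma\neq 0$, choosing $d\in\supp(\beta\gamma)$ forces $h\in\supp(\beta)^{-1}\,d\,\supp(\gamma)^{-1}$, a finite set, for all $h\in G$ --- impossible since $G$ is infinite. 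Hence $\beta\gamma=0$, and as $\gamma$ was arbitrary, $\beta\,\omega^{m-1}=0$. Iterating this $n$ times turns $\beta\,\omega^{n}=0$ into $\beta\,\omega^{0}=\beta\,RG=0$, so $\beta=\beta\cdot 1=0$, a contradiction; therefore $G$ is finite. The right-handed statement follows by the same reasoning applied to the opposite ring. I expect the only delicate point to be producing the annihilator $\beta$ when $\omega^{n}\neq 0$, which needs the two-sided form of Nakayama's lemma; everything afterward is elementary, and the hypothesis that $G$ is infinite is used exactly once, in the support argument of the last paragraph.
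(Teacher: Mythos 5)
The paper does not prove this statement at all; it is imported verbatim as Theorem~1 of Connell's paper \emph{On the group ring}, so there is no in-paper argument to compare yours against and I am judging your proof on its own terms. Your ``if'' direction is correct, the deduction that $R$ is artinian via the augmentation map is correct, and your support-propagation lemma (from $\beta\,\omega^{m}=0$ and $G$ infinite deduce $\beta\,\omega^{m-1}=0$) is both correct and genuinely the engine of the classical proof. The problem is the step you yourself flag as delicate: the production of a nonzero $\beta$ with $\beta\,\omega^{n}=0$.

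The ``standard noncommutative, two-sided-ideal form of Nakayama's lemma'' you invoke --- $M$ finitely generated, $I$ a two-sided ideal, $IM=M$ implies $(1-a)M=0$ for some $a\in I$ --- is false. Take $S$ to be the ring of upper triangular $2\times 2$ matrices over a field $k$ and $I=\bigl\{\left(\begin{smallmatrix}0&b\\0&c\end{smallmatrix}\right)\bigr\}$, a finitely generated idempotent two-sided ideal with $I\cdot I=I$; for every $a\in I$ one has $(1-a)e_{12}\neq 0$, and in fact the left annihilator of $I$ in $S$ is zero. The two versions of Nakayama that are actually theorems do not apply here: the determinant-trick version (arbitrary ideal, conclusion $(1-a)M=0$) needs the ring to be commutative, and the radical version (conclusion $M=0$) needs $I\subseteq J(S)$, which fails for the augmentation ideal --- already in $\mathbb{Q}[\mathbb{Z}/2]$ one has $\omega=\omega^{2}\neq 0$ while $J=0$, so $\omega^{n}$ need not be nilpotent and need not sit in the radical. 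Worse, the fact you are trying to extract at this step, namely $\mathrm{l.ann}_{RG}(\omega^{n})\neq 0$, is by your own propagation lemma \emph{equivalent} to $G$ being finite (if $G$ is finite then $\sum_{g\in G}g$ already annihilates $\omega$ on the left), so the gap is not cosmetic: the unjustified step carries the entire content of the forward implication. You need a different mechanism --- this is where Connell's actual argument does its real work --- to convert ``$RG$ is left artinian'' into a nonzero one-sided annihilator of some power of $\omega$ before the support argument can take over.
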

\begin{Thm}[Theorem 2 of \cite{connell}] \label{thm:0011} Let $RG$ be a group ring where $R$ is a unital ring and $G$ is a group.  
If $RG$ is left (right) noetherian, then $R$ is left (right) noetherian and $G$ is noetherian.
\end{Thm}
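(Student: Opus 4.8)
The plan is to establish the two conclusions separately, and to treat only the left-handed case, since the right-handed case is symmetric: $(RG)^{\mathrm{op}}\cong R^{\mathrm{op}}G$ (via $\sum r_g g\mapsto\sum r_g g^{-1}$), and ``$RG$ right noetherian'' corresponds to ``$R^{\mathrm{op}}G$ left noetherian''. For the statement that $R$ is left noetherian, I would use the augmentation homomorphism $\varepsilon\colon RG\to R$, $\sum_g r_g g\mapsto\sum_g r_g$. A direct check shows $\varepsilon$ is a surjective ring homomorphism, so $R\cong RG/\ker\varepsilon$ is a quotient ring of $RG$; since left ideals of a quotient ring $RG/I$ correspond bijectively and order-preservingly to left ideals of $RG$ containing $I$, the ascending chain condition passes from $RG$ to $R$, and $R$ is left noetherian.

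For the statement that $G$ is noetherian (satisfies the ascending chain condition on subgroups), the idea is to attach to each subgroup $H\le G$ the left ideal $I_H=\sum_{h\in H}RG\,(h-1)$ and to show that $H\mapsto I_H$ is strictly order-preserving. The crux is the identification, as left $RG$-modules, $RG/I_H\cong R[G/H]$, where $R[G/H]$ is the free $R$-module on the set of left cosets with $G$ acting by left translation: working modulo $I_H$ one gets $\overline{gh}=\overline{g}$ for all $g\in G$, $h\in H$ (because $g(h-1)=gh-g\in I_H$), and a coset-by-coset bookkeeping shows these are the only relations, namely $I_H=\bigoplus_{gH\in G/H}\bigl(\text{augmentation-zero submodule of }R\langle gH\rangle\bigr)$. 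Granting this, an inclusion $H_1\subseteq H_2$ induces the surjection $R[G/H_1]\twoheadrightarrow R[G/H_2]$, $[gH_1]\mapsto[gH_2]$, whose kernel is $I_{H_2}/I_{H_1}$; if $H_1\subsetneq H_2$ and $h\in H_2\setminus H_1$, then $[eH_1]\ne[hH_1]$ in $R[G/H_1]$ while $[eH_2]=[hH_2]$ in $R[G/H_2]$, so (using $R\ne 0$) this map is not injective and hence $I_{H_1}\subsetneq I_{H_2}$.

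Putting the pieces together, a strictly ascending chain of subgroups $H_1\subsetneq H_2\subsetneq\cdots$ would yield a strictly ascending chain of left ideals $I_{H_1}\subsetneq I_{H_2}\subsetneq\cdots$ in $RG$, contradicting that $RG$ is left noetherian; so $G$ is noetherian. I expect the only real content to be the module identification $RG/I_H\cong R[G/H]$, in particular verifying that no relations beyond $\overline{gh}=\overline{g}$ are forced — which is exactly the direct-sum decomposition of $I_H$ over cosets indicated above — together with the harmless remark that $R\ne 0$ is needed for the non-injectivity step (for $R=0$ the theorem is vacuous). Everything else is formal manipulation with augmentation ideals and the correspondence theorem for ideals, so no serious obstacle is anticipated.
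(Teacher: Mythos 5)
The paper offers no proof of this statement—it is quoted verbatim as Theorem 2 of Connell's paper—and your argument is essentially Connell's classical one: the augmentation surjection $\varepsilon\colon RG\to R$ handles the coefficient ring, and the strictly order-preserving assignment $H\mapsto I_H=\sum_{h\in H}RG\,(h-1)$, justified via the identification $RG/I_H\cong R[G/H]$ (equivalently, via $g-1\in I_H\iff g\in H$), handles the group. Your proof is correct, including the coset-wise direct-sum decomposition of $I_H$, the reduction of the right-handed case through $(RG)^{\mathrm{op}}\cong R^{\mathrm{op}}G$, and the remark that $R\ne 0$ is what makes the strictness step work.
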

\begin{Thm} [Theorem 2.7, Chapter 10 of \cite{PassmanBook}] \label{thm:0012}
Let $S$ be a unital ring, $R$ a left (right) noetherian subring with the same multiplicative identity, and $G$ a polycyclic by finite group of units in $S$.  If $Rg = gR$ for all $g \in G$ and $S = \langle R, G \rangle$, then $S$ is left (right) noetherian.
\end{Thm}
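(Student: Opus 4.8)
The plan is to realize $S=\langle R,G\rangle$ as an iterated extension of $R$ — alternately by skew Laurent polynomial rings and by finite module extensions — built along a subnormal series of $G$, and then to propagate the noetherian property up that tower. I will treat the left-handed assertion; the right-handed one is symmetric. First I would record two preliminary observations that streamline the argument. Since $Rg=gR$ for every $g\in G$, every element of $S$ can be rewritten with its ring coefficients on the left, so $S=\sum_{g\in G}Rg$; moreover, for any subgroup $H\le G$ the subring $\langle R,H\rangle$ equals $\sum_{h\in H}Rh$, again satisfies the hypotheses of the theorem (a subgroup of a polycyclic-by-finite group is polycyclic-by-finite), and is carried by conjugation by any $g\in G$ onto $\langle R,gHg^{-1}\rangle$, because $gRg^{-1}=R$.

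Next I would fix a subnormal series $1=G_0\triangleleft G_1\triangleleft\cdots\triangleleft G_n=G$ in which every factor $G_i/G_{i-1}$ is either infinite cyclic or finite — such a series exists precisely because $G$ is polycyclic-by-finite — and induct on $n$ to show $\langle R,G\rangle$ is left noetherian. The base case $n=0$ gives $S=R$, noetherian by hypothesis. For the inductive step, set $N=G_{n-1}$ and $T=\langle R,N\rangle$; the inductive hypothesis applied to the truncated series makes $T$ left noetherian. If $G/N$ is finite, pick coset representatives $g_1,\dots,g_k$ of $N$ in $G$; then $S=\sum_{i=1}^k Tg_i$ is a finitely generated left $T$-module, hence a noetherian left $T$-module, and since every left ideal of $S$ is in particular a left $T$-submodule, $S$ has ACC on left ideals. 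If instead $G/N$ is infinite cyclic, choose $g\in G$ whose image generates it and let $\sigma\in\operatorname{Aut}(T)$ be conjugation by $g$ (well defined since $g$ normalizes both $R$ and $N$); writing each element of $G$ uniquely as $ng^j$ with $n\in N$, $j\in\mathbb{Z}$ gives $S=\sum_{j\in\mathbb{Z}}Tg^j$, and $t\mapsto g$ extends to a surjective ring homomorphism $T[t,t^{-1};\sigma]\twoheadrightarrow S$, the skew relation $ta=\sigma(a)t$ being valid in $S$ because $ga=(gag^{-1})g=\sigma(a)g$. Since a homomorphic image of a left noetherian ring is left noetherian, it remains only to know that $T[t,t^{-1};\sigma]$ is left noetherian.

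That last point — a skew, Laurent version of the Hilbert basis theorem — is the single substantive ingredient, and the hard part will be establishing it, since Theorem \ref{thm:2011} as quoted covers only the commutative, untwisted, non-Laurent case. I would prove it by the classical leading-coefficient argument twisted by $\sigma$: given a left ideal $I$, for each $m\ge 0$ collect the leading coefficients of the degree-$\le m$ elements of $I$ (after multiplying by a suitable power of $t$ to clear negative degrees); using $\sigma$ one checks that these sets form an ascending chain of left ideals of $T$, which must stabilize, and that lifts of finitely many generators of the stable terms generate $I$. Alternatively this can be cited directly, e.g.\ from \cite{GoodearlWarfield-NonCommNoether} or \cite{PassmanBook}. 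With that in hand the induction closes and the theorem follows; every remaining step is routine bookkeeping with the commutation rule $Rg=gR$.
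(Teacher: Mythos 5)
The paper offers no proof of this statement: it is quoted directly as Theorem 2.7 of Chapter 10 of \cite{PassmanBook} and used only to deduce Hall's theorem (Corollary \ref{thm:0013}), so there is no in-paper argument to compare yours against. What you have written is, in substance, the standard proof from the cited source, and it is correct in all essentials: the reduction $S=\sum_{g\in G}Rg$ via $Rg=gR$, the induction along a subnormal series with finite or infinite cyclic factors, the finite-index step (where $S$ is a finitely generated left module over the left noetherian subring $T$, so ACC on left $T$-submodules gives ACC on left ideals of $S$), and the infinite cyclic step presenting the next layer as a homomorphic image of the skew Laurent ring $T[t,t^{-1};\sigma]$ all go through as you describe. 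You are also right to flag the skew Laurent Hilbert basis theorem as the one substantive ingredient and to observe that Theorem \ref{thm:2011} as stated in the paper (commutative, untwisted, non-Laurent) does not cover it. The only place your sketch is loose is precisely there: for a left ideal $I$ the sets $J_m$ of leading coefficients of degree-$m$ elements satisfy $\sigma(J_m)\subseteq J_{m+1}$ rather than $J_m\subseteq J_{m+1}$, so one must pass to the twisted chain $\sigma^{-m}(J_m)$ to obtain a genuinely ascending chain of left ideals of $T$; this is legitimate exactly because $\sigma$ is an automorphism, which your setup guarantees, and it is also the point where the automorphism hypothesis is genuinely needed. With that repair, or with a direct citation of the skew Laurent Hilbert basis theorem from \cite{GoodearlWarfield-NonCommNoether} or \cite{PassmanBook}, your argument is a complete and correct proof of the quoted result.
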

This gives us as a corollary a well-known result due to Hall.
\begin{Cor} [Theorem 1 of \cite{HallFinitenessConditions}] \label{thm:0013} Let $RG$ be a group ring where $R$ is a left (right) noetherian unital ring and $G$ is a polycyclic by finite group, then $RG$ is left (right) noetherian.
\end{Cor}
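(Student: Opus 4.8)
The plan is to obtain this statement as an immediate special case of Theorem~\ref{thm:0012}, taking $S=RG$, taking the distinguished subring to be the canonical copy of $R$ inside $RG$, and taking the group of units to be the canonical copy of $G$ inside the unit group of $RG$. We give the argument for the left noetherian case; the right noetherian case is identical, using the right-handed version of Theorem~\ref{thm:0012}.

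So suppose $R$ is left noetherian and $G$ is polycyclic by finite, and set $S=RG$. Identify $R$ with $R\cdot 1_G=\{r\cdot 1_G : r\in R\}\subseteq RG$: this is a subring of $RG$ containing the identity $1_R\cdot 1_G=1_{RG}$, and it is left noetherian by hypothesis. Identify $G$ with $\{1_R\cdot g : g\in G\}\subseteq RG$: each $1_R\cdot g$ is a unit with inverse $1_R\cdot g^{-1}$, so this is a group of units of $RG$ isomorphic to $G$, hence polycyclic by finite. In a group ring the coefficients commute with the group elements, so $(r\cdot 1_G)(1_R\cdot g)=rg=(1_R\cdot g)(r\cdot 1_G)$ for all $r\in R$ and $g\in G$; in particular $Rg=gR$ for every $g\in G$. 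Finally, every element of $RG$ is a finite $R$-linear combination of group elements, so $RG=\langle R,G\rangle$. Thus all the hypotheses of Theorem~\ref{thm:0012} hold, and we conclude that $RG$ is left noetherian.

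Since Theorem~\ref{thm:0012} carries all the content, the only real ``step'' is checking that the canonical images of $R$ and $G$ inside $RG$ are, respectively, a unital subring with the correct identity element and a subgroup of the unit group, which is immediate from the definition of the convolution product; we therefore do not anticipate any genuine obstacle. (If Theorem~\ref{thm:0012} were not available, one could instead induct along a subnormal series $1=G_0\triangleleft\cdots\triangleleft G_n=G$ of $G$ with each factor $G_{i+1}/G_i$ cyclic or finite, crossing the infinite cyclic factors by a skew-Laurent analogue of Hilbert's basis theorem, Theorem~\ref{thm:2011}; but the finite factors still require a Passman/Hall-type input, so the direct deduction above is the more economical route.)
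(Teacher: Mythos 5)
Your proposal is correct and matches the paper's intent exactly: the paper states this result as an immediate corollary of Theorem~\ref{thm:0012} without writing out the verification, and your argument simply supplies the routine checks (the canonical copies of $R$ and $G$ in $RG$, the relation $Rg=gR$, and $RG=\langle R,G\rangle$) that the paper leaves implicit.
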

\begin{Rmk}
We observe that Hilbert's Basis Theorem \ref{thm:2011}, Connell's Results (Theorem \ref{thm:0010} and \ref{thm:0011}), Theorem \ref{thm:0012}, and Hall's Theorem (Corollary \ref{thm:0013}) do not require the base ring $R$ to be commutative and there are technical considerations to consider in the case where the base ring is noncommutative.  However, we will only apply these results in the instance where the coefficient ring is commutative. 
\end{Rmk}
To date the only known results relating chain conditions for group rings with conditions on the underlying coefficient ring and group are  Connell's results \cite{connell}, some of which are used in our results, and Hall \cite{HallFinitenessConditions}.  As we have already seen, Connell showed the group ring is artinian if and only if both the base ring is artinian and the group is finite.  The characterization of noetherian group rings is more involved.  Thus far the largest class of groups with noetherian group rings over a  noetherian coefficient ring are polycyclic-by-finite groups \cite{HallFinitenessConditions}, \cite[Chapter 10]{PassmanBook}.
\newline \newline
We will need the following corollary to the Hopkins-Levitzki Theorem.  Note that when discussing noetherian and artinian rings we assume that the rings are unital.
\begin{Thm} [Hopkins-Levitzki, Theorem 15.11 of \cite{RowenGrdAlgNonComm}] \label{thm:2019}
Any unital left (right) artinian ring $R$ is also left (right) noetherian
\end{Thm}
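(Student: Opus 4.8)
The plan is to run the standard argument through the Jacobson radical and a finite radical filtration, using the descending chain condition only at two points. Write $J = J(R)$ for the Jacobson radical. First I would note that $R/J$ is again left artinian and has zero Jacobson radical, hence is semisimple (a left artinian ring with trivial radical is semisimple by Artin--Wedderburn theory). Consequently every left $R/J$-module is semisimple, i.e. a direct sum of simple modules; and if such a module is moreover artinian then the direct sum must be finite (an infinite decomposition would produce an infinite strictly descending chain of submodules), so it has finite length and is therefore also noetherian. This observation is what will let me convert ``artinian'' into ``noetherian'' on each layer of the filtration.

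Next I would prove that $J$ is nilpotent. The chain $J \supseteq J^2 \supseteq J^3 \supseteq \cdots$ is a descending chain of left ideals, so by the DCC it stabilizes: $J^n = J^{n+1} = \cdots$; set $I = J^n$, so that $I^2 = I$ and $JI = I$. Suppose $I \neq 0$. The family of left ideals $L$ with $IL \neq 0$ is nonempty (it contains $I$, since $I^2 = I \neq 0$), so by the DCC it has a minimal member $L$, and there is $a \in L$ with $Ia \neq 0$. Then $Ia$ lies in the same family, $Ia \subseteq L$, so $Ia = L$ by minimality; hence $a = xa$ for some $x \in I \subseteq J$, and since $1 - x$ is a unit (elements of $J$ are quasi-regular) we get $a = 0$, a contradiction. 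Therefore $J^n = 0$.

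Finally I would exploit the finite chain of left ideals $R \supseteq J \supseteq J^2 \supseteq \cdots \supseteq J^n = 0$. Each quotient $J^i/J^{i+1}$ is annihilated by $J$, hence is a left module over the semisimple ring $R/J$, and it is artinian as a subquotient of the left artinian module ${}_R R$; by the first paragraph each $J^i/J^{i+1}$ is noetherian. Since in a short exact sequence of modules the middle term is noetherian exactly when both outer terms are, an induction climbing the chain from $J^n$ up to $R$ shows that ${}_R R$ is noetherian, i.e. $R$ is left noetherian. The right-sided statement follows by the left--right symmetric argument.

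The main obstacle is the nilpotence of $J$: that is the one genuinely non-formal step, where the DCC has to be combined with a minimality/minimal-counterexample argument and with quasi-regularity of radical elements. Everything else --- the semisimplicity of $R/J$, the finite-length statement for artinian semisimple modules, and the dévissage along the radical filtration --- is routine module-theoretic bookkeeping.
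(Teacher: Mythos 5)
Your argument is correct and complete: it is the standard proof of Hopkins--Levitzki via the Jacobson radical (semisimplicity of $R/J$, nilpotence of $J$ from the DCC together with quasi-regularity, and d\'evissage along the finite filtration $R \supseteq J \supseteq \cdots \supseteq J^n = 0$, using that an artinian semisimple module has finite length). The paper does not prove this statement at all --- it imports it as Theorem 15.11 of the cited reference --- so there is no in-paper argument to compare against; your write-up is a faithful reproduction of the classical proof found in the standard sources, and every step, including the minimal-left-ideal argument showing $J^n = 0$, is sound.
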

\subsection{Generalized Chain Conditions}
\begin{Def}[Local Units] \label{def:0001} A \textit{set of local units} for a ring $R$ is a set $E \subseteq R$ of idempotents with the property that for any finite subset $x_1, \hdots , x_n \in R$ there exists $e \in E$ such that $ex_i=x_i e=x_i$, $i = 1, \hdots n$.  We say that $R$ is a \textit{ring with local units} if $R$ contains a set of local units.  Equivalently, $R$ contains a set of local units if $R$ is a directed union of unital subrings.
\end{Def}
We remind the reader that Proposition \ref{prop:1004}(\ref{prop:1004_4}) implies that etale groupoid algebras have local units.
\begin{Def}[Enough Idempotents] \label{def:0002}  We say that a ring $R$ has \textit{enough idempotents} if there exists a collection of mutually orthogonal idempotents $\{ e_{\alpha} \}_{\alpha \in A}$ such that
$$
R = \bigoplus_{\alpha \in A} e_{\alpha}R = \bigoplus_{\alpha \in A} Re_{\alpha}
$$
where the above are module direct sum decompositions.
\end{Def}
\begin{Rmk}  \label{rmk:0002} If we let $S = \{ e_{\alpha} \}_{\alpha \in \Lambda}$ be mutually orthogonal idempotents of the above definition then $E := \{ \sum_{k=1}^n e_k:  e_1, \dots e_n \in S \}$ is a set of local units for $R$ \cite{tomfordeuniqueidealLPA}.  Thus rings with enough idempotents and groupoid algebras are rings with local units, where for groupoid algebras the set of idempotents $E$ is the set of characteristic functions of compact open sets in $\mathscr{G}^{(0)}$ (Proposition \ref{prop:1004}(\ref{prop:1004_4})).
\end{Rmk}
\begin{Def}\label{def:0009}
If $R$ is a ring, then by a left $R$-module we mean a unitary left $R$-module, that is, a module $M$ with the added condition that $RM = M$.  For a unital ring $R$, this is equivalent to $1 \cdot m = m$ for all $m \in M$ .
\end{Def}
Throughout this paper we will assume our modules are unitary.
\begin{Def}[Categorical Chain Conditions] \label{def:0006}   Let $R$ be a ring with local units.
\begin{enumerate}
    \item $R$ is \textit{categorically left (resp. right) artinian} if every finitely generated left (resp. right) $R$-module is artinian.
    \item $R$ is \textit{categorically left (resp. right) noetherian} if every finitely generated left (resp. right) $R$-module is noetherian.
\end{enumerate}
\end{Def}
\begin{Def}[Local Chain Conditions] \label{def:0007} Let $R$ be a ring with local units.
\begin{enumerate}
    \item $R$ is \textit{locally left (resp. right) artinian} if $eRe$ is left (resp. right) artinian for every idempotent $e \in R$.
    \item $R$ is \textit{locally left (resp. right) noetherian} if $eRe$ is left (resp. right) noetherian for every idempotent $e \in R$.
\end{enumerate}
\end{Def}
\begin{Rmk} \label{rmk:0008}
The classical chain conditions are quite restrictive.  For example, when the ring has local units (such as a groupoid algebra) and left (right) noetherian, then the ring must be unital.  For this reason the categorical and local chain conditions described above were developed.   A ring isomorphic to its opposite ring (e.g., rings with involution such as groupoid algebras) is left categorically (locally) noetherian (artinian) if and only if it is right categorically (locally) noetherian (artinian).  So when it is unnecessary to make a distinction between left and right categorical or local chain conditions, such as the main theorem (Theorem \ref{thm:1001}), we will omit the adjective “left” or “right” in describing the chain condition.
\end{Rmk}
We also observe that unital left (right) artinian rings are always left (right)  noetherian (Hopkins-Levitzki Theorem \ref{thm:2019}) so locally left (right) artinian implies locally left (right)  noetherian.
\section{Results on Generalized Chain Conditions}
We will need the following result of Abrams, Arana Pino, Perera and Siles Molina \cite{AbramsetalchaincondLPA}.
\begin{Prop} \label{prop1}
Suppose $R$ is a ring with local units and $E$ is a set of idempotents such that $R = \oplus_{e \in E} Re$.  Then $R$ is categorically left artinian (resp. noetherian) if and only if each $Re$ is a left artinian (resp. noetherian) $R$-module.  In particular, if $R$ is a unital ring, then $R$ is left artinian (resp. noetherian) if and only if $R$ is categorically left artinian (resp. noetherian).
\end{Prop}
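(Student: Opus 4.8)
\emph{The plan} is to derive both implications directly from the definition of categorically artinian (resp.\ noetherian) rings, using only the standard fact that the class of artinian (resp.\ noetherian) modules is closed under passing to submodules, to quotients, and to finite direct sums.

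For the forward direction I would first observe that each $Re$ with $e\in E$ is a finitely generated unitary left $R$-module: since $R$ has local units we have $RR=R$, hence $R(Re)=Re$, and the relation $e=e^2\in Re$ shows that $Re$ is the submodule generated by the single element $e$. Consequently, if $R$ is categorically left artinian (resp.\ noetherian), then each $Re$ is an artinian (resp.\ noetherian) $R$-module, with nothing further to check.

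The content lies in the converse. Assume every $Re$ ($e\in E$) is artinian (resp.\ noetherian), and let $M=Rm_1+\cdots+Rm_n$ be an arbitrary finitely generated unitary left $R$-module. Because $M$ is unitary there is an idempotent $u\in R$ with $um_i=m_i$ for all $i$ (express each $m_i$ as a finite $R$-combination of elements of $M$ and apply a local unit to the coefficients). The $R$-linear map $(Ru)^n\to M$, $(a_1,\dots,a_n)\mapsto\sum_i a_i m_i$, is then surjective, since for $a_i=b_i u$ we get $a_i m_i=b_i m_i$. Hence it suffices to show that $Ru$ is artinian (resp.\ noetherian), for then so is $(Ru)^n$ and so is its quotient $M$. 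To control $Ru$, decompose $u$ along the direct sum $R=\bigoplus_{e\in E}Re$: write $u=\sum_{e\in F}u_e$ with $F\subseteq E$ finite and $u_e\in Re$. Then $Ru=\sum_{e\in F}Ru_e\subseteq\bigoplus_{e\in F}Re$, so $Ru$ is a submodule of a finite direct sum of artinian (resp.\ noetherian) modules and therefore itself artinian (resp.\ noetherian). This closes the argument.

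Finally, the ``in particular'' clause follows by taking $E=\{1\}$, which is legitimate because a unital ring is a ring with local units and $R=R\cdot 1$: the main equivalence then reads that $R$ is categorically left artinian (resp.\ noetherian) precisely when the left regular module $R$ is artinian (resp.\ noetherian), i.e.\ precisely when $R$ is a left artinian (resp.\ noetherian) ring. I do not anticipate a genuine obstacle; the only point deserving care is the reduction of an arbitrary finitely generated $M$ to the module $Ru$ — choosing the local unit $u$, identifying $Ru$ as a submodule of $\bigoplus_{e\in F}Re$, and remembering that in the unitary setting the submodule generated by $m_1,\dots,m_n$ is exactly $Rm_1+\cdots+Rm_n$ (using $R^2=R$), so that the displayed map really is onto.
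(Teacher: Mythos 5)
Your proof is correct and complete. Note that the paper itself gives no proof of this proposition --- it is quoted from \cite{AbramsetalchaincondLPA} --- so there is nothing internal to compare against; your argument is the standard one and all the delicate points are handled: you verify that each $Re$ is a finitely generated \emph{unitary} module (via $e=e^2\in Re$ and $R^2=R$), you use unitarity of $M$ to produce the local unit $u$ fixing the generators, you correctly realize $M$ as a quotient of $(Ru)^n$, and you embed $Ru$ into the finite sub-sum $\bigoplus_{e\in F}Re$ to import the chain condition. The only remark worth making is that your argument never uses any orthogonality of the idempotents in $E$, only the directness of the decomposition $R=\bigoplus_{e\in E}Re$, which matches the generality of the statement; and the reduction to $(Ru)^n$ in fact proves the slightly stronger fact that it suffices for $Re$ to be artinian (resp.\ noetherian) for $e$ ranging over any set of local units, which is exactly how the proposition gets applied later (e.g.\ in the proof of Theorem \ref{thm:1008}).
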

\begin{Rmk} \label{rmk:1001}
The above proposition holds for categorically right artinian (resp. noetherian) when we replace $Re$ with $eR$ viewed as a right $R$-module, and each $eR$ must be right artinian (noetherian).  
\end{Rmk}
The following proposition is a well known result and will be called upon later.  
\begin{Prop} \label{prop:1002}
Let $f: R \rightarrow S$ be a homomorphism of rings.  If $M$ is an left $S$-module, then $M$ is a left $R$-module where the action of $R$ on $M$ is given by $r \cdot m = f(r) \cdot m$.  Furthermore, if $f$ is surjective, then $R$-submodules of $M$ are the same as $S$-submodules of $M$.
\end{Prop}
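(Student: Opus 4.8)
The plan is to check the module axioms for the first claim by transporting them along $f$, and then to establish the second claim by a direct comparison of the two scalar actions, invoking surjectivity only where it is genuinely needed.

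For the first assertion I would fix $r,r'\in R$ and $m,m'\in M$ and verify each axiom in turn. Additivity in the module argument, $r\cdot(m+m') = r\cdot m + r\cdot m'$, comes for free from the $S$-module structure, since $r\cdot(m+m') = f(r)\cdot(m+m')$. Additivity in the ring argument uses that $f$ is additive: $(r+r')\cdot m = f(r+r')\cdot m = f(r)\cdot m + f(r')\cdot m = r\cdot m + r'\cdot m$. Compatibility with multiplication uses that $f$ is multiplicative: $(rr')\cdot m = f(rr')\cdot m = \bigl(f(r)f(r')\bigr)\cdot m = f(r)\cdot\bigl(f(r')\cdot m\bigr) = r\cdot(r'\cdot m)$. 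This already shows $M$ is a left $R$-module in the purely algebraic sense. I would add one remark: if $f$ is surjective then $RM = f(R)M = SM = M$, so $M$ is unitary in the sense of Definition~\ref{def:0009}; in the non-surjective case the phrase ``left $R$-module'' should be read without the unitarity requirement, but every application of this proposition in the paper is to a surjective $f$, so this caveat is harmless.

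For the second assertion I would assume $f$ surjective and note first that the underlying abelian group of $M$ does not depend on whether we regard $M$ over $R$ or over $S$, so a subset $N\subseteq M$ is an additive subgroup either way; only closure under scalars needs checking. If $N$ is an $S$-submodule then $r\cdot n = f(r)\cdot n\in N$ for all $r\in R$, $n\in N$, so $N$ is an $R$-submodule. Conversely, if $N$ is an $R$-submodule, then given $s\in S$ and $n\in N$ I would pick $r\in R$ with $f(r)=s$ (using surjectivity) and conclude $s\cdot n = f(r)\cdot n = r\cdot n\in N$, so $N$ is an $S$-submodule; hence the two notions of submodule coincide.

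I do not expect a genuine obstacle: the statement is a routine unwinding of the definitions. The only point worth stating carefully is the one flagged above --- surjectivity of $f$ is precisely what makes the restricted action unitary and what forces $R$-stability and $S$-stability of a submodule to coincide.
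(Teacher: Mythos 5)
Your proof is correct; the paper states this proposition as a well-known fact and gives no proof, and your argument is the standard unwinding of the definitions that such a proof would consist of. Your remark about unitarity is a worthwhile refinement: since the paper's Definition of module (requiring $RM=M$) is in force throughout, observing that surjectivity of $f$ yields $RM = f(R)M = SM = M$ is exactly the point one should check, and it is indeed harmless here because the proposition is only invoked via the surjective projections $\pi_\alpha$.
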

\begin{Rmk} \label{rmk:1002}
The analog of Proposition \ref{prop:1002} also holds for right $R$- and $S$-modules.
\end{Rmk}
\begin{Thm} \label{thm:1008}  Suppose $R_\alpha$ has enough idempotents for all $\alpha \in A$.  If $R_\alpha$ is categorically left (right) noetherian/artinian for all $\alpha \in A$ then $R = \bigoplus_{\alpha \in A} R_{\alpha}$ is categorically left (right) noetherian/artinian.
\end{Thm}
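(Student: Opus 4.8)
The plan is to manufacture a single common system of idempotents for $R$ out of the ones for the summands $R_\alpha$, and then to transfer the relevant module-theoretic finiteness condition across the ring projections $R\to R_\alpha$ via Proposition \ref{prop:1002}, so that the statement becomes a double application of Proposition \ref{prop1}.

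First I would record the structural facts about $R=\bigoplus_{\alpha\in A}R_\alpha$: this is a ring direct sum, so each $R_\alpha$ is a two-sided ideal, $R_\alpha R_{\alpha'}=0$ for $\alpha\neq\alpha'$, and each projection $\pi_\alpha\colon R\to R_\alpha$ is a surjective ring homomorphism. For each $\alpha$ choose mutually orthogonal idempotents $\{e_{\alpha,\beta}\}_{\beta\in B_\alpha}$ witnessing that $R_\alpha$ has enough idempotents, so $R_\alpha=\bigoplus_\beta R_\alpha e_{\alpha,\beta}=\bigoplus_\beta e_{\alpha,\beta}R_\alpha$. Since $e_{\alpha,\beta}e_{\alpha',\beta'}\in R_\alpha R_{\alpha'}$, the total family $E=\{e_{\alpha,\beta}:\alpha\in A,\ \beta\in B_\alpha\}$ is a set of mutually orthogonal idempotents, and $R=\bigoplus_\alpha R_\alpha=\bigoplus_\alpha\bigoplus_\beta R_\alpha e_{\alpha,\beta}=\bigoplus_{(\alpha,\beta)}R e_{\alpha,\beta}$, where the last equality uses that $R e_{\alpha,\beta}=R_\alpha e_{\alpha,\beta}$ because every cross term $R_{\alpha'}e_{\alpha,\beta}$ vanishes. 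In particular $R$ has enough idempotents, hence local units, so "categorically noetherian/artinian" makes sense for $R$.

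Next I would identify the left $R$-module $R e_{\alpha,\beta}$ with the left $R_\alpha$-module $R_\alpha e_{\alpha,\beta}$: they coincide as abelian groups, and for $r=\sum_{\alpha'}r_{\alpha'}\in R$ and $x\in R_\alpha e_{\alpha,\beta}\subseteq R_\alpha$ one has $rx=r_\alpha x=\pi_\alpha(r)x$, so the native $R$-action on $R e_{\alpha,\beta}$ is exactly the restriction along $\pi_\alpha$ of its $R_\alpha$-module structure. By Proposition \ref{prop:1002} (using that $\pi_\alpha$ is surjective), the $R$-submodules of $R e_{\alpha,\beta}$ coincide with its $R_\alpha$-submodules; hence $R e_{\alpha,\beta}$ is noetherian (resp.\ artinian) as a left $R$-module if and only if $R_\alpha e_{\alpha,\beta}$ is noetherian (resp.\ artinian) as a left $R_\alpha$-module.

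Finally I would assemble the pieces. By hypothesis each $R_\alpha$ is categorically left noetherian (resp.\ artinian), so Proposition \ref{prop1} applied to $R_\alpha$ with idempotent set $\{e_{\alpha,\beta}\}_\beta$ shows every $R_\alpha e_{\alpha,\beta}$ is a noetherian (resp.\ artinian) left $R_\alpha$-module; by the previous step every $R e_{\alpha,\beta}$ is then noetherian (resp.\ artinian) as a left $R$-module, and Proposition \ref{prop1} applied to $R$ with idempotent set $E$ gives that $R$ is categorically left noetherian (resp.\ artinian). The right-handed statement is identical after replacing $R_\alpha e_{\alpha,\beta}$ by $e_{\alpha,\beta}R_\alpha$ and using Remarks \ref{rmk:1001} and \ref{rmk:1002}. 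The one point requiring care is that the $R_\alpha$ are not assumed unital, so one must argue throughout with the orthogonal idempotent decompositions rather than with identities and verify the identification $R e_{\alpha,\beta}=R_\alpha e_{\alpha,\beta}$ as $R$-modules so that Proposition \ref{prop:1002} applies; the remainder is bookkeeping with the direct sum.
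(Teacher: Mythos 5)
Your proposal is correct and follows essentially the same route as the paper: assemble the orthogonal idempotents of the summands into one family for $R$, verify $Re_{\alpha,\beta}=R_\alpha e_{\alpha,\beta}$ with matching module structures via the projection $\pi_\alpha$ and Proposition \ref{prop:1002}, and conclude with Proposition \ref{prop1}. The only cosmetic difference is that you invoke Proposition \ref{prop1} explicitly in both directions (once per summand and once for $R$), which the paper leaves implicit on the summand side.
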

\begin{proof}
Let
\begin{equation} \label{eq:1046}
    R = \bigoplus_{\alpha \in A} R_{\alpha}
\end{equation}
where each $R_{\alpha}$ has enough idempotents and is categorically left (right) noetherian (artinian) $\forall \alpha \in A$.  Then for arbitrary $\alpha \in A$ there exists a collection of orthogonal idempotents $\{ e_{\alpha, i} \}_{i \in I_{\alpha}} \subseteq R_{\alpha}$ such that
\begin{equation} \label{eq:1047}
    R_{\alpha} = \bigoplus_{i \in I_{\alpha}} e_{\alpha,i}R_{\alpha} = \bigoplus_{i \in I_{\alpha}} R_{\alpha}e_{\alpha,i}
\end{equation}
So
\begin{equation} \label{eq:1048}
    R = \bigoplus_{\alpha \in A} R_{\alpha} = \bigoplus_{\alpha \in A} \bigoplus_{i \in I_{\alpha}} e_{\alpha,i}R_{\alpha} = \bigoplus_{\alpha \in A} \bigoplus_{i \in I_{\alpha}} R_{\alpha}e_{\alpha,i}
\end{equation}
\newline
By construction, for every $r,s\in R$ there exist unique decompositions $r=\sum_{\alpha \in A} r_{\alpha}$, $s=\sum_{\alpha \in A}s_{\alpha}$, where each $r_{\alpha}, s_{\alpha} \in R_{\alpha}$ and only finitely many of the $r_{\alpha}$'s and $s_{\alpha}$'s are non-zero and 
\begin{equation} \label{eq:1049}
    r_{\alpha}s_{\beta}= 
\begin{cases}
    0,& \text{if } \alpha \ne \beta \\
    r_{\alpha}s_{\alpha},& \text{otherwise where } r_{\alpha}s_{\alpha} \\  & \text{ is the product in } R_{\alpha}
\end{cases}
\end{equation}
Therefore, 
\begin{equation} \label{eq:1050}
rs = (\sum_{\alpha \in A}r_{\alpha})(\sum_{\beta \in A} s_{\beta})= \sum_{\alpha \in A} \sum_{\beta \in A} r_{\alpha}s_{\beta}=\sum_{\alpha \in A} r_{\alpha}s_{\alpha}  
\end{equation}
Since the $R_\alpha$'s are both subrings and 2-sided ideals of $R$,  \eqref{eq:1046} holds as both an internal direct sum of rings and internal direct sum of 2-sided ideals and therefore an internal bimodule direct sum, but \eqref{eq:1047} and \eqref{eq:1048} are only module internal direct sums.
For every $\alpha \in A$, we also have the well-known and standard projection, 
$$
\pi_{\alpha} : \bigoplus_{\beta \in A} R_\beta \twoheadrightarrow R_{\alpha} 
$$
$$
\sum_{\beta \in A}r_\beta \xmapsto{\pi_{\alpha}} r_{\alpha}
$$
where $r_\beta \in R_\beta$, and inclusion maps
$$
\iota_{\alpha} :  R_{\alpha} \hookrightarrow  \bigoplus_{\beta \in A} R_\beta
$$
$$
r_{\alpha} \xmapsto{\iota_{\alpha}} r_\alpha 
$$
Here $\pi_\alpha$ and $\iota_\alpha$ are surjective and injective ring homomorphisms, respectively.
\newline \newline
From Proposition \ref{prop:1002} we can make any $R_\alpha$-module $M$ into an $R$-module via $\pi_\alpha$.  In particular, going forward this is the assumed action of $R$ on the left $R_\alpha$-module $R_\alpha e_{\alpha, i}$.
\newline \newline
It is clear from \eqref{eq:1046}, \eqref{eq:1049} and \eqref{eq:1050} that $\{ e_{\alpha,i} \}_{\alpha \in A, i \in I_\alpha}$ is a set of orthogonal idempotents in $R$. 
\newline \newline
For an arbitrary $\alpha \in A$, $r \in R$, and idempotent $e \in R_\alpha$,
$$
r = \sum_{\beta \in A}  r_\beta
$$
where all but finitely many $r_\beta$ are zero and $r_\beta \in R_\beta$.  Then $re = r_\alpha e \in R_\alpha e$.  So $Re \subseteq R_\alpha e$.  And since $R_\alpha \subseteq R$, $R_\alpha e \subseteq Re$.  Therefore,
\begin{equation} \label{eq:1063}
R e = R_\alpha e
\end{equation}
as abelian groups.  For arbitrary $r \in R$ and $s \in R_\alpha e$ the action is given by
\begin{equation} \label{eq:1065}
    r s = r_\alpha s = \pi_\alpha(r) s
\end{equation}
So the left action of $R$ on $Re=R_{\alpha}e$ by left multiplication gives the same module structure as the module structure on $R_{\alpha}e$ coming from the projection $\pi_{\alpha}:R \rightarrow R_{\alpha}$ via Prop \ref{prop:1002}.  So the submodules of $Re$ as an $R$-module are the same as the submodules of $R_\alpha e$ as an $R_\alpha$-module.  In particular, $Re$ is a left noetherian (artinian) $R$-module if and only if $R_\alpha e$ is a left noetherian (artinian) $R_\alpha$-module.
\newline
By symmetric arguments for an idempotent $e \in R_\alpha$
\begin{equation} \label{eq:1064}
eR  = e R_\alpha   
\end{equation}
and $eR$ is a right noetherian (artinian) $R$-module if and only if $e R_\alpha$ is a right noetherian (artinian) $R_\alpha$-module.
\newline \newline
Now we see from \eqref{eq:1046}, \eqref{eq:1047}, \eqref{eq:1048}, \eqref{eq:1063} and \eqref{eq:1064} that  
\begin{equation} \label{eq:1062} 
R = \bigoplus_{\alpha \in A} R_\alpha = \bigoplus_{\alpha \in A} \bigoplus_{i \in I_{\alpha}} R e_{\alpha, i} = \bigoplus_{\alpha \in A} \bigoplus_{i \in I_{\alpha}} e_{\alpha, i} R
\end{equation}
and by Proposition \ref{prop1}, $R$ is categorically left (right) noetherian (artinian).
\end{proof}
\begin{Rmk} \label{rmk:1003}
    As we will continue to make use of the following, we point out to the reader that \eqref{eq:1049} and \eqref{eq:1050} do not depend on the condition of the ring having enough idempotents.  This is, in fact, the rule for multiplication between ring elements when the ring has a bimodule decomposition into 2-sided ideals.  
\end{Rmk}
For an arbitrary ring $S$, not necessarily unital, we define $M_J(S)$ to be the ring of $J \times J$ matrices with entries in $S$ and only finitely many non-zero entries and $C_{J,p}(S)$ ($R_{J,p}(S)$) to be the set of $J \times J$ matrices with entries from $S$, only finitely many non-zero entries in column (row) $p$ and zero in all other columns (rows).
\newline \newline
When $S$ is a unital ring, we have the standard matrix units $ \{ E_{i,j} \}_{i,j, \in J}$ with $1_S$ in entry $i,j$ and $0$ elsewhere,
\begin{equation} \label{eq:1067}
\begin{aligned}
C_{J,p}(S) ={} & \{ M E_{p,p}: M \in M_J(S) \} = M_J(S) E_{p,p} \\
\end{aligned}
\end{equation}
and 
\begin{equation} \label{eq:1068}
\begin{aligned}
R_{J,p}(S) ={} & \{ E_{p,p}M : M \in M_J(S) \} = E_{p,p} M_J(S) \\
\end{aligned}
\end{equation}
\begin{Lemma} \label{thm:1009}
Let $S$ be a unital ring, when $C_{J,p}(S)$ is viewed as a left $M_J(S)$-module, then any left $M_J(S)$-submodule $N \subseteq C_{J,p}(S)$ has the form $C_{J,p}(\mathfrak{A})$ for a uniquely determined left ideal $\mathfrak{A}$ of $S$.  In particular, there is an inclusion-preserving bijection between the left $M_J(S)$-submodules of $C_{J,p}(S)$ and the left ideals of $S$.  The dual result  holds for $R_{J,p}(S)$.
\end{Lemma}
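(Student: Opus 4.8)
The plan is to construct the left ideal $\mathfrak{A}$ explicitly from the entries of the matrices in $N$, and to carry out every manipulation through the matrix units $E_{i,j} \in M_J(S)$ — these exist because $S$ is unital, even though $M_J(S)$ itself need not be unital when $J$ is infinite. The two facts I would lean on throughout are the identity $E_{i,q} M = M_{q,p} E_{i,p}$ for $M \in C_{J,p}(S)$ (left multiplication by a matrix unit relocates a single entry of a column-supported matrix), and the decomposition $M = \sum_{q \in J} M_{q,p} E_{q,p}$ of an arbitrary $M \in C_{J,p}(S)$ into a finite sum of rank-one matrices.

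The first step is: given a left $M_J(S)$-submodule $N \subseteq C_{J,p}(S)$, set
\[
\mathfrak{A} = \{\, s \in S : sE_{q,p} \in N \text{ for every } q \in J \,\}.
\]
Using $E_{i,q}(sE_{q,p}) = sE_{i,p}$ one checks that membership ``for every $q$'' is equivalent to membership ``for some $q$''. Then from $sE_{q,p} + tE_{q,p} = (s+t)E_{q,p}$ and $(rE_{q,q})(sE_{q,p}) = (rs)E_{q,p}$, both of which lie in $N$ whenever $s,t \in \mathfrak{A}$ and $r \in S$, it follows that $\mathfrak{A}$ is a left ideal of $S$.

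The second step is to show $N = C_{J,p}(\mathfrak{A})$. If $M \in N$ then $E_{q,q}M = M_{q,p}E_{q,p} \in N$ for each $q$, so every entry $M_{q,p}$ lies in $\mathfrak{A}$, i.e.\ $M \in C_{J,p}(\mathfrak{A})$; conversely if $M \in C_{J,p}(\mathfrak{A})$ then each $M_{q,p}E_{q,p} \in N$ and $M$ is their finite sum, so $M \in N$. For uniqueness, observe that $\mathfrak{A}$ is recovered from $C_{J,p}(\mathfrak{A})$ as precisely the set of scalars occurring as entries of its members (each $s \in \mathfrak{A}$ occurs as the $(q,p)$-entry of $sE_{q,p}$, and no others occur). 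Finally, for the bijection one verifies that $\mathfrak{A} \mapsto C_{J,p}(\mathfrak{A})$ does land in left $M_J(S)$-submodules: for $M \in M_J(S)$ and $A \in C_{J,p}(\mathfrak{A})$, the product $MA$ is supported in column $p$ and $(MA)_{a,p} = \sum_c M_{a,c}A_{c,p}$ is a finite sum of elements of the left ideal $\mathfrak{A}$, hence lies in $\mathfrak{A}$. This map is then two-sided inverse to $N \mapsto \mathfrak{A}$ by the first two steps, and it is inclusion-preserving in both directions thanks to the entrywise description of $\mathfrak{A}$. The statement for $R_{J,p}(S)$ is the mirror image: transpose everything, replacing left modules and left ideals by right ones (equivalently, apply the result over $S^{\mathrm{op}}$).

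I do not expect any real difficulty beyond bookkeeping. The one point that genuinely requires care is that $M_J(S)$ is generally non-unital, so the argument cannot invoke an identity matrix and must instead route everything through the matrix units $E_{i,j}$ — which is exactly where the hypothesis that $S$ is unital is used — and one must track finite supports so that every sum appearing in the proof is actually finite.
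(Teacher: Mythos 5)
Your proposal is correct and follows essentially the same route as the paper's proof: both extract the ideal $\mathfrak{A}$ from the entries of matrices in $N$ (your definition via $sE_{q,p}\in N$ coincides with the paper's ``set of $(p,p)$-entries'' definition, since $E_{p,p}M = m_{p,p}E_{p,p}$), and both establish $N = C_{J,p}(\mathfrak{A})$ by the same matrix-unit manipulations, with your finite decomposition $M=\sum_q M_{q,p}E_{q,p}$ being a mildly cleaner packaging of the paper's step showing $a_{i,p}E_{i,p}=E_{i,p}M\in N$.
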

\begin{proof}
This proof is similar to Theorem 3.1 in \cite{LamBook}.  Clearly, if $\mathfrak{A}$ is a left ideal of $S$, then $C_{J,p}(\mathfrak{A})$ is a left $M_J(S)$-module.  Whenever $\mathfrak{A}$ and $\mathfrak{B}$ are left ideals of $R$, it is clear that $\mathfrak{A} \subseteq \mathfrak{B}$ if and only if $C_{J,p}(\mathfrak{A}) \subseteq C_{J,p}(\mathfrak{B})$.
\newline \newline
Let $N \le C_{J,p}(S)$ = $M_J(S)E_{p,p}$ be an arbitrary submodule of $C_{J,p}(S)$  and let $\mathfrak{A}$ be the set of all $(p,p)$-entries of matrices in $N$.  The $\{ E_{i,j} \}_{i,j \in J}$ form a basis for $M_J(S)$ as a left $S$-module.  $\mathfrak{A}$ is easily seen to be a left ideal of $S$.  For arbitrary $x_1, x_2 \in \mathfrak{A}$ and arbitrary $r \in S$, $x_1, x_2$ must be the $(p,p)$-entries of matrices $M_1, M_2 \in N$, respectively and $r E_{p,p} M_1 - M_2$ is also a matrix in the submodule $N$  and the $(p,p)$-entry is $rx_1-x_2$.  So we can easily see that $\mathfrak{A}$ is a left ideal of $S$.  All that remains is for us to show that $N = C_{J,p}(\mathfrak{A})$.
\newline \newline
For any matrix $M = (m_{i,j})$ we have the following identities
\begin{equation} \label{eq:1}
    ME_{k,l}= 
\begin{cases}
    \text{matrix with 0 in all entries except column $l$} \\
    \text{and column $l$ equals column $k$ from matrix } M \\
\end{cases}
\end{equation}
\begin{equation} \label{eq:2}
    E_{i,j}M= 
\begin{cases}
    \text{matrix with 0 in all entries except row $i$} \\
    \text{and row $i$ equals row $j$ from matrix } M \\
\end{cases}
\end{equation}
\begin{equation} \label{eq:3}
    E_{i,j}ME_{k,l} = m_{j,k}E_{i,l}
\end{equation}
where $\{ E_{i,j} \}_{i,j \in J}$ are the matrix units.  
For any $M \in N$, we know that $M = ME_{p,p}$.  So if we let $i = p$ in \eqref{eq:2} and \eqref{eq:3} and implicitly $k = l = p$ in \eqref{eq:3}, we see that $E_{p,j}M = E_{p,j}ME_{p,p}= m_{j,p}E_{p,p} \in N$  which implies $m_{j,p} \in \mathfrak{A}$ for all $j \in J$.  Also it is obvious that $m_{j,k} = 0 \in \mathfrak{A}$ whenever $k \ne p$.  So $N \subseteq C_{J,p}(\mathfrak{A})$.  
\newline \newline
We now prove $C_{J,p}(\mathfrak{A}) \subseteq N$.  Take any $(a_{i,j}) \in C_{J,p}(\mathfrak{A})$.  So $a_{i,j}=0$ whenever $j \ne p$, it is sufficient for us to show that $a_{i,p}E_{i,p} \in N$ for any $i \in J$.  By construction of $\mathfrak{A}$ there exists a matrix $(m_{i,j})=M \in N \subseteq C_{J,p}(\mathfrak{A})$ such that $a_{i,p}=m_{p,p}$.  Then we have $m_{p,p} \in \mathfrak{A}$ and 
$$
a_{i,p}E_{i,p}= m_{p,p}E_{i,p}=E_{i,p}M \in N
$$
Thus we have our desired result, including an explicit inclusion-preserving bijection between left $M_J(S)$-submodules of $C_{J,p}(S)$ and left ideals of $S$. 
\newline \newline
The proof for right $M_J(S)$-submodules of $R_{J,p}(S)$ and right ideals of $S$ is dual.
\end{proof}
\begin{Thm} \label{thm:1005}
Let $S$ be a unital ring and $M_J(S)$ denote $J \times J$ matrices with entries in S and only finitely many non-zero entries.  If $S$ is left (respectively right) noetherian/artinian if and only if $M_J(S)$ is categorically left (respectively right) noetherian/artinian.
\end{Thm}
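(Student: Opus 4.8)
The plan is to deduce the statement directly from Proposition~\ref{prop1} together with Lemma~\ref{thm:1009}. First I would record that $M_J(S)$ has enough idempotents in the sense of Definition~\ref{def:0002}: the diagonal matrix units $\{E_{p,p}\}_{p\in J}$ are mutually orthogonal idempotents, and since every matrix in $M_J(S)$ has only finitely many non-zero rows (resp.\ columns), one has $M=\sum_p E_{p,p}M=\sum_p ME_{p,p}$ with finite sums, so
\begin{equation*}
M_J(S)=\bigoplus_{p\in J}E_{p,p}M_J(S)=\bigoplus_{p\in J}M_J(S)E_{p,p}
\end{equation*}
as module direct sum decompositions; in the notation introduced just before Lemma~\ref{thm:1009} these are $\bigoplus_p R_{J,p}(S)$ and $\bigoplus_p C_{J,p}(S)$.

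Next I would apply Proposition~\ref{prop1} to $R=M_J(S)$ with the set of idempotents $\{E_{p,p}\}_{p\in J}$: the ring $M_J(S)$ is categorically left noetherian (resp.\ artinian) if and only if $C_{J,p}(S)=M_J(S)E_{p,p}$ is a noetherian (resp.\ artinian) left $M_J(S)$-module for every $p\in J$. By Lemma~\ref{thm:1009}, for each fixed $p$ there is an inclusion-preserving bijection between the left $M_J(S)$-submodules of $C_{J,p}(S)$ and the left ideals of $S$. An inclusion-preserving bijection of posets transports both the ascending and the descending chain condition in either direction, so $C_{J,p}(S)$ is a noetherian (resp.\ artinian) left $M_J(S)$-module if and only if $S$ satisfies the ascending (resp.\ descending) chain condition on left ideals, i.e.\ $S$ is left noetherian (resp.\ artinian). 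This criterion on $S$ does not involve $p$, so chaining the two equivalences gives that $M_J(S)$ is categorically left noetherian (resp.\ artinian) if and only if $S$ is left noetherian (resp.\ artinian). For the right-handed version I would argue dually, using Remark~\ref{rmk:1001} (the right-module form of Proposition~\ref{prop1}) applied to the decomposition $M_J(S)=\bigoplus_p R_{J,p}(S)=\bigoplus_p E_{p,p}M_J(S)$ and the dual half of Lemma~\ref{thm:1009}, which matches right $M_J(S)$-submodules of $R_{J,p}(S)$ with right ideals of $S$; the same chain-condition transport argument then closes the right case.

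I do not expect a serious obstacle: the substantive content — identifying submodules of the column module $C_{J,p}(S)$ with one-sided ideals of $S$ — is already contained in Lemma~\ref{thm:1009}, and the reduction of the categorical chain conditions (which quantify over all finitely generated modules, per Definition~\ref{def:0006}) to ordinary chain conditions on the summands $Re$ is exactly Proposition~\ref{prop1}, so finite generation never has to be handled by hand. The only points needing care are (i) verifying that the diagonal matrix units genuinely give the ``enough idempotents'' decomposition, which is where the finite-support convention in the definition of $M_J(S)$ is used, and (ii) noting that an inclusion-preserving bijection of posets preserves ACC and DCC, so the chain conditions pass between $S$ and each $C_{J,p}(S)$ (resp.\ $R_{J,p}(S)$) in both directions. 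A closing remark that the answer is independent of $p$ (indeed all the $C_{J,p}(S)$ are isomorphic as $M_J(S)$-modules, though this is not needed) would round off the argument.
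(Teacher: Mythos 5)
Your proposal is correct and follows essentially the same route as the paper: decompose $M_J(S)$ via the orthogonal idempotents $\{E_{p,p}\}_{p\in J}$ into the column modules $C_{J,p}(S)=M_J(S)E_{p,p}$ (resp.\ row modules $R_{J,p}(S)$), use Lemma~\ref{thm:1009} to identify their submodule lattices with the one-sided ideal lattice of $S$, and invoke Proposition~\ref{prop1} to pass between the categorical chain condition on $M_J(S)$ and the chain condition on each summand. Your write-up is if anything slightly cleaner in making explicit that the bijection of Lemma~\ref{thm:1009} is an order isomorphism (so it transports ACC and DCC in both directions) and that the criterion on $S$ is independent of $p$.
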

\begin{proof}
($\Rightarrow$) Lemma \ref{thm:1009} implies that $C_{J,p}(S)$  $(R_{J,p}(S))$ is left (right) noetherian/artinian if and only if $S$ is left (right) noetherian/artinian.
We observe that $\{ E_{p,p} \}_{p \in J}$ form a collection of orthogonal idempotents in $M_J(S)$,
\begin{equation}\label{eq:1069}
    \bigoplus_{p \in J} C_{J,p}(S) = \bigoplus_{p \in J}M_J(S)E_{p,p} = M_J(S) =   \bigoplus_{p \in J}E_{p,p}M_J(S) = \bigoplus_{p \in J} R_{J,p}(S)
\end{equation}
i.e., $\{ E_{p,p} \}_{p \in J}$ form a collection of enough idempotents in $M_J(S)$.
\newline
($\Leftarrow$) $C_{J,p}(S) = M_J(S)E_{p,p}$  $(R_{J,p}(S) = E_{p,p}M_J(S))$ is a finitely generated left (right) $M_J(S)$-module and hence left (right) noetherian/artinian.  Again by Lemma \ref{thm:1009}, $(R_{J,p}(S))$ is left (right) noetherian/artinian if and only if $S$ is left (right) noetherian/artinian.
The result now follows from Proposition \ref{prop1}.
\end{proof}
\begin{Thm} \label{thm:1010}
Let $S_\alpha$ be unital left (right) noetherian (artinian) rings for all $\alpha \in A$, then $\bigoplus_{\alpha \in A} M_{J_\alpha}(S_\alpha)$ is categorically left (right) noetherian (artinian).
\end{Thm}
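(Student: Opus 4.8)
The plan is to obtain this theorem by concatenating the two preceding results, Theorem \ref{thm:1005} and Theorem \ref{thm:1008}, so almost no new work is required. First I would fix $\alpha \in A$ and invoke Theorem \ref{thm:1005}: since $S_\alpha$ is a unital left (right) noetherian (artinian) ring, $M_{J_\alpha}(S_\alpha)$ is categorically left (right) noetherian (artinian).

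Next I would check that each $M_{J_\alpha}(S_\alpha)$ has enough idempotents in the sense of Definition \ref{def:0002}, since this is the hypothesis needed to feed into Theorem \ref{thm:1008}. This is exactly what equation \eqref{eq:1069} in the proof of Theorem \ref{thm:1005} records: the diagonal matrix units $\{ E_{p,p} \}_{p \in J_\alpha}$ are mutually orthogonal idempotents and $M_{J_\alpha}(S_\alpha) = \bigoplus_{p \in J_\alpha} M_{J_\alpha}(S_\alpha) E_{p,p} = \bigoplus_{p \in J_\alpha} E_{p,p} M_{J_\alpha}(S_\alpha)$ as module direct sums, the point being that a matrix in $M_{J_\alpha}(S_\alpha)$ has only finitely many nonzero entries and hence lies in a finite sub-sum.

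Finally I would set $R_\alpha = M_{J_\alpha}(S_\alpha)$. Each $R_\alpha$ has enough idempotents and, by the first step, is categorically left (right) noetherian (artinian), so Theorem \ref{thm:1008} applies verbatim and gives that $R = \bigoplus_{\alpha \in A} R_\alpha = \bigoplus_{\alpha \in A} M_{J_\alpha}(S_\alpha)$ is categorically left (right) noetherian (artinian), which is the assertion.

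There is no real obstacle here; the only spot deserving a moment's attention is verifying the ``enough idempotents'' hypothesis of Theorem \ref{thm:1008}, and \eqref{eq:1069} supplies it directly. As an alternative that bypasses Theorem \ref{thm:1008}, one could argue straight from Proposition \ref{prop1}: the set $\{ E_{p,p} : \alpha \in A,\ p \in J_\alpha \}$ is a family of orthogonal idempotents of $R$ with $R = \bigoplus R E_{p,p}$; each $R E_{p,p}$ equals $C_{J_\alpha,p}(S_\alpha)$ and, by Proposition \ref{prop:1002}, has the same submodule lattice whether viewed over $R$ or over $M_{J_\alpha}(S_\alpha)$; Lemma \ref{thm:1009} shows this lattice is isomorphic to the lattice of left ideals of $S_\alpha$, hence noetherian (artinian); and Proposition \ref{prop1} then concludes. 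I would present the first route as the cleaner one and mention the second only if brevity is a concern.
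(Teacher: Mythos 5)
Your proposal is correct and matches the paper's proof, which simply cites Theorems \ref{thm:1008} and \ref{thm:1005} in exactly this combination; your verification of the ``enough idempotents'' hypothesis via \eqref{eq:1069} just makes explicit what the paper leaves implicit.
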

\begin{proof}
Follows immediately from Theorems \ref{thm:1008} and \ref{thm:1005}.
\end{proof}
The following result is stated as Lemmas 1.5 and 1.6 in \cite{AbramsetalchaincondLPA}; however, a proof is only given for the  categorically left (right) artinian implies locally left (right) artinian  case.  We state the result here with proofs for both the artinian and noetherian settings.
\begin{Thm} \label{thm:1011}
Let $S$ be a ring with local units.  If $S$ is categorically left (right) noetherian/artinian then $S$ is locally left (right) noetherian/artinian.
\end{Thm}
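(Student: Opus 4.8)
The plan is to fix an arbitrary idempotent $e \in S$ and show that the corner ring $eSe$ (which is unital with identity $e$) satisfies the ascending (resp.\ descending) chain condition on left ideals. The leverage point is that $Se$ is a cyclic left $S$-module, generated by $e$, hence finitely generated; so by the hypothesis that $S$ is categorically left noetherian (resp.\ artinian), $Se$ is a noetherian (resp.\ artinian) left $S$-module. It therefore suffices to exhibit an inclusion-preserving injection from the lattice of left ideals of $eSe$ into the lattice of left $S$-submodules of $Se$.

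First I would record the elementary bookkeeping: if $\mathfrak{A}$ is a left ideal of $eSe$, then $\mathfrak{A} \subseteq eSe$, so $e\mathfrak{A} = \mathfrak{A} = \mathfrak{A}e$, and $(eSe)\mathfrak{A} \subseteq \mathfrak{A}$. Define $\Phi(\mathfrak{A}) = S\mathfrak{A}$; since $S\mathfrak{A} \subseteq S(eSe) \subseteq Se$, this is a left $S$-submodule of $Se$, and $\Phi$ is visibly inclusion-preserving. The key computation is $e\,\Phi(\mathfrak{A}) = eS\mathfrak{A} = eS(e\mathfrak{A}) = (eSe)\mathfrak{A} \subseteq \mathfrak{A}$, where the last inclusion uses that $\mathfrak{A}$ is a left ideal of $eSe$; conversely $\mathfrak{A} = e\mathfrak{A} \subseteq eS\mathfrak{A}$ because $e \in eS$. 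Hence $e\,\Phi(\mathfrak{A}) = \mathfrak{A}$, so $N \mapsto eN$ is a left inverse for $\Phi$, and in particular $\Phi$ is injective (one checks along the way that $eN$ is indeed a left ideal of $eSe$, since $eN \subseteq eSe$ and $(eSe)(eN) \subseteq e(SN) \subseteq eN$).

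Then I would conclude: given an ascending (resp.\ descending) chain $\mathfrak{A}_1 \subseteq \mathfrak{A}_2 \subseteq \cdots$ (resp.\ $\supseteq$) of left ideals of $eSe$, the chain $\Phi(\mathfrak{A}_1) \subseteq \Phi(\mathfrak{A}_2) \subseteq \cdots$ of left $S$-submodules of $Se$ stabilizes by the noetherian (resp.\ artinian) property of $Se$; applying $e(-)$ and using $e\,\Phi(\mathfrak{A}_n) = \mathfrak{A}_n$ recovers stabilization of the original chain. Hence $eSe$ is left noetherian (resp.\ artinian), and since $e$ was arbitrary, $S$ is locally left noetherian (resp.\ artinian). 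The right-handed statement is entirely dual: replace $Se$ by the cyclic right $S$-module $eS$, use $\mathfrak{A} \mapsto \mathfrak{A}S$ on right ideals of $eSe$, with left inverse $N \mapsto Ne$.

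I do not anticipate a genuine obstacle; the argument is short and formal. The only step demanding minor care is verifying $e\,\Phi(\mathfrak{A}) = \mathfrak{A}$, which rests entirely on the idempotent identities $e\mathfrak{A} = \mathfrak{A} = \mathfrak{A}e$ for a left ideal of $eSe$ and on the fact that $Se$ genuinely is finitely generated over $S$ — both immediate once spelled out. It is worth noting that, unlike the source reference which treats only the artinian case, this argument handles the noetherian and artinian cases simultaneously, since it passes through an order-embedding of lattices rather than through any artinian-specific device.
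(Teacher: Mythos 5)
Your proof is correct and follows essentially the same route as the paper: both arguments reduce the chain condition on the corner $eSe$ to the noetherian/artinian property of the finitely generated (cyclic) unitary $S$-module $Se$ via an inclusion-preserving injection from left ideals of $eSe$ into $S$-submodules of $Se$. The only difference is that you construct that injection explicitly ($\mathfrak{A}\mapsto S\mathfrak{A}$ with left inverse $N\mapsto eN$), whereas the paper cites it as Theorem 21.11 of Lam's book; your computation is a correct rendering of that cited result.
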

\begin{proof}
We will prove the \textit{left} version of this theorem, the proof of the \textit{right} version is dual.  The result follows from Theorem 21.11 of \cite{LamBook} which states that for any ring $S$ there is an inclusion preserving injection $\phi$ from left $eSe$-submodules of $eSe$ into left $S$-submodules of $Se$ (This theorem is stated for arbitrary rings with unit; however, the proof of this result does not require the ring to have a unit and therefore, applies to our setting.)  The ring $S$ must be locally left noetherian/artinian.  Otherwise for some corner $eSe$ there will be a strictly ascending/descending chain of submodules and $\phi$ would take this chain of submodules to a strict chain of $S$-submodules of $Se$, where $Se$ is a finitely generated unitary $S$-module.  This is impossible, since by assumption our ring is categorically left noetherian/artinian, and any ascending/descending chain of $S$-submodules of $Se$ must stabilize.
\end{proof}
\subsection{Semisimplicity and Local Semisimplicity Results}
\begin{Def} [Minimal and simple modules/ideals]
Let $M$ be a non-zero left (right) $R$-module.  We say that $M$ is \textit{minimal} left (right) module if it contains no proper non-zero left (right) submodules.  A \textit{simple left (right) module} is a minimal left (right) module that is not annihlated by $R$.  A left (right) ideal $I$ of $R$ is minimal/simple if it is minimal/simple as an $R$-module.
\end{Def}
Equivalently, we can define simple left (right) modules to be minimal left (right) modules which are unitary.  Consequently, semisimple modules must also be unitary.  The notion of a \textit{left (right) semisimple} module will be familiar to the reader as a module that is the direct sum of simple modules and a \textit{left (right) semsimple ring} is a ring with local units that is semisimple as a left (right) module over itself.  Frequently, we make use of the equivalence of sums of simple modules and direct sums of simple submodules in proving semisimplicity.  The statement of this equivalence is found in the standard literature for unital rings, but the proof does not require the ring to be unital.  We further note that for rings with local units, all left/right ideals are unitary; and, in particular, all minimal left/right ideals are simple.
\begin{Rmk} \label{rmk:2007}
It is known to experts that left and right semisimple are equivalent even in the non-unital case, thus we omit the adjectives left and right when referring to semsimplicity.
\end{Rmk}
\begin{Def} \label{def:0011}  [Locally semisimple] \label{def:0010}  A ring $R$ with local units is \textit{locally semisimple} if for every idempotent $e \in R$, $eRe$ is a semisimple ring.
\end{Def}
\begin{Prop} \label{prop:1005}  
Locally semisimple implies locally artinian.
\end{Prop}  
\begin{Prop} \label{thm:1016}
Let $A$ be an arbitrary index set and $S_\alpha$ a ring with local units for all $\alpha \in A$, then $S = \oplus_{\alpha \in A} S_\alpha$ is a ring with local units.
\end{Prop}
\begin{proof}
    Since each $S_\alpha$ has local units, then $S$ has local units.  
    Let 
    $$
        s = \sum_{\alpha \in A} s_{\alpha} \text{ and } t = \sum_{\alpha \in A} t_{\alpha}         
    $$
    The following algorithm describes our construction of a local unit for both $s$ and $t$.  The construction is a finite process over the indices $\alpha \in A$ with non-zero values in either $s_{\alpha}$ or $t_{\alpha}$.  Let $\alpha \in I \subseteq A$ be the indices for which either $s_{\alpha} \ne 0$ or $t_{\alpha} \ne 0$. Then for $\alpha \in I$, we know that there is an idempotent $e_{\alpha} \in S_{\alpha}$ such that 
    $$
    s_{\alpha} e_{\alpha} = s_{\alpha} = e_{\alpha} s_{\alpha} 
    $$
    \text{ and } 
    $$t_{\alpha} e_{\alpha} = t_{\alpha} = e_{\alpha} t_{\alpha}.$$
Let $e = \sum_{\alpha \in I} e_{\alpha}$, then it is clear (from  \eqref{eq:1049} / \eqref{eq:1050} and Remark \ref{rmk:1003}) that
$$
se = s = es \text{ and } te = t = et.
$$
\end{proof}
\begin{Prop} \label{prop:1009}  Suppose $R$ is a ring (not necessarily unital).  If $M$ is a finitely generated semisimple left (right) $R$-module, then $End_R(M)$ is a semisimple unital ring. 
\end{Prop}  
\begin{proof}
Since $M$ is finitely generated and semisimple, $M = \bigoplus_{i=1}^k  M_i^{n_i}$, where the $M_i$’s are non-isomorphic simple left $R$-modules.  Then \begin{equation} \label{eq:2015}
End_R(M) \cong [Hom_R(M_i^{n_i}, M_j^{n_j})] =  \prod_{i=1}^k  M_{n_i}(D_i) 
\end{equation}
where $f \in End_R(M)$ is mapped to the matrix $[f_{i,j}]$, $f_{i,j} \in Hom_R(M_i^{n_i}, M_j^{n_j})$ acting on $M$ from the right.   By Schur's Lemma, $D_i \cong End_R(M_i)$ is a division ring and $Hom_R(M_i^{n_i}, M_j^{n_j})=\{0 \}$ for $i \ne j$.  We see that $End_R(M)$ is semisimple by the Wedderburn-Artin theorem.  The proof for the right semisimple hypothesis requires analogous modifications to the preceding.
\end{proof}
\begin{Prop} \label{prop:1006a}
If $S$ is a ring which is semisimple as a left (right) $S$-module, then every corner is semsisimple.  In particular, if $S$ is a semisimple ring, then $S$ is locally semisimple.
\end{Prop}
\begin{proof}
We will prove the statement for the left semisimple hypothesis, the analogous statement can be with proved with syntactical changes for right semisimplicity.  For any ring $S$ and idempotent $e \in S$, $Se$ is a left submodule of $S$ over $S$.  Since $S$ is left semisimple, so is $Se$ and $End_S(Se) \cong eSe$.  Since $Se$ is a finitely generated left $S$-module and left semisimple as an $S$-module and along with Proposition \ref{prop:1009}, we conclude that $End_S(Se) \cong eSe$ [Proposition 21.6 \cite{LamBook}].
\end{proof}
\begin{Rmk} \label{rmk:2006}
    It is known that the converse of Proposition \ref{prop:1006a} holds, but we will not prove this here.
\end{Rmk}
\begin{Prop} \label{thm:1015}  Let $S = \oplus_{\alpha \in J} S_\alpha$ where each $S_\alpha$ is a ring not necessarily unital and $I \subseteq S_\beta$ for some $\beta \in J$.  $I$ is a left (right) ideal of $S$ if and only if $I$ is a left (right) ideal of $S_\beta$.  In particular, $I$ is a minimal left (right) ideal of $S$ if and only if $I$ is a minimal left (right) ideal of $S_{\beta}$.
\end{Prop}
\begin{proof}
For arbitrary $\beta \in J$, let $I \subseteq S_\beta$. If $I$ is an ideal of any ring it is an abelian group.  So in both directions we need only check that $I$ is closed under the multiplicative action of $S$ or $S_\beta$ where appropriate.  \newline
($\Leftarrow$)  Let $I$ be a left (right) ideal  of $S_\beta$.  For an arbitrary $m \in I \unlhd S_\beta$ and $s \in S$, we have the following unique decomposition
\begin{equation} \label{eq:1071}
    s = \sum_{\alpha \in J} s_\alpha
\end{equation}
where $s_\alpha \in S_\alpha$ and all but finitely many of the $s_\alpha$ are zero.  Since $m \in I \unlhd S_{\beta}$ by the same reasoning from Equation \eqref{eq:1049}, $sm = s_{\beta} m \in I $. \newline
($\Rightarrow$)  This direction is obvious, since $S_\beta$ is a subring of $S$.  Therefore, the closure of $I$ under the multiplicative action of $S$ gives us the closure of $I$ under the multiplicative action of $S_\beta$.  
\end{proof}
\begin{Prop} \label{thm:1014}
Let $A$ be an arbitrary index set and $S_\alpha$ a semisimple ring for all $\alpha \in A$, then $S = \oplus_{\alpha \in A} S_\alpha$ is semisimple.
\end{Prop}
\begin{proof}
Since each $S_\alpha$ is semisimple, $S_\alpha = \sum_{i \in J_\alpha} L_{\alpha,i}$  where each $L_{\alpha,i}$ is a minimal left ideal of $S_\alpha$.  Since by Proposition \ref{thm:1015}, each $L_{\alpha, i}$ is also a minimal left ideal of $S$ we have $S = \sum_{\alpha \in J} \sum_{i \in J_\alpha} L_{\alpha,i}$.  Each $S_{\alpha}$ has local units, therefore, by Proposition \ref{thm:1016}, $S$ has local units, and is therefore, semisimple.
\end{proof}
The following propositions are well-known results for the case where the indices are finite, and we extend them to the case of arbitrary indices.  
\begin{Prop} \label{thm:1013}
               Let $S$ be a unital ring, $J$ an arbitrary index set, and $M_J(S)$ the set of all matrices with entries in $S$ with finitely many non-zero entries in every row and column.  Then $S$ is semisimple if and only if $M_J(S)$ is semisimple.
\end{Prop}
\begin{proof}
Almost completely follows from Lemma \ref{thm:1009}.  Since $S$ is a semisimple ring, 
\begin{equation} \label{eq:2018}
S = \bigoplus_{i \in A} I_i
\end{equation}
where each $I_i$ is a minimal left ideal of $S$.  The left $M_J(S)$-submodules of $C_{J,p}(S)$ and the left ideals of $S$ are in an order preserving bijection, so 
\begin{equation} \label {eq:2019}
    C_{J,p}(S) = \sum_{i \in A} C_{J,p}(I_i) 
\end{equation}
where each $I_{i}$ is a minimal left ideal of $S$, and therefore, each $C_{J,p}(I_i)$ is a minimal left $M_J(S)$-submodule of $C_{J,p}(S)$.  Since $S$ is a unital ring and $M_J(S)$ is a ring with local units, the minimal left ideals of $M_J(S)$ are also simple left ideals.
It is clear from \eqref{eq:1067}, \eqref{eq:1068}, \eqref{eq:1},  \eqref{eq:2}, and \eqref{eq:2019} that 
\begin{equation} \label{eq:2016}
M_J(S) = \bigoplus_{p \in J} C_{J,p}(S) = \bigoplus_{p \in J} \sum_{i \in A} C_{J,p}(I_i) 
\end{equation}
as a direct sum of left modules.  
\newline
($\Leftarrow$)  $S$ is isomorphic to a corner $E_{p,p}M_J(S)E_{p,p}$ for any $p \in J$.  The result follows from Proposition \ref{prop:1006a}.
\end{proof}
\section{Main Result}
\begin{Thm} \label{thm:1001} If $R$ is a commutative ring with unit and $\mathscr G$ is an ample groupoid then the following are equivalent:
\begin{enumerate}
  \item \label{thm:1001_01} $R \mathscr{G}$ is categorically noetherian (artinian).
  \item \label{thm:1001_02} $R \mathscr{G}$ is locally noetherian (artinian).
  \item \label{thm:1001_03} $\mathscr{G}$ is discrete and $R \mathscr{G}_x^x$ is noetherian (artinian) for all isotropy groups $\mathscr{G}_x^x$.
  \item \label{thm:1001_04} $R \mathscr{G} \cong \bigoplus _{\alpha \in J} M_{\mathscr{O}_{\alpha}}(R \mathscr{G} _{x_\alpha}^{x_\alpha})$ where $x_\alpha$, $\alpha \in J$, represent the orbits, $\mathscr{O}_{\alpha}$ is the orbit of $x_\alpha$, $\mathscr{G}_{x_\alpha}^{x_\alpha}$ are the isotropy groups at $x_\alpha$, and $R \mathscr{G}_{x_\alpha}^{x_\alpha}$ are noetherian (artinian). In the case where the $R \mathscr{G}_{x_\alpha}^{x_\alpha}$ are artinian, the isotropy groups are finite and $R$ is artinian.
\end{enumerate}
\end{Thm}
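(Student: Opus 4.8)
The plan is to establish the cycle $(\ref{thm:1001_01}) \Rightarrow (\ref{thm:1001_02}) \Rightarrow (\ref{thm:1001_03}) \Rightarrow (\ref{thm:1001_04}) \Rightarrow (\ref{thm:1001_01})$, running the noetherian and the artinian versions in parallel and dropping the ``left/right'' qualifier throughout as licensed by Remark \ref{rmk:0008}: $R\mathscr{G}$ carries an involution, and each isotropy group ring $R\mathscr{G}_x^x$ is anti-isomorphic to itself via $g \mapsto g^{-1}$ (here $R$ is commutative), so for every ring in play the left and right chain conditions coincide.

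Two of the implications are essentially citations. Since $R\mathscr{G}$ has local units by Proposition \ref{prop:1004}(\ref{prop:1004_4}), the implication $(\ref{thm:1001_01}) \Rightarrow (\ref{thm:1001_02})$ is exactly Theorem \ref{thm:1011}. For $(\ref{thm:1001_04}) \Rightarrow (\ref{thm:1001_01})$, every $R\mathscr{G}_{x_\alpha}^{x_\alpha}$ is a \emph{unital} ring, so the hypothesis that these rings are noetherian (artinian) lets Theorem \ref{thm:1010} apply verbatim to $\bigoplus_{\alpha\in J} M_{\mathscr{O}_\alpha}(R\mathscr{G}_{x_\alpha}^{x_\alpha}) \cong R\mathscr{G}$, yielding categorical noetherianity (artinianity). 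For $(\ref{thm:1001_03}) \Rightarrow (\ref{thm:1001_04})$: when $\mathscr{G}$ is discrete, each orbit is a clopen invariant subset of $\mathscr{G}^{(0)}$, so Lemma \ref{thm:0004} supplies the matrix decomposition with the $R\mathscr{G}_{x_\alpha}^{x_\alpha}$ noetherian (artinian) by hypothesis; in the artinian case Connell's Theorem \ref{thm:0010}, applied to each group ring $R\mathscr{G}_{x_\alpha}^{x_\alpha}$, forces $R$ artinian and every isotropy group finite, which is the final assertion of (\ref{thm:1001_04}).

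The substantive step is $(\ref{thm:1001_02}) \Rightarrow (\ref{thm:1001_03})$. Fix a compact open set $U \subseteq \mathscr{G}^{(0)}$. By Proposition \ref{thm:0009} the function $\chi_U$ is an idempotent of $R\mathscr{G}$, and by Proposition \ref{prop:1004}(\ref{prop:1004_3}) the corner $\chi_U R\mathscr{G}\chi_U \cong R\mathscr{G}{\restriction}_U$ is a unital ring; local noetherianity makes it noetherian, and local artinianity makes it artinian and hence (Hopkins--Levitzki Theorem \ref{thm:2019}) noetherian as well. I would then check that $V \mapsto \chi_V R\mathscr{G}{\restriction}_U$ carries the compact open subsets of $U$ inclusion-preservingly to left ideals, and is \emph{strict}: if $\chi_W = \chi_V \ast f$ then, since $\supp(\chi_V \ast f) \subseteq \{\, g : r(g) \in V \,\}$ and $r$ is the identity on $\mathscr{G}^{(0)}$, one gets $W \subseteq V$. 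Hence $U$ satisfies the ascending chain condition on compact open subsets, so $U$ is finite by Lemma \ref{thm:0001}. As $\mathscr{G}^{(0)}$ is Hausdorff with a basis of compact open sets, every point then has a finite, hence discrete, open neighborhood, so $\mathscr{G}^{(0)}$ is discrete and, by Remark \ref{rmk:0007}, so is $\mathscr{G}$. Finally, discreteness makes each singleton $\{x\} \subseteq \mathscr{G}^{(0)}$ compact open, so $R\mathscr{G}_x^x \cong \chi_{\{x\}} R\mathscr{G}\chi_{\{x\}}$ is a corner of $R\mathscr{G}$ and therefore noetherian (artinian) by hypothesis, which completes (\ref{thm:1001_03}).

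I expect the only real obstacle to be this strictness check in $(\ref{thm:1001_02}) \Rightarrow (\ref{thm:1001_03})$ — confirming that a proper inclusion of compact open subsets of $\mathscr{G}^{(0)}$ produces a proper inclusion of left ideals of the corner $R\mathscr{G}{\restriction}_U$ via the support bookkeeping above — together with the routine care of keeping the left/right and noetherian/artinian variants aligned with the cited theorems. Everything else is an assembly of Theorems \ref{thm:1010}, \ref{thm:1011}, \ref{thm:2019}, Lemmas \ref{thm:0001} and \ref{thm:0004}, and Connell's theorem.
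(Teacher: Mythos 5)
Your proposal is correct and follows essentially the same route as the paper: the same cycle of implications, the same citations for $(\ref{thm:1001_01})\Rightarrow(\ref{thm:1001_02})$, $(\ref{thm:1001_03})\Rightarrow(\ref{thm:1001_04})$, and $(\ref{thm:1001_04})\Rightarrow(\ref{thm:1001_01})$, and the same chain-of-corners argument via Proposition \ref{thm:0009} and Lemma \ref{thm:0001} for $(\ref{thm:1001_02})\Rightarrow(\ref{thm:1001_03})$, with your support-based strictness check being a valid variant of the paper's idempotent criterion $Ae\subseteq Af \iff ef=e$. The only quibble is cosmetic: $\chi_V R\mathscr{G}{\restriction}_U$ is a \emph{right} ideal rather than a left ideal, which is harmless here since the involution makes the two chain conditions coincide.
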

\begin{proof}
(1) implies (2):  Follows from Theorem \ref{thm:1011}.
\newline \newline
(2) implies (3):  Assume $R \mathscr{G}$ is locally noetherian.  Let $U \subseteq \mathscr{G}^{(0)}$ be an arbitrary basic compact open set, hence $\chi_U \in R \mathscr{G}$.  Let $\{ U_i \}_{i \in I}$ be a chain of compact open sets such that each $U_i \subseteq U$ where $U_1 \subseteq U_2 \subseteq U_3 \hdots \subseteq U_n \subseteq \hdots $.  Recall that if $A$ is a ring and $e, f \in A$ are idempotents, then $Ae \subseteq Af$ if and only if $ef = e$.  Then each $\chi_U R \mathscr{G} \chi_U \chi_{U_i} = \chi_U R \mathscr{G} \chi_{U_i}$ is a left ideal of $\chi_U R \mathscr{G} \chi_U$ and by assumption $\chi_U R \mathscr{G} \chi_U$ is noetherian.  Applying the above observation to $A = \chi_U R \mathscr{G} \chi_U$ and using Proposition \ref{thm:0009}, we obtain the ascending chaiin
\begin{equation} \label{eq:1011}
    \chi_U R \mathscr{G} \chi_{U_1} \subseteq \chi_U R \mathscr{G} \chi_{U_2} \subseteq \hdots \chi_U R \mathscr{G} \chi_{U_n} \subseteq \hdots
\end{equation}
that must stabilize since $R \mathscr{G}$ is locally noetherian.  We also see that the chain $U_1 \subseteq U_2 \subseteq U_3 \hdots \subseteq U_n \subseteq \hdots $ must stabilize otherwise \eqref{eq:1011} would be a strictly ascending chain in the noetherian ring $\chi_U R \mathscr{G} \chi _U$ by Proposition \ref{thm:0009}.  So $U$ is Hausdorff with a basis of compact open sets satisfying the ascending chain condition and by Lemma \ref{thm:0001}, $U$ must be finite and therefore discrete.  This gives us that $\mathscr{G}^{(0)}$ and hence $\mathscr{G}$ is discrete, as $\mathscr{G}^{(0)}$ is Hausdorff with a basis of finite sets.
\newline \newline
Since $\mathscr{G}$ is discrete, for each $x \in \mathscr{G}^{(0)}$, $x$ is an isolated point and $\{ x \}$ is a compact open set, so $\chi_{ \{ x \}}$ is an idempotent and therefore by Proposition \ref{prop:1004}(\ref{prop:1004_3}) $R \mathscr{G}_x^x = R \mathscr{G}{\restriction_{ \{ x \} }} = \chi_{ \{ x \}} R \mathscr{G} \chi_{ \{ x \}}$ is noetherian, since $R \mathscr{G}$ is locally noetherian.
\newline \newline
As mentioned earlier locally artinian implies locally noetherian.  So the above proof shows $R \mathscr{G}$ locally artinian implies $\mathscr{G}$ is discrete and a similar argument shows that all of the isotropy group rings $R \mathscr{G}_{x_\alpha}^{x_\alpha}$ must be artinian.
\newline \newline  
(3) implies (4):  The direct sum decomposition $R \mathscr{G} \cong \bigoplus _{\alpha \in J} M_{\mathscr{O}_{\alpha}}(R \mathscr{G}_{x_\alpha}^{x_\alpha})$ is immediate from Lemma \ref{thm:0004}. The isotropy group rings $R \mathscr{G}_{x_\alpha}^{x_\alpha}$  are noetherian (artinian) by assumption.  A well-known result of Connell (Theorem \ref{thm:0010}) tells us that when the $R \mathscr{G}_{x_\alpha}^{x_\alpha}$ are artinian, the isotropy groups must be finite and $R$ must be artinian.
\newline \newline
(4) implies (1):  Since groupoid algebras have an involution, each $R \mathscr{G}_{x_\alpha}^{x_\alpha}$ is left noetherian (artinian) if and only if it is also right noetherian (artinian).  The result follows immediately from Theorem \ref{thm:1010}.  
\end{proof} 
\begin{Thm} \label{thm:1012} If $R$ is a commutative ring with unit and $\mathscr G$ is an ample groupoid then the following are equivalent:
\begin{enumerate}
  \item \label{thm:1012_01} $R \mathscr{G}$ is semisimple 
  \item \label{thm:1012_02} $R \mathscr{G}$ is locally semisimple.
  \item \label{thm:1012_03} $\mathscr{G}$ is discrete and $R \mathscr{G}_x^x$ is semisimple for all isotropy groups $\mathscr{G}_x^x$.
  \item \label{thm:1012_04} $R \mathscr{G} \cong \bigoplus _{\alpha \in J} M_{\mathscr{O}_{\alpha}}(R \mathscr{G} _{x_\alpha}^{x_\alpha})$ where $x_\alpha$, $\alpha \in J$, represent the orbits, $\mathscr{O}_{\alpha}$ is the orbit of $x_\alpha$, $\mathscr{G}_{x_\alpha}^{x_\alpha}$ are the isotropy groups at $x_\alpha$, and $R \mathscr{G}_{x_\alpha}^{x_\alpha}$ are semisimple. 
\end{enumerate}
\end{Thm}
\begin{proof}
(\ref{thm:1012_01}) implies (\ref{thm:1012_02}):  The result follows from Proposition \ref{prop:1006a}.
\newline \newline
(\ref{thm:1012_02}) implies (\ref{thm:1012_03}):  By assumption, every corner of $R \mathscr{G}$ is a unital semisimple ring and therefore artinian.  Hence $R \mathscr{G}$ is locally artinian and by Theorem \ref{thm:1001} $\mathscr{G}$ is discrete.  As in the proof of the Theorem \ref{thm:1001} \textit{(\ref{thm:1001_02}) implies (\ref{thm:1001_03})}, the discreteness of $\mathscr{G}$ and local semisimplicity of $R \mathscr{G}$ yields that for every $x \in \mathscr{G}^{(0)}$ $R \mathscr{G}_x^x = \chi_{\{ x \}} R \mathscr{G} \chi_{\{ x \}}$ is a corner in $R \mathscr{G}$ and hence semisimple.  
\newline \newline
(\ref{thm:1012_03}) implies (\ref{thm:1012_04}):  As in the proof of Theorem \ref{thm:1001}, the direct sum decomposition $R \mathscr{G} \cong \bigoplus _{\alpha \in J} M_{\mathscr{O}_{\alpha}}(R \mathscr{G}_{x_\alpha}^{x_\alpha})$ is immediate from Theorem \ref{thm:0004} and the isotropy groups are semisimple by assumption on $R \mathscr{G}_{x_\alpha}^{x_\alpha}$. 
\newline \newline
(\ref{thm:1012_04}) implies (\ref{thm:1012_01}):  Follows immediately from Propositions \ref{thm:1014}  and \ref{thm:1013}.
\end{proof}
\section{Applications}
\subsection{Leavitt Path Algebras}
For an arbitrary directed graph $E$ there is an $R$ algebra $L_R(E)$, called the Leavitt path algebra. Many ring-theoretic properties of $L_R(E)$ are controlled by the graphical properties of $E$. \cite{groupoidapproachleavitt}.  In \cite{LeavittBook}, the authors described categorical and local chain conditions for Leavitt path algebras of row finite graphs, conditions under which they coincide and when these algebras are semisimple.  \cite{LeavittBook} also relates these conditions to the graphical properties of $E$.  We will show that the conditions in \cite{LeavittBook} coincide with topological conditions on an ample groupoid derived from the graph and recover these results as special cases of our main result, Theorem \ref{thm:1001}.
\newline \newline
Let $E = (E^0,E^1, s, r)$ be a (directed) graph with vertex set $E^0$ and edge set $E^1$. We use 
$$
s:E^1 \rightarrow E^0
$$
$$
e \mapsto s(e)
$$
$s(e)$ for the source of an edge $e$ and 
$$
r:E^1 \rightarrow E^0
$$
$$
e \mapsto r(e)
$$
$r(e)$ for the range, or target, of an edge, $e$. A vertex $v$ is called a \textit{sink} if $s^{-1}(v) = \emptyset$ and it is called an \textit{infinite emitter} if $|s^{-1}(v)|= \infty$. The length of a finite (directed) path $\alpha$ is denoted $|\alpha|$ \cite{SteinbergGroupoidAlgebra}, \cite{groupoidapproachleavitt}.
\newline
A finite path is a finite sequence of edges $\alpha = \alpha_1 \alpha_2 \cdots \alpha_n$ such that $r(\alpha_i) = s(\alpha_{i+1})$ for all $i = 1, \cdots, n-1$. An infinite path is an infinite sequence of edges $\alpha = \alpha_1 \alpha_2 \cdots \alpha_n \alpha_{n+1} \cdots $ such that $r(\alpha_i) = s(\alpha_{i+1})$ for all $i$ in some total preordered index set $I$.   If $\alpha$ is a finite path, $r(\alpha) = r(\alpha_{|\alpha|})$ and if $\alpha$ is a finite or infinite path, $s(\alpha) = s(\alpha_1)$.  Let $\alpha$ be a finite path of positive length such that $r(\alpha) = s(\alpha) = v$ then we say that $\alpha$ is a closed path based at $v$.  A closed path $\alpha= \alpha_1 \cdots \alpha_{|\alpha|}$ with the property that none of the vertices $s(\alpha_1), \cdots, s(\alpha_{|\alpha|})$ are repeated is called a \textit{cycle}, and a graph without cycles is called \textit{acyclic}.  Let $I$ be a total preordered set used to index the edges in a path $\alpha$, an exit for a path $\alpha$ is an edge $f \in E^1$ with $s(f) = s(\alpha_i)$ for some  $i \in I$, and $f \ne \alpha_i$.  \cite{groupoidapproachleavitt}  Although we have extended the source $s$ and range maps $r$ to paths, $r^{-1}(v), s^{-1}(v) \subseteq E^1$ for all $v \in E^0$.
\begin{Def}
A \textit{ray} is an infinite path with no repeated vertex.
\end{Def}
Let $E^{\ast}$ be the set of all finite paths (including vertices) in $E$ and $E^{\infty}$ be the set of all infinite paths in $E$.  We define the path space of the graph $E$ as the set of all finite and infinite paths with an explicit basis of open sets for the topology\cite{groupoidapproachleavitt}.  We call $\partial E$ the \textit{boundary path of $E$}, where $\partial E$ is the set of all infinite paths and paths ending in a sink or an infinite emitter. 
\newline
The boundary path groupoid of the graph $E$ is defined as follows:
\begin{itemize}
    \item $\mathscr{G}_E = \{ (\alpha \gamma , |\alpha|-|\beta| , \beta \gamma ) \in \partial E \times \mathbb{Z} \times \partial E \mid |\alpha|, |\beta| < \infty \}$
    \item $\mathscr{G}_E^{(0)} =\{ (\gamma , 0 , \gamma ) \in \partial E \times \mathbb{Z} \times \partial E \mid |\alpha|, |\beta| < \infty \}$ and we identify $\mathscr{G}_E^{(0)}$ with $\partial E$ in the obvious way.
\end{itemize}
$d(\xi,k,\eta) = \eta, r(\xi,k,\eta) = \xi$ and $(\xi, k, \eta)(\eta, m, \zeta) = (\xi, k+m, \zeta)$.  Note that $\mathscr{G}_E$ is an ample groupoid with the topology given explicitly by a basis of compact open sets \cite{groupoidapproachleavitt}.  We will only need to work with the topology on $\mathscr{G}_E^{(0)}$ to show that it is discrete.  For this reason we will describe and work with the basis sets for $\partial E$ which corresponds to the topology on $\mathscr{G}^0_E$ under identification with $\partial E$.
\newline
Let $\alpha \in E^{\ast}$ be a finite path, we define the cylinder set
\begin{equation} \label{eq:2013}
    C(\alpha) = \{ \alpha x | x \in E^{\ast} \cup E^{\infty}, r(\alpha) = s(\alpha) \} \subseteq E^{\ast} \cup E^{\infty}
\end{equation}
\begin{Rmk} \label{rmk:2001}
Suppose $x$ and $y$ are finite paths in $E$, we observe that $C(y) \subseteq C(x)$, whenever $x$ is a subpath of $y$.
\end{Rmk}
\begin{equation} \label{eq:2009}
    C({\alpha}) \cap C({\beta})= 
\begin{cases}
    C({\beta}),& \text{if } \alpha \text{ is an initial subpath of } \beta \\
    C({\alpha}),& \text{if } \beta \text{ is an initial subpath of } \alpha \\
    \emptyset  ,& \text{otherwise}
\end{cases}
\end{equation}
Let $\varepsilon_v$ be the empty path starting at $v \in E^0$, then $\mathscr{G}_E^{(0)} = \partial E = \bigcup_{v \in E^0} C(\varepsilon_v) $.  This along with \ref{eq:2009} yields that the $C(\alpha)$ form a basis for the topology on $\partial E = \mathscr{G}_E^{(0)}$.  But this topology is not Hausdorff, since we cannot necessarily separate a finite path $\alpha$ ending at an infinite emitter from an infinite path with initial subpath $\alpha$.
\newline
This leads us to \textit{generalized cylinder sets} $C(\alpha, F)$ which are a basis for a Hausdorff topology for $\mathscr{G}_E^{(0)} = \partial E$ \cite[Section 2.2]{groupoidapproachleavitt}.  Let $\alpha \in E^{\ast}$, then $r(\alpha)E^1$ is the set of edges starting at $r(\alpha)$ and $F \subseteq r(\alpha)E^1$ is a finite subset of $r(\alpha)E^1$.
\begin{equation}
    C(\alpha, F) = C(\alpha) \setminus \bigcup_{e \in F} C(\alpha e)
\end{equation}
Note that when $F = \emptyset$, $C(\alpha, F) = C(\alpha)$, so the topology generated by the $C(\alpha, F)$ is a finer Hausdorff topology on $\mathscr{G}_E^{(0)} = \partial E$ than the topology generated by the $C(\alpha)$ \cite[Section 2.2, Lemma 2.3]{groupoidapproachleavitt}.  We also observe that the $C(\alpha)$ and $C(\alpha, F)$ both form compact open bases for $\partial E$ \cite[Section 2.2, Theorem 2.4 and Lemma 2.6]{groupoidapproachleavitt}.  So, equipped with the topology generated by the generalized cylinder sets, $ \mathscr{G}_E$ is a locally compact Hausdorff groupoid.
\newline 
If the graph, $E$, does not contain infinite emitters then the $C(\alpha)$ form a basis for the same Hausdorff topology on $\partial E$ as the $C(\alpha, F)$ generalized cylinder sets, allowing us to ignore the $C(\alpha, F)$.
\newline
A boundary path is \textit{eventually periodic} if it is of the form $p = \alpha \epsilon \epsilon \cdots \in E^{\infty}$ \cite{groupoidapproachleavitt}.
\begin{Prop} [Proposition 2.12 of \cite{groupoidapproachleavitt}] \label{thm:2007}If $E$ is a graph and $p \in \partial E$, then the isotropy group at $p$ is:
\begin{enumerate}
	\item infinite cyclic if $p$ is eventually periodic
	\item trivial if $p$ is not eventually periodic
\end{enumerate}
\end{Prop}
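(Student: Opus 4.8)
The plan is to realize the isotropy group $(\mathscr{G}_E)_p^p$ as a subgroup of $(\mathbb{Z},+)$ and then determine exactly when it is nontrivial. An element of $(\mathscr{G}_E)_p^p$ has the form $(p,k,p)$ with $k\in\mathbb{Z}$, and since the partial multiplication on $\mathscr{G}_E$ adds middle coordinates, namely $(\xi,k,\eta)(\eta,m,\zeta)=(\xi,k+m,\zeta)$, the assignment $(p,k,p)\mapsto k$ is an injective group homomorphism from $(\mathscr{G}_E)_p^p$ into $\mathbb{Z}$ (injective because an arrow with source and range $p$ is determined by its middle coordinate). Its image is a subgroup of $\mathbb{Z}$, hence either $\{0\}$ or infinite cyclic, so the whole statement reduces to showing that $(\mathscr{G}_E)_p^p$ contains some $(p,k,p)$ with $k\ne 0$ if and only if $p$ is eventually periodic.

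For the ``only if'' direction I would unwind the definition of $\mathscr{G}_E$. Suppose $(p,k,p)\in\mathscr{G}_E$ with $k\ne 0$; replacing this element by its inverse $(p,-k,p)$ if necessary (which is again in $(\mathscr{G}_E)_p^p$, as $\mathscr{G}_E$ is a groupoid), assume $k>0$. Then there are finite paths $\alpha,\beta$ with $|\alpha|-|\beta|=k$ and a path $\gamma\in\partial E$ with $p=\alpha\gamma=\beta\gamma$. Comparing the first $|\beta|$ edges of $p$ shows that $\beta$ is an initial subpath of $\alpha$, so $\alpha=\beta\delta$ with $|\delta|=k\ge 1$; cancelling the common prefix $\beta$ from $\beta\delta\gamma=\beta\gamma$ yields $\delta\gamma=\gamma$. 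Iterating, $\gamma=\delta^{n}\gamma$ for all $n\ge 1$, which is impossible for a finite $\gamma$ since $|\delta|\ge 1$; hence $\gamma$ is the infinite path $\delta\delta\delta\cdots$, and $\delta$ is a closed path based at $s(\delta)=r(\beta)$. Therefore $p=\beta\delta\delta\delta\cdots$ is eventually periodic.

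For the ``if'' direction, write an eventually periodic $p$ as $p=\alpha\epsilon\epsilon\cdots\in E^{\infty}$ with $\epsilon$ a closed path of length $\ge 1$; then, setting $\gamma=\epsilon\epsilon\cdots\in E^{\infty}\subseteq\partial E$, the triple $(p,|\epsilon|,p)=\big((\alpha\epsilon)\gamma,\ |\alpha\epsilon|-|\alpha|,\ \alpha\gamma\big)$ lies in $\mathscr{G}_E$ and has nonzero middle coordinate $|\epsilon|$. Combining the two directions, $(\mathscr{G}_E)_p^p$ is nontrivial precisely when $p$ is eventually periodic, and in that case it is infinite cyclic by the first paragraph, which is (1); when $p$ is not eventually periodic — in particular when $p$ is a finite boundary path ending at a sink or an infinite emitter, which is not of the form $\alpha\epsilon\epsilon\cdots\in E^\infty$ — the only element is the unit $(p,0,p)$, which is (2). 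I do not anticipate a genuine obstacle here; the only point requiring a little care is justifying the prefix cancellations, i.e.\ that equality of boundary paths really means edge-by-edge equality of the underlying sequences, and checking that the extracted $\delta$ is indeed a closed path in $E$ with $s(\delta)=r(\delta)$ so that $\delta\delta\delta\cdots$ is a legitimate element of $E^{\infty}$.
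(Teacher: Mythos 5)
The paper does not prove this proposition itself; it imports it by citation as Proposition 2.12 of \cite{groupoidapproachleavitt}. Your argument is correct and is essentially the standard proof of that result: the middle-coordinate map embeds $(\mathscr{G}_E)_p^p$ into $(\mathbb{Z},+)$, so the group is trivial or infinite cyclic, and the prefix-cancellation argument (giving $\delta\gamma=\gamma$, hence $\gamma=\delta^\infty$ with $\delta$ a closed path) correctly identifies nontriviality with eventual periodicity, with the finite-$\gamma$ and sink/infinite-emitter cases handled properly. Nothing further is needed.
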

Note that the group ring of an infinite cyclic group for the coefficient ring $R$ is $R[x,x^{-1}]$.
\begin{Cor} \label{thm:2010}
Let $E$ be a graph and $p \in \partial E = R \mathscr{G}^{(0)}_E$
\begin{enumerate}
    \item $R (\mathscr{G}_E)_p^p$ is noetherian if and only if $R$ is noetherian.
    \item $R (\mathscr{G}_E)_p^p$ is artinian if and only if $R$ is artinian and p is not eventually periodic. 
\end{enumerate}
\end{Cor}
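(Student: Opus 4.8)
The plan is to reduce both statements to Proposition \ref{thm:2007}, which identifies the isotropy group $(\mathscr{G}_E)_p^p$ as either trivial or infinite cyclic, and then to quote the classical group-ring theorems collected in Section 3 (Connell's Theorems \ref{thm:0010} and \ref{thm:0011}, Hilbert's Basis Theorem \ref{thm:2011}, and Hall's Corollary \ref{thm:0013}).

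First I would dispose of the case where $p$ is \emph{not} eventually periodic. Here Proposition \ref{thm:2007} gives that $(\mathscr{G}_E)_p^p$ is trivial, so $R(\mathscr{G}_E)_p^p \cong R$; both (1) and (2) are then immediate, the condition ``$p$ is not eventually periodic'' in (2) being automatically met.

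The substantive case is $p$ eventually periodic, where $(\mathscr{G}_E)_p^p \cong \mathbb{Z}$ and hence $R(\mathscr{G}_E)_p^p \cong R[x,x^{-1}]$, the group ring of $\mathbb{Z}$. For (1): if $R[x,x^{-1}] = R\mathbb{Z}$ is noetherian, Connell's Theorem \ref{thm:0011} forces $R$ noetherian; conversely, if $R$ is noetherian, then since $\mathbb{Z}$ is polycyclic-by-finite, Hall's Corollary \ref{thm:0013} gives that $R\mathbb{Z} = R[x,x^{-1}]$ is noetherian. (Equivalently, $R[x,x^{-1}]$ is a localization of the noetherian ring $R[x]$, noetherian by Theorem \ref{thm:2011}, and localizations of noetherian rings are noetherian.) For (2): since $\mathbb{Z}$ is infinite, Connell's Theorem \ref{thm:0010} shows $R\mathbb{Z}$ is never artinian, so $R(\mathscr{G}_E)_p^p$ cannot be artinian when $p$ is eventually periodic, which matches the claimed characterization.

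Finally I would assemble the two cases into statement (2): $R(\mathscr{G}_E)_p^p$ is artinian precisely when we are in the non-eventually-periodic case \emph{and} $R$ is artinian, i.e. precisely when $R$ is artinian and $p$ is not eventually periodic. I do not anticipate a genuine obstacle; the only points needing a little care are the identification $R\mathbb{Z} \cong R[x,x^{-1}]$ and the observation that the cited group-ring results, though stated for possibly noncommutative coefficient rings, apply verbatim to our commutative unital $R$.
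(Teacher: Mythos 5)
Your proposal is correct and follows essentially the same route as the paper: the paper's proof simply cites Proposition \ref{thm:2007} to identify the isotropy group as trivial or infinite cyclic, then invokes Connell's Theorems \ref{thm:0010} and \ref{thm:0011} together with Hilbert's Basis Theorem \ref{thm:2011}, exactly as you do (your localization/Hall argument for the noetherian direction is the intended content of the Hilbert's Basis Theorem citation). You have merely written out the case analysis that the paper leaves implicit.
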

\begin{proof}
(1) follows immediately from Proposition \ref{thm:2007}, \textit{Connell's} Theorem \ref{thm:0011}, and Hilbert's basis Theorem \ref{thm:2011}.
\newline
(2) follows immediately from Proposition \ref{thm:2007} and \textit{Connell's} Theorem \ref{thm:0010}.
\end{proof}
\begin{Lemma} \label{thm:2013}
Let $E$ be a graph.  Then $\partial E$ will not contain any eventually periodic paths if and only if $E$ is acyclic.
\end{Lemma}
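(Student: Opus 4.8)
The plan is to prove the contrapositive of each implication, so that the statement to establish becomes: $\partial E$ contains an eventually periodic path if and only if $E$ contains a cycle. Recall that, by the definition given above, an eventually periodic path is an \emph{infinite} path of the form $p = \alpha\epsilon\epsilon\cdots \in E^{\infty}$; matching consecutive edges forces $r(\epsilon) = s(\epsilon)$, so $\epsilon$ is a closed path, and it must have positive length, since otherwise $p = \alpha$ would be finite.

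First I would handle the direction ``$E$ not acyclic $\Rightarrow$ $\partial E$ has an eventually periodic path''. Starting from a cycle $c = c_1\cdots c_n$ based at a vertex $v$ (so $s(c_1) = r(c_n) = v$ and $n \ge 1$), the equality $r(c) = s(c) = v$ makes the infinite concatenation $ccc\cdots$ a well-defined infinite path, hence an element of $E^{\infty} \subseteq \partial E$. Writing it as $\varepsilon_v\, c\, c\, c\cdots$ puts it in the form $\alpha\epsilon\epsilon\cdots$ with $\alpha = \varepsilon_v$ and $\epsilon = c$, so it is eventually periodic.

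For the converse, ``$\partial E$ has an eventually periodic path $\Rightarrow$ $E$ not acyclic'', I would take $p = \alpha\epsilon\epsilon\cdots \in E^{\infty}$ and concentrate on $\epsilon = \epsilon_1\cdots\epsilon_m$, a closed path of positive length $m$. It then suffices to show that every closed path of positive length contains a cycle, which I would prove by induction on $m$: inspect the source vertices $s(\epsilon_1), \ldots, s(\epsilon_m)$; if they are pairwise distinct then $\epsilon$ is itself a cycle (in particular this covers the base case $m = 1$, a loop edge); otherwise choose $1 \le i < j \le m$ with $s(\epsilon_i) = s(\epsilon_j)$ and note that $\epsilon_i\epsilon_{i+1}\cdots\epsilon_{j-1}$ is a closed path of length $j - i$ with $1 \le j - i < m$, to which the inductive hypothesis applies. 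Either way $E$ contains a cycle, hence is not acyclic.

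The argument is entirely elementary; the only step that needs any care is the extraction of a cycle from an arbitrary closed path, and that is the familiar ``first repetition among the source vertices'' argument, so I do not expect any real obstacle here.
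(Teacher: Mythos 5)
Your proposal is correct and follows essentially the same route as the paper: both directions are handled by contrapositive, a cycle yields the eventually periodic path $cc\cdots$, and an eventually periodic path yields a closed path of positive length from which a cycle is extracted. The only difference is that you spell out the ``first repeated source vertex'' induction, which the paper leaves as the bare assertion that a closed path must contain a cycle.
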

\begin{proof}
($\Rightarrow$)  Suppose $E$ has a cycle $\gamma$, then the path $\gamma \gamma \cdots \gamma \cdots$ is an eventually periodic path in $\partial E$.
\newline
($\Leftarrow$) Suppose $p = \alpha \gamma \gamma \cdots \gamma \cdots$ is an eventually periodic path in $\partial E$, then $\gamma$ is a closed path and must have a subpath that is a cycle.
\end{proof}
The following is a well known result and an immediate corollary to \cite[Lemma 2.7]{groupoidapproachleavitt}.
\begin{Cor} \label{thm:2001}
Suppose $\alpha$ is any finite path in $E$ then there exists an element of the boundary path space with $\alpha$ as a prefix, equivalently $C(\alpha) \ne \emptyset$.  
\end{Cor}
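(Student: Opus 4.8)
The plan is to build, for a given finite path $\alpha$, an explicit element of $\partial E$ having $\alpha$ as a prefix; this immediately gives $C(\alpha)\ne\emptyset$, since such an element lies in $C(\alpha)$. The construction is the obvious greedy one: start at the terminal vertex of $\alpha$ and keep appending edges until, if ever, forced to stop. In fact the cleanest route is simply to invoke \cite[Lemma 2.7]{groupoidapproachleavitt}, which already records that every finite path extends to a boundary path, and then to observe that a boundary path extending $\alpha$ is by definition an element of $C(\alpha)$; but I will also spell out the self-contained argument below.

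First I would set $v_0 := r(\alpha)$. If $v_0$ is a sink or an infinite emitter, then $\alpha$ is itself a boundary path, so $\alpha \in C(\alpha)\cap\partial E$ and we are done. Otherwise $0 < |s^{-1}(v_0)| < \infty$, so I may choose an edge $e_1 \in s^{-1}(v_0)$ and form the finite path $\alpha e_1$, which still has $\alpha$ as a prefix; set $v_1 := r(e_1)$ and repeat. Continuing inductively, either the process halts at some step $n$ with $v_n$ a sink or an infinite emitter, in which case $\alpha e_1\cdots e_n \in \partial E$, or it runs forever and produces an infinite path $\alpha e_1 e_2\cdots \in E^{\infty}\subseteq \partial E$. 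In every case the resulting path lies in $\partial E$ and has $\alpha$ as a prefix, so $C(\alpha)\ne\emptyset$.

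I do not expect any real obstacle here: the only point requiring care is that the three kinds of boundary paths (infinite paths, finite paths ending at a sink, and finite paths ending at an infinite emitter) exhaust exactly the situations in which the greedy construction can terminate or fail to terminate, so that whatever happens the path produced is admissible. The statement is genuinely a corollary of the cited lemma, and the explicit argument above is included only for completeness.
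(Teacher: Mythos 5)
Your proposal is correct and matches the paper, which gives no proof at all and simply records the statement as an immediate consequence of \cite[Lemma 2.7]{groupoidapproachleavitt} --- exactly the citation you lead with. Your supplementary greedy construction (extend from $r(\alpha)$ edge by edge, halting only at a sink or infinite emitter, both of which land in $\partial E$) is a sound and complete self-contained argument, so it is a fine bonus but not needed.
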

\begin{Lemma} \label{thm:2002}
Let $E = (E^0,E^1, s, r)$ be a directed graph, if the boundary path space $\partial E$ is discrete then $E$ does not have an infinite emitter.
\end{Lemma}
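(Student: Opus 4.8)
The plan is to argue by contraposition: assuming $E$ has an infinite emitter $v$, I will show that the empty path $\varepsilon_v$ at $v$ is not an isolated point of $\partial E$, so that $\partial E$ fails to be discrete. First note that $\varepsilon_v$ genuinely lies in $\partial E$: it is a finite path ending at the infinite emitter $v$, so it belongs to the boundary path space by definition. Throughout I work with the Hausdorff topology on $\partial E$ generated by the generalized cylinder sets $C(\alpha,F)$, since that is the topology under which $\mathscr{G}_E$ is ample.

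Next I would pin down the basic open neighborhoods of $\varepsilon_v$. Since $\varepsilon_v$ has length $0$, the only finite path having $\varepsilon_v$ as a prefix is $\varepsilon_v$ itself, so $\varepsilon_v \in C(\alpha,F)$ forces $\alpha = \varepsilon_v$; and then $F$ may be any finite subset of $r(\varepsilon_v)E^1 = s^{-1}(v)$, with $\varepsilon_v \notin \bigcup_{e\in F} C(\varepsilon_v e) = \bigcup_{e\in F} C(e)$ automatic. Hence a neighborhood basis at $\varepsilon_v$ is given by the sets $C(\varepsilon_v,F) = C(\varepsilon_v)\setminus \bigcup_{e\in F} C(e)$ as $F$ ranges over the finite subsets of $s^{-1}(v)$.

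Now fix such an $F$. Because $|s^{-1}(v)| = \infty$ while $F$ is finite, there is an edge $e_0 \in s^{-1}(v)\setminus F$. By Corollary \ref{thm:2001} the cylinder $C(e_0)$ meets $\partial E$, so there is a boundary path $x$ having $e_0$ as a prefix. This $x$ has source $v$, so $x \in C(\varepsilon_v)$; it contains the edge $e_0$, hence has positive length, so $x \neq \varepsilon_v$; and by the intersection formula \eqref{eq:2009}, $C(e_0)\cap C(e) = \emptyset$ for every $e\in F$, since $e_0 \neq e$ and neither edge (both of length $1$) is an initial subpath of the other, whence $x \notin \bigcup_{e\in F} C(e)$. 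Therefore $x \in C(\varepsilon_v,F)\setminus\{\varepsilon_v\}$. As $F$ was arbitrary, every basic open neighborhood of $\varepsilon_v$ contains a point distinct from $\varepsilon_v$, so $\{\varepsilon_v\}$ is not open and $\partial E$ is not discrete.

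I expect no real obstacle here: the argument is entirely elementary. The only point that needs care is the bookkeeping in the second step, where one must correctly recognize that the basic neighborhoods of the length-zero path $\varepsilon_v$ are exactly the $C(\varepsilon_v,F)$ with $F\subseteq s^{-1}(v)$ finite, and that one is using the finer Hausdorff (generalized-cylinder) topology rather than the coarser topology generated by the plain cylinder sets $C(\alpha)$.
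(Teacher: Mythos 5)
Your proof is correct and follows essentially the same route as the paper's: both argue by contraposition from an infinite emitter $v$, using Corollary \ref{thm:2001} and the intersection formula \eqref{eq:2009} to produce boundary paths through infinitely many distinct edges in $s^{-1}(v)$. The only (minor) difference is in the last step: you exhibit $\varepsilon_v$ directly as a non-isolated point, whereas the paper concludes by observing that the basic set $C(\varepsilon_v, e_k)$ is an infinite compact set, which cannot exist in a discrete space.
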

\begin{proof}
Suppose $E$ has an infinite emitter at the vertex $v \in E^0$.  Then there will certainly be an empty path $\varepsilon_v$ starting at $v$.  We will choose infinitely many boundary paths emanating from $v$ as $\gamma_i = e_i x_i \in \partial E$ where $e_i \in E^{1}, x_i \in \partial E$, $\{ e_i \}_{i \in I} \subseteq s^{-1}(v)$ for an infinite index set $I$ and $e_i \ne e_j$ for $i \ne j$ in $I$.  Now fix $k \in I$, then $\{ e_i x_i \}_{i \in I \setminus \{ k \}} \subseteq C(\varepsilon_v, e_k)$.  Therefore, by \eqref{eq:2009} and  Corollary \ref{thm:2001}, $C(\varepsilon_v, e_k)$ is an infinite compact set and the boundary path space cannot be discrete. 
\end{proof}
\begin{Thm} \label{thm:2003}
Let $E = (E^0,E^1, s, r)$ be a directed graph, then the boundary path space of $E$, $\partial E$, is discrete if and only if the 
\begin{enumerate}
    \item $E$ does not have infinite emitters: 
    \item every cycle in the graph has no exit; and
    \item every ray has only finitely many exits.
\end{enumerate}
\end{Thm}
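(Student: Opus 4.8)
The plan is to reduce discreteness of $\partial E$ to a pointwise condition: $\partial E$ is discrete if and only if every $p\in\partial E$ is isolated, i.e. $\{p\}$ is open. The first step is to dispose of condition (1): if $\partial E$ is discrete then $E$ has no infinite emitters by Lemma \ref{thm:2002}. Having (1) available, I would then invoke the fact recorded in the discussion of generalized cylinder sets that, in the absence of infinite emitters, the ordinary cylinders $C(\alpha)$ already form a basis for $\partial E$. From then on I only work with the $C(\alpha)$'s, so that ``$p$ is isolated'' becomes ``$C(\alpha)=\{p\}$ for some finite path $\alpha$,'' and such an $\alpha$ is necessarily a prefix of $p$. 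This is the structural backbone of both directions.

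For necessity of (2) and (3) I would argue by contraposition using one uniform device: given a point $p$ and a prefix $\alpha$ of $p$, produce a finite path $w$ which extends $\alpha$, still is a prefix-compatible path in $E$, but which ``turns off'' $p$; then Corollary \ref{thm:2001} provides a boundary path $q\in C(w)\subseteq C(\alpha)$ with $q\ne p$, so $C(\alpha)\ne\{p\}$ for every prefix $\alpha$ and $p$ is not isolated. Concretely, if a cycle $c$ based at a vertex has an exit $f$ at the vertex $s(c_i)$, I take $p=ccc\cdots\in\partial E$, choose $N$ with $N|c|>|\alpha|$, and set $w=c^{N}c_1\cdots c_{i-1}f$; it has $\alpha$ as a prefix and disagrees with $p$ in the edge $f$ versus $c_i$. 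The same idea handles (3): if a ray $\mu$ has infinitely many exits then (using (1), so that no single vertex emits infinitely many edges) there is an exit $f\ne\mu_j$ at $s(\mu_j)$ with $j$ past any given prefix length $m$, and $w=\mu_1\cdots\mu_{j-1}f$ extends $\mu_1\cdots\mu_m$ while diverging from $\mu$.

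For sufficiency, assume (1)--(3) and exhibit, for each $p\in\partial E$, a finite path $\alpha$ with $C(\alpha)=\{p\}$. By (1), the finite boundary paths all end at sinks, so $\partial E$ splits into: finite paths ending at a sink, rays, and infinite paths with a repeated vertex. If $p$ is finite it ends at a sink, so $C(p)=\{p\}$. If $p$ is a ray, then by (3) it has only finitely many exits, so past some prefix $\mu_1\cdots\mu_m$ every vertex $s(\mu_k)$ emits exactly the edge $\mu_k$; since a ray never meets a sink, the only boundary path in $C(\mu_1\cdots\mu_m)$ is $\mu$ itself. If $p$ is an infinite path with a repeated vertex, then by excising a closed segment between two occurrences of a vertex one extracts a genuine cycle $c$ occurring as a contiguous segment $p_{a'}\cdots p_{b'-1}$; by (2) this $c$ has no exit, which forces the one-edge-out property at every vertex of $c$ and hence $p=\beta\, ccc\cdots$ with $\beta=p_1\cdots p_{a'-1}$, so $C(\beta)=\{p\}$. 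This exhausts $\partial E$, giving discreteness.

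I expect the main obstacle to be the structural step in the sufficiency direction: proving that an infinite path with a repeated vertex in a graph whose cycles have no exits must be eventually periodic with a purely cyclic tail, and then pinning down the correct pre-periodic prefix $\beta$ so that $C(\beta)$ contains nothing but $p$. The rest is cylinder-set bookkeeping --- tracking prefixes, using Corollary \ref{thm:2001} to guarantee the extensions are nonempty, and keeping clear which basic open sets are in play --- which is routine once the reduction to ordinary cylinders via (1) has been made, but which must be done carefully, in particular to confirm the constructed $q$ genuinely differs from $p$ and genuinely lies in $C(\alpha)$.
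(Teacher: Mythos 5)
Your proposal is correct and follows essentially the same route as the paper: condition (1) via Lemma \ref{thm:2002}, necessity of (2) and (3) by appending an exit edge to a sufficiently long prefix and invoking Corollary \ref{thm:2001} to produce a second boundary path in every basic neighborhood, and sufficiency via the same three-way case split (paths ending in a sink, rays past their last exit, and paths absorbed into an exit-free cycle). The only cosmetic difference is that in the necessity direction you conclude directly that the point $p$ is not isolated, whereas the paper concludes that the compact set $C(\varepsilon_v)$ contains infinitely many disjoint nonempty cylinders and hence cannot be discrete; the underlying constructions are identical.
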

\begin{proof}
For a row finite graph $E$, if $\eta \in \partial E$, then either
\begin{enumerate} [label=(\roman*)]
    \item \label{case:0001} $\eta$ has a vertex that is visited more than once --- this means that $\eta$ has a subpath that is a cycle; and if $\eta$ has a subpath $\gamma$ that is a cycle, then 
    \begin{enumerate}[label=(\alph*)]
        \item \label{case:0001-01} $\gamma$ has an exit; or
        \item \label{case:0001-02} $\gamma$ has no exits.
    \end{enumerate}
    \item \label{case:0002} none of the vertices in $\eta$ is visited more than once
    \begin{enumerate}[label=(\alph*)]
        \item \label{case:0002-01} $\eta$ has infinitely many exits; or 
        \item \label{case:0002-02} $\eta$ has finitely many exits, possibly zero 
        (this case also includes the case of $\eta$ ending in a sink).   
    \end{enumerate}    
\end{enumerate}
($\Rightarrow$) \newline
(1) Follows from Lemma \ref{thm:2002}.
\newline
By Lemma \ref{thm:2002}, we know that $E$ cannot have infinite emitters in order for $\partial E$ and therefore, $\mathscr{G}_E$ to be discrete.  For the remainder of this discussion we will assume that none of the elements of $\partial E$ end in an infinite emitter.  So we need only concern ourselves with infinite paths and paths ending in a sink.
\newline
Let $\gamma$ be a cycle with an exit, then $\gamma = e_1 \cdots e_n$ where $n$ is finite and $e_i \in E^1$ for $1 \le i \le n$ and the $e_i$ are distinct.  We can assume without loss of generality that our exit is at the vertex $v = s(e_1)=r(e_n)$.  So $|s^{-1}(v)|>1$, let $f \in E^1 \cap s^{-1}(v)$ and $f \ne e_1$.  So $f$ is not an edge in $\gamma$ and is therefore an exit from $\gamma$.  Let $\varepsilon_v$ be the empty path starting at $v$, then $\bigcup_{k \in \mathbb{N}} C(\gamma^k f) \subseteq C(\varepsilon_v)$.  By Corollary \ref{thm:2001} we know that the $C(\gamma^k f)$ are non-empty for all $k \in \mathbb{N}$ and $C(\gamma^j f)$, $C(\gamma^k f)$ are obviously disjoint for $j, k \in \mathbb{N}$, $j \ne k$.  So $C(\varepsilon_v)$ is an infinite compact set and therefore $\partial E$ cannot be discrete.
\newline
Let $\eta \in \partial E$ be a path with no cycles and infinitely many exits from the $\eta$ and let $\{ \alpha_i \}_{i \in I}$ be a set  of initial subpaths of $\eta$ such that $r(\alpha_i)$ has an exit $f_i$ from $\eta$ and $f_i \ne f_j$ whenever $i \ne j$.  By \eqref{eq:2009} and Corollary \ref{thm:2001}, each $C(\alpha_i f_i)$ is non-empty and  $C( \alpha_i f_i) \cap C( \alpha_j f_j) = \emptyset$ for $i,j \in I$ and $i \ne j$.  Let $v = s(\eta)$, then $\bigcup_{i \in I} C(\alpha_i f_i) \subseteq C(\varepsilon_v)$ and $C(\varepsilon_v)$ must be an infinite compact set.  Hence $\partial E$ cannot be discrete.
\newline
($\Leftarrow$)  Suppose a directed graph $E$ satisfies (1), (2), and (3), then the only types of boundary paths in $E$ are paths entering a cycle with no exit, finite paths ending in a sink, and rays with finitely many exits.
\newline
Let $\eta \in \partial E$ repeat a vertex, so $\eta$ will enter a cycle $\gamma$ and never leave by (2).  So $\eta$ will be of the form $\alpha \gamma \gamma \cdots \gamma \cdots$  where $\alpha$ is some finite path.  Then $C(\alpha \gamma)$ will contain only one element, the infinite path $\eta$.
\newline
Suppose $\eta \in \partial E$ has no repeated vertices.  Then either $|\eta| < \infty$, therefore ends in a sink or $|\eta| = \infty$ and is therefore a ray.
\newline
Subcase a: Suppose $\eta$ is an path ending in a sink, then $\eta = e_1 \cdots e_n$ for some finite $n$, where the $e_i$ are edges and $s^{-1}(r(e_n)) = \emptyset$.  Then $C(\eta) = C(e_1 \cdots e_n) =\{ \eta \}$.
\newline
Subcase b:  Suppose that $\eta \in \partial E$ is a ray with finitely many exits from the path $\eta$, where $n$ is finite.  Let $\gamma$ be an initial subpath of $\eta$  such that $r(\gamma)$ is past the finitely many exits from $\eta$.  Then $C(\gamma) = \{ \eta \}$.
\end{proof}
\begin{Rmk} \label{rmk:2002}
Observe that sinks do not affect whether $\partial E$ and $\mathscr{G}_E$ are discrete in Theorem \ref{thm:2003}.
\end{Rmk}
\begin{Cor} \label{thm:2016}
Let $R$ be a commutative ring with unit, $E$ a graph and $\mathscr{G}_E$ be the boundary path groupoid of $E$.  Then $R (\mathscr{G}_E)_p^p$ is artinian for all $p \in \partial E$ if and only if $R$ is artinian and $E$ is acyclic.
\end{Cor}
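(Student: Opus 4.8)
The plan is to deduce Corollary \ref{thm:2016} immediately from Corollary \ref{thm:2010}(2) together with Lemma \ref{thm:2013}; no new argument is really needed, only a correct handling of the quantifier over $p \in \partial E$.

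First I would record the pointwise statement. By Corollary \ref{thm:2010}(2), for a \emph{fixed} $p \in \partial E$ the isotropy group ring $R(\mathscr{G}_E)_p^p$ is artinian if and only if $R$ is artinian and $p$ is not eventually periodic. Quantifying over all $p$, this gives: $R(\mathscr{G}_E)_p^p$ is artinian for every $p \in \partial E$ if and only if $R$ is artinian and $\partial E$ contains no eventually periodic path. The one small point to check is that the ``$R$ artinian'' clause is not lost to a vacuous quantifier: by Corollary \ref{thm:2001} applied to the empty path $\varepsilon_v$ at any vertex $v \in E^0$ we have $C(\varepsilon_v) \neq \emptyset$, so $\partial E \neq \emptyset$ as soon as $E^0 \neq \emptyset$ (and the degenerate case $E^0 = \emptyset$ makes $R\mathscr{G}_E$ the zero ring, so the statement holds trivially, or may be excluded by convention).

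Second, I would apply Lemma \ref{thm:2013}, which states that $\partial E$ contains no eventually periodic path if and only if $E$ is acyclic. Substituting this equivalence into the statement of the previous paragraph yields precisely ``$R(\mathscr{G}_E)_p^p$ is artinian for all $p \in \partial E$ if and only if $R$ is artinian and $E$ is acyclic'', which is the assertion. Since both ingredients are already established, there is no genuine obstacle; the proof is essentially a two-line chain of equivalences, and the only care required is the bookkeeping of the universal quantifier over $p$ and the empty-graph edge case noted above.
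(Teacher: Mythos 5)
Your proposal is correct and follows essentially the same route as the paper: the paper's proof is exactly the two-step chain of equivalences via Corollary \ref{thm:2010} and Lemma \ref{thm:2013}. Your extra remark about the non-vacuity of the quantifier over $p \in \partial E$ is a small point the paper omits, but it does not change the argument.
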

\begin{proof}
By Connell’s Theorem \ref {thm:0010}, Corollary \ref{thm:2010}, and Lemma \ref{thm:2013}, $R (\mathscr{G}_E)_p^p$ is artinian for all $p \in \partial E$ $\Leftrightarrow$ $R$ is artinian and $p$ is not eventually periodic for all $p \in \partial E$ $\Leftrightarrow$ $R$ is artinian and $E$ is acyclic.
\end{proof}
\begin{Lemma} \label{thm:2017}
Let $E$ be a graph satisfying the condition that cycles have no exits.
\begin{enumerate}
	\item Then every closed path, $\gamma \in E^{\ast}$, is of the form $\varepsilon^k$, where $\varepsilon$ is a cycle and $k \in \mathbb{N}$
	\item For all $p \in \partial E$, $p$ is eventually periodic if and only if $p$ ends in a cycle.
\end{enumerate}
\begin{proof}
(1) Every closed path $\gamma$ is a finite path that must have a subpath that is a cycle $\varepsilon$.  Since $E$ satisfies the no exits condition for cycles, $s(\gamma) = r(\gamma) = s(\varepsilon) = r(\varepsilon)$
(2)	($\Leftarrow$)  Follows by definition.  ($\Rightarrow$) By definition $p = \alpha \gamma \gamma \cdots \gamma \cdots $ where $\gamma$ is a closed path.  The rest follows from (1).
\end{proof}
\end{Lemma}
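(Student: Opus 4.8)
The plan is to prove part (1) directly and then obtain part (2) as a short corollary.

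For part (1), I would start from the elementary fact (used already in the proof of Lemma~\ref{thm:2013} and in Theorem~\ref{thm:2003}) that a closed path $\gamma = \gamma_1 \cdots \gamma_n$ based at a vertex $v$ must revisit a vertex, hence contains a cycle as a subpath; write this subpath as $\varepsilon = \gamma_i \cdots \gamma_j$ with base vertex $w = s(\gamma_i) = r(\gamma_j)$. The engine of the argument is that the hypothesis ``cycles have no exits'' forces every vertex $u$ lying on $\varepsilon$ to satisfy $|s^{-1}(u)| = 1$; consequently, once a path reaches a vertex of $\varepsilon$ it is forced to follow $\varepsilon$ from then on. Applying this to $w$, the tail $\gamma_i \cdots \gamma_n$ is an initial segment of the infinite repetition $\varepsilon \varepsilon \cdots$, and since it terminates at $r(\gamma_n) = v$ we conclude that $v$ itself lies on $\varepsilon$. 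But then $v = s(\gamma_1)$ also emits a unique edge --- an edge of $\varepsilon$ --- so the \emph{whole} path $\gamma$ is forced to follow $\varepsilon$ starting from $v$. Replacing $\varepsilon$ by its cyclic rotation based at $v$ (a rotation of a cycle is again a cycle, since the multiset of source vertices is unchanged) we get $\gamma = \varepsilon^k$ with $k = n/|\varepsilon|$, which is (1).

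For part (2), the ($\Leftarrow$) direction is immediate: a cycle is in particular a closed path, so a path ending in a cycle is eventually periodic by definition. For ($\Rightarrow$), write an eventually periodic $p \in \partial E$ as $p = \alpha \epsilon \epsilon \cdots$ with $\epsilon$ a closed path; by part (1), $\epsilon = \varepsilon^k$ for some cycle $\varepsilon$, so $p = \alpha \varepsilon \varepsilon \varepsilon \cdots$, which says precisely that $p$ ends in the cycle $\varepsilon$.

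The only point requiring care is the bookkeeping in part (1): one must not stop at ``$\gamma$ contains a cycle'' but push on to show $\gamma$ is globally a power of a single cycle. This is exactly where the no-exit hypothesis gets used twice --- first to rigidify the tail after the first occurrence of a cycle, which forces $v$ onto that cycle, and then again to rigidify the initial segment $\gamma_1 \cdots \gamma_{i-1}$. Once this is in hand the rest is routine, and part (2) follows with no further work.
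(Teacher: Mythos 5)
Your proof is correct and follows essentially the same route as the paper's: extract a cycle $\varepsilon$ as a subpath of the closed path $\gamma$ and use the no-exit hypothesis (every vertex on a cycle emits exactly one edge) to force the rest of $\gamma$ to follow $\varepsilon$, with part (2) then falling out of part (1). Your write-up is in fact more complete than the paper's one-line sketch of part (1), which stops at the identification $s(\gamma)=r(\gamma)=s(\varepsilon)=r(\varepsilon)$ and leaves implicit the forcing argument and the rotation step that you spell out.
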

\begin{Rmk} \label{rmk:2003}
Under the identification of $\mathscr{G}^0_E$ with $\partial E$, an orbit $\mathcal{O} \subseteq \mathscr{G}^0_E$ is a collection of boundary paths in $\partial E$,  such that for any paths $p, q \in \mathcal{O}$, $p$ and $q$ will only differ by some finite initial subpath.  So we can write $p = ax$ and $q = bx$, where $a$ and $b$ are finite paths the $|a| - |b| \in \mathbb{Z}$ and $x \in \partial E$.
\end{Rmk}
We will use the well known fact that $L_R(E) = R \mathscr{G}_E$ and our prior results to show that our main Theorem \ref{thm:1001} recovers Theorems 4.2.7 and 4.2.12 of  \cite{LeavittBook}.
\begin{Thm} \label{thm:2014}  Let $E$ be a graph.  Let $R$ be a commutative ring with unit.  Then the following conditions are equivalent
\begin{enumerate}
    \item $L_R(E)$ \label{thm:2014_01} is categorically left noetherian.
    \item $L_R(E)$ \label{thm:2014_02} is categorically right noetherian.
    \item \label{thm:2014_03} $L_R(E)$ is locally left noetherian.
    \item \label{thm:2014_04} $L_R(E)$ is locally right noetherian.
    \item \label{thm:2014_05} $R$ is noetherian and 
    \begin{enumerate}
        \item $E$ does not have infinite emitters: 
        \item every cycle in the graph has no exit; and
        \item every ray has only finitely many exits.
    \end{enumerate}
    \item \label{thm:2014_06} $L_R(E) \cong \bigoplus _{i \in J_1} M_{\mathscr{O}_{p_i}}(R) \oplus \bigoplus _{j \in J_2} M_{\mathscr{O}_{p_j}}(R[x,x^{-1}])$ where the $p_i$ with $i \in J_1$ represent the orbits of non-eventually periodic paths and $p_j$ with $j \in J_2$ represent the orbits of eventually periodic paths and $R$ is noetherian.
\end{enumerate}
\end{Thm}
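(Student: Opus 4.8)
The plan is to deduce this entirely from the main result, Theorem~\ref{thm:1001}, applied to the boundary path groupoid $\mathscr{G}_E$, via the well-known isomorphism $L_R(E)\cong R\mathscr{G}_E$. First I would observe that since $R\mathscr{G}_E$ carries an involution it is isomorphic to its opposite ring, so by Remark~\ref{rmk:0008} the categorically left and categorically right noetherian conditions agree, as do the locally left and locally right noetherian conditions; this gives $(1)\Leftrightarrow(2)$ and $(3)\Leftrightarrow(4)$ and allows me to work with the unadorned conditions of Theorem~\ref{thm:1001}. Theorem~\ref{thm:1001} then yields $(1)\Leftrightarrow(3)$ at once (its items (1) and (2)), so it remains to establish $(1)\Leftrightarrow(5)$ and $(1)\Leftrightarrow(6)$.

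For $(1)\Leftrightarrow(5)$, Theorem~\ref{thm:1001} shows that $(1)$ holds iff $\mathscr{G}_E$ is discrete and $R(\mathscr{G}_E)_p^p$ is noetherian for every isotropy group. By Remark~\ref{rmk:0007} and the identification $\mathscr{G}_E^{(0)}=\partial E$, discreteness of $\mathscr{G}_E$ is the same as discreteness of $\partial E$, which by Theorem~\ref{thm:2003} is exactly conditions (a), (b), (c) of $(5)$; and by Corollary~\ref{thm:2010}(1), $R(\mathscr{G}_E)_p^p$ is noetherian for one (equivalently every) $p$ iff $R$ is noetherian. Together these recover condition $(5)$.

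For $(1)\Leftrightarrow(6)$, Theorem~\ref{thm:1001} gives that $(1)$ holds iff $R\mathscr{G}_E\cong\bigoplus_{\alpha\in J}M_{\mathscr{O}_\alpha}(R(\mathscr{G}_E)_{p_\alpha}^{p_\alpha})$ with each $R(\mathscr{G}_E)_{p_\alpha}^{p_\alpha}$ noetherian. By Remark~\ref{rmk:0001} the isotropy groups along an orbit are isomorphic, so by Proposition~\ref{thm:2007} being eventually periodic is an orbit invariant; hence $J=J_1\sqcup J_2$ with $J_1$ indexing orbits of non-eventually-periodic paths, whose isotropy is trivial so that $R(\mathscr{G}_E)_{p_i}^{p_i}\cong R$, and $J_2$ indexing orbits of eventually periodic paths, whose isotropy is infinite cyclic so that $R(\mathscr{G}_E)_{p_j}^{p_j}\cong R[x,x^{-1}]$. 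This puts the decomposition into the form of $(6)$, and by Corollary~\ref{thm:2010}(1) noetherianity of all these group rings is equivalent to $R$ being noetherian. Conversely, if the decomposition of $(6)$ holds with $R$ noetherian, then $R$ and (by Hilbert's Basis Theorem~\ref{thm:2011}) $R[x,x^{-1}]$ are unital noetherian rings, so Theorem~\ref{thm:1010} shows the direct sum is categorically noetherian, giving $(1)$.

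The only point needing care --- the rest being bookkeeping between the groupoid and graph pictures --- is verifying that \emph{eventually periodic} is constant on orbits so that $J$ genuinely splits as $J_1\sqcup J_2$; this follows because the isomorphism type of the isotropy group (trivial versus $\mathbb{Z}$) is an orbit invariant (Remark~\ref{rmk:0001}, Proposition~\ref{thm:2007}), or directly from the tail description of orbits in Remark~\ref{rmk:2003} together with Lemma~\ref{thm:2017}.
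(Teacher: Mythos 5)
Your proposal is correct and follows essentially the same route as the paper: reduce everything to Theorem~\ref{thm:1001} via $L_R(E)\cong R\mathscr{G}_E$, translate discreteness through Theorem~\ref{thm:2003} and the isotropy group rings through Proposition~\ref{thm:2007} and Corollary~\ref{thm:2010}, and split the orbit index set into $J_1\sqcup J_2$. Your explicit check that eventual periodicity is an orbit invariant is a point the paper leaves implicit, but it does not change the argument.
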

\begin{proof}
We will show the equivalence between each of Theorem \ref{thm:2014} conditions  (\ref{thm:2014_01})-(\ref{thm:2014_06}) and a condition in Theorem \ref{thm:1001} and the results follows immediately.  Since $L_R(E) = R \mathscr{G}_E$, Theorem \ref{thm:2014}(\ref{thm:2014_01}) and (\ref{thm:2014_02}) are obviously equivalent to Theorem \ref{thm:1001}(\ref{thm:1001_01}), and Theorem \ref{thm:2014}(\ref{thm:2014_03}) and (\ref{thm:2014_04}) are equivalent to Theorem \ref{thm:1001}(\ref{thm:1001_02}).
\newline 
All that remains is to show the translation between Theorem \ref{thm:2014}(\ref{thm:2014_05}) and (\ref{thm:2014_06}) to the appropriate language of etale groupoid algebras in Theorem \ref{thm:1001}.  When $L_R(E) = R \mathscr{G}_E$, Corollary \ref{thm:2010} and tells us that $R (\mathscr{G}_E)_p^p$ is always noetherian if and only if $R$ is noetherian.  By the preceding and our characterization of discreteness for $\mathscr{G}_E$ in Theorem 
\ref{thm:2003}, Theorem \ref{thm:1001}(\ref{thm:1001_03}) is equivalent to $R$ is noetherian and 
\begin{enumerate}[label=(\alph*)]
    \item $E$ does not have infinite emitters;
    \item every cycle in the graph has no exits; and
    \item every ray has only finitely many exits.
\end{enumerate}
As a result of Proposition \ref{thm:2007}, we can split the index set of orbits $J$ into a disjoint union $J_1$ and $J_2$, where $J_1$ and $J_2$ are defined above, and $R(\mathscr{G}_E)_{p_i}^{p_i} = R$ for $i \in J_1$ (i.e., when $p_i$ is not an eventually periodic path) and $R(\mathscr{G}_E)_{p_j}^{p_j} = R[x,x^{-1}]$ for $j \in J_2$ (i.e., when $p_j$ is an eventually periodic path). 
\newline
By the preceding and Corollary \ref{thm:2010}, Theorem \ref{thm:1001}(\ref{thm:1001_04}) yields $R$ is noetherian and 
\begin{equation} \label{eq:2014}
\begin{split}
L_R(E) \cong R \mathscr{G} & \cong  \bigoplus _{i \in J_1} M_{\mathscr{O}_{p_i}}(R (\mathcal{G}_E)_{p_i}^{p_i}) \oplus \bigoplus _{j \in J_2} M_{\mathscr{O}_{p_j}}(R (\mathcal{G}_E)_{p_j}^{p_j}) \\ & =\bigoplus _{i \in J_1} M_{\mathscr{O}_{p_i}}(R) \oplus \bigoplus _{j \in J_2} M_{\mathscr{O}_{p_j}}(R[x,x^{-1}]).
\end{split}
\end{equation}
\end{proof}
\begin{Thm} \label{thm:2015} Let $E$ be a graph and $R$ be a commutative ring with unit.  The following conditions are equivalent:
\begin{enumerate}
    \item \label{thm:2015_01} $L_R(E)$ is categorically left artinian.
    \item \label{thm:2015_02} $L_R(E)$ is categorically right artinian.
    \item \label{thm:2015_03} $L_R(E)$ is locally left artinian.
    \item \label{thm:2015_04} $L_R(E)$ is locally right artinian.
    \item \label{thm:2015_05} $R$ is artinian and
    \begin{enumerate}
        \item $E$ does not have infinite emitters: 
        \item every ray has only finitely many exits; and
        \item $E$ is acyclic 
    \end{enumerate}
    \item \label{thm:2015_06} $L_R(E) \cong \bigoplus _{\alpha \in J} M_{\mathscr{O}_{\alpha}}(R)$ where $J$ is the index set for the set of orbits and $R$ is artinian.
\end{enumerate}
\end{Thm}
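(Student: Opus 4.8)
The plan is to run the same argument as for Theorem \ref{thm:2014}, replacing ``noetherian'' by ``artinian'' everywhere and feeding into the artinian branch of the main theorem. Since $L_R(E) = R\mathscr{G}_E$, conditions (\ref{thm:2015_01}) and (\ref{thm:2015_02}) are, verbatim, Theorem \ref{thm:1001}(\ref{thm:1001_01}) in the artinian case, the distinction between left and right being absorbed by the involution on $R\mathscr{G}_E$ (Remark \ref{rmk:0008}); likewise conditions (\ref{thm:2015_03}) and (\ref{thm:2015_04}) are Theorem \ref{thm:1001}(\ref{thm:1001_02}). So the equivalence of (\ref{thm:2015_01})--(\ref{thm:2015_04}) is immediate from Theorem \ref{thm:1001}, and it only remains to translate (\ref{thm:2015_05}) and (\ref{thm:2015_06}) into the groupoid language of Theorem \ref{thm:1001}.

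First I would translate the artinian version of Theorem \ref{thm:1001}(\ref{thm:1001_03}): $\mathscr{G}_E$ is discrete and $R(\mathscr{G}_E)_p^p$ is artinian for every isotropy group. By Theorem \ref{thm:2003}, discreteness of $\mathscr{G}_E$ (equivalently of $\partial E$) is equivalent to $E$ having no infinite emitters, every cycle having no exit, and every ray having finitely many exits. By Corollary \ref{thm:2016}, $R(\mathscr{G}_E)_p^p$ is artinian for all $p \in \partial E$ if and only if $R$ is artinian and $E$ is acyclic. Now the observation that closes this step: when $E$ is acyclic the clause ``every cycle has no exit'' is vacuously true, so the conjunction of the discreteness conditions with acyclicity collapses to precisely the list in (\ref{thm:2015_05}) ($R$ artinian, no infinite emitters, every ray has finitely many exits, $E$ acyclic). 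Hence Theorem \ref{thm:1001}(\ref{thm:1001_03}) in the artinian case is equivalent to (\ref{thm:2015_05}).

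Next I would handle (\ref{thm:2015_06}) through the artinian version of Theorem \ref{thm:1001}(\ref{thm:1001_04}). Under the hypotheses of (\ref{thm:2015_05}) the graph $E$ is acyclic, so by Lemma \ref{thm:2013} there are no eventually periodic boundary paths, and then Proposition \ref{thm:2007} forces every isotropy group $(\mathscr{G}_E)_p^p$ to be trivial, i.e. $R(\mathscr{G}_E)_p^p = R$. Substituting this into the matrix decomposition of Theorem \ref{thm:1001}(\ref{thm:1001_04}) gives $L_R(E) = R\mathscr{G}_E \cong \bigoplus_{\alpha \in J} M_{\mathscr{O}_\alpha}(R)$ with $R$ artinian, which is exactly (\ref{thm:2015_06}); conversely, the final sentence of Theorem \ref{thm:1001}(\ref{thm:1001_04}) already records that in the artinian case $R$ is artinian (and the isotropy groups finite, here trivial), so (\ref{thm:2015_06}) feeds back into Theorem \ref{thm:1001}(\ref{thm:1001_04}). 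Chaining these equivalences through Theorem \ref{thm:1001} closes the loop.

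The only delicate point --- and the step I expect to be the main (and rather minor) obstacle --- is the bookkeeping in the second paragraph: one must check carefully that acyclicity genuinely renders the ``cycles have no exit'' hypothesis redundant and that Corollary \ref{thm:2016}'s acyclicity condition combines with, rather than adds to, the discreteness conditions of Theorem \ref{thm:2003}, so that (\ref{thm:2015_05}) and Theorem \ref{thm:1001}(\ref{thm:1001_03}) coincide exactly. Everything else is a direct substitution into results already established in the paper.
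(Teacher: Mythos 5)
Your proposal is correct and follows essentially the same route as the paper: both reduce (\ref{thm:2015_01})--(\ref{thm:2015_04}) directly to Theorem \ref{thm:1001}(\ref{thm:1001_01})--(\ref{thm:1001_02}), translate (\ref{thm:2015_05}) via Theorem \ref{thm:2003} together with Corollary \ref{thm:2016} (noting that acyclicity makes the ``cycles have no exit'' clause vacuous), and obtain (\ref{thm:2015_06}) from Proposition \ref{thm:2007} and Corollary \ref{thm:2010} by observing that all isotropy groups are trivial so each $R(\mathscr{G}_E)_p^p = R$. No substantive differences.
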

\begin{proof}
As before we will show the equivalence between each of Theorem \ref{thm:2015} conditions  (\ref{thm:2015_01})-(\ref{thm:2015_06}) and a condition in Theorem \ref{thm:1001} and the results follows.  Since $L_R(E) \cong R \mathscr{G}_E$, the equivalence of Theorem \ref{thm:2015}(\ref{thm:2015_01})-(\ref{thm:2015_04}) to Theorem \ref{thm:1001}(\ref{thm:1001_01}) and (\ref{thm:1001_02}) is clear.
\newline
We now show the translation between Theorem \ref{thm:2015}(\ref{thm:2015_05}) and (\ref{thm:2015_06}) to the appropriate language of etale groupoid algebras in Theorem \ref{thm:1001}.  Corollary \ref{thm:2010} and Lemma \ref{thm:2013} tell us that $R (\mathscr{G}_E)_p^p$ is artinian for all $p \in \partial E$ if and only if $R$ is artinian and $E$ is acyclic.  By the preceding and our characterization of discreteness for $\mathscr{G}_E$ in Theorem 
\ref{thm:2003}, Theorem \ref{thm:1001}(\ref{thm:1001_03}) is equivalent to $R$ is artinian and 
\begin{enumerate}[label=(\alph*)]
    \item $E$ does not have infinite emitters;
    \item every ray has only finitely many exits
    \item $E$ is acyclic.
\end{enumerate}
By Proposition \ref{thm:2007} and Corollary \ref{thm:2010}, 
$R(\mathscr{G}_E)_{p_i}^{p_i}$ is artinian for all $p_i \in \partial E$ if and only if $R$ is artinian and $p_i$ is not eventually periodic for all $p_i \in \partial E$  if and only if $R$ is artinian and $R(\mathscr{G}_E)_{p_i}^{p_i} = R$ for all $i \in J$. 
Therefore Theorem \ref{thm:1001}(\ref{thm:1001_04}) is equivalent to $R$ is artinian and 
\begin{equation}
L_R(E) \cong R \mathscr{G} =\bigoplus _{i \in J} M_{\mathscr{O}_i}(R).
\end{equation}
\end{proof}
\begin{Rmk} \label{rmk:2004}
Theorems 4.2.12 and 4.2.7 of \cite{LeavittBook} assume the coefficient ring $R$ is a field and we show in Theorems \ref{thm:2014} and \ref{thm:2015} that these results holds in the more general setting where $R$ is any unital commutative ring.
When comparing our generalization of the categorical and local chain conditions results in \cite{LeavittBook}, the phrase \textit{``infinite paths ending in a sink"} in \cite{LeavittBook} is the same as our phrase \textit{``ray with finitely many exits"} in the presence of cycles having no exits.  
\end{Rmk}
\begin{Thm} \label{thm:2021} Let $E$ be a graph and $R$ be a commutative ring with unit, then the following conditions are equivalent:
\begin{enumerate}
  \item \label{thm:2021_01} $L_R(E)$ is semisimple. 
  \item \label{thm:2021_02} $L_R(E)$ is locally semisimple.
  \item \label{thm:2021_03} $R$ is semisimple and
      \begin{enumerate}
        \item \label{thm:2021_03_a} $E$ does not have infinite emitters: 
        \item \label{thm:2021_03_b} every cycle in the graph has no exit; and
        \item \label{thm:2021_03_c} E is acyclic.
    \end{enumerate}
  \item \label{thm:2021_04} $L_R(E) \cong \bigoplus _{\alpha \in J} M_{\mathscr{O}_{\alpha}}(R)$ where $J$ is the index set for the set of orbits and $R$ is semisimple. 
\end{enumerate}
\end{Thm}
\begin{proof}
(\ref{thm:2021_01}) $\Leftrightarrow$ (\ref{thm:2021_02}) As in Theorem \ref{thm:2015} \ref{thm:2015_01}-\ref{thm:2015_06}, the equivalence of \ref{thm:2021} (\ref{thm:2021_01}) and (\ref{thm:2021_02}) follows from $L_R(E) \cong R \mathscr{G}_E$ and Theorem \ref{thm:1012}.
\newline
(\ref{thm:2021_02}) $\Rightarrow$ (\ref{thm:2021_03})  Follows from an appropriate translation of Theorem \ref{thm:1012} (\ref{thm:1012_02}) and (\ref{thm:1012_03}).  For any unital ring $R$ and group $G$, there is a surjective ring homomorphism $\varphi: RG \rightarrow R$ sending $rg \mapsto r$ for $r \in R$ and $g \in G$.  Any surjective ring homomorphism $\varphi$ from a semisimple ring forces the image of $\varphi$ to be semisimple, and therefore $R$ is semisimple.  By assumption, $L_R(E)$ is locally semisimple and, for unital rings, semisimple implies artinian; hence, locally semisimple implies locally artinian.  Therefore, $L_R(E)$ is locally semisimple and by Theorem \ref{thm:2015} we know that $R (\mathscr{G}_E)_p^p$ must be artinian for all $p \in \partial E$ and $E$ must satisfy the following
    \begin{enumerate}
        \item $E$ does not have infinite emitters: 
        \item every ray has only finitely many exits; and
        \item $E$ is acyclic 
    \end{enumerate}
(\ref{thm:2021_03}) $\Rightarrow$ (\ref{thm:2021_04}) follows from Theorem \ref{thm:2015} and the semisimplicity of $R$.
\newline
(\ref{thm:2021_04}) $\Rightarrow$ (\ref{thm:2021_01}) follows from Propositions \ref{thm:1014} and \ref{thm:1013}.
\end{proof}
\subsection{Future work}
This is a preliminary work, so we will post a forthcoming update, which will apply our results to higher rank graph algebras, specifically the Kumjian-Pask algebras, and inverse semigroup algebras.
\def\malce{\mathbin{\hbox{$\bigcirc$\rlap{\kern-7.75pt\raise0,50pt\hbox{${\tt m}$}}}}}\def\cprime{$'$} \def\cprime{$'$} \def\cprime{$'$} \def\cprime{$'$} \def\cprime{$'$} \def\cprime{$'$} \def\cprime{$'$} \def\cprime{$'$} \def\cprime{$'$} \def\cprime{$'$}

\end{document}